\documentclass[11pt]{article}\textwidth 160mm\textheight 235mm
\usepackage{amsfonts}
\usepackage{amssymb}
\usepackage{graphicx}
\usepackage{amsmath}
\usepackage{amsthm}
\usepackage{enumitem}
\usepackage{tikz}
\usepackage{cancel}
\usepackage{nicematrix}
\usepackage{todonotes}
\usepackage{showlabels}
\usetikzlibrary{matrix}
\usepackage{mathrsfs}
\usetikzlibrary{arrows,shapes}
\usepackage{cite}
\usepackage{hyperref}
\hypersetup{colorlinks=true, urlcolor=blue}
\expandafter\let\expandafter\oldproof\csname\string\proof\endcsname
\let\oldendproof\endproof
\renewenvironment{proof}[1][\proofname]{%
  \oldproof[\ttfamily \scshape \bf #1. ]%
}{\oldendproof}
\def\O{{\bf O}}

\def\S{{\bf {S}}}

\def\B{{\bf B}}
\def\R{{\bf {R}}}
\def\N{{\bf {N}}}
\def\ox{\bar{x}}
\def\oy{\bar{y}}

\def\ov{\bar{v}}
\def\ow{\bar{w}}

\def\ss{\scriptsize }
\def\diag{\mbox{\rm diag}\,}

\def\ve{\varepsilon}

\def\X{{\bf X}}
\def\P{{\bf P}}
\def\Y{{\bf Y}}

\def\tilde{\widetilde}
\def\dist{{\rm dist}}

\def\Lm{{\Lambda}}
\def\tto{\rightrightarrows}

\def\d{{\rm d}}
\def\l{{\ell}}
\def\sub{\partial}

\def\Hat{\widehat}

\def\Bar{\overline}
\def\ra{\rangle}
\def\la{\langle}
\def\ve{\varepsilon}

\def\ri{\mbox{\rm ri}\,}

\def\gph{\mbox{\rm gph}\,}
\def\epi{\mbox{\rm epi}\,}

\def\dom{\mbox{\rm dom}\,}

\def\diag{\mbox{\rm diag}\,}

\def\dn{\downarrow}

\def\ph{\varphi}

\def\oR{\Bar{\bf{R}}}

\def\lm{\lambda}

\def\dd{\delta}
\def\al{\alpha}

\def\th{\theta}

\def\sm{\hbox{${1\over 2}$}}

\def\sce{\setcounter{equation}{0}}
\def\Diag{\mbox{\rm Diag}\,}
\def\diag{\mbox{\rm diag}\,}
\def\prox{\mbox{\rm prox}}
\def \b{{\}_{k\in\N}}}

\DeclareMathOperator*{\argmin}{argmin}

\def\verl{ \;\rule[-0.4mm]{0.2mm}{0.27cm}\;}

\begin{document}
\vspace*{0.5in}
\begin{center}
{\Large\bf Twice Epi-Differentiability  of Spectral Functions and its applications}\\[1 ex]
CHAO DING\footnote{State Key Laboratory of Mathematical Sciences, Academy of Mathematics and Systems Science, Chinese Academy of Sciences, Beijing 100190, China; School of Mathematical Sciences, University of Chinese Academy of Sciences, Beijing 100049, China; Institute of Applied Mathematics, Academy of Mathematics and Systems Science, Chinese Academy of Sciences, Beijing 100190, China (dingchao@amss.ac.cn). Research of this author is supported in part by the National Key R\&D Program of China (No.~2021YFA1000300, No.~2021YFA1000301), National Natural Science Foundation of China (No.~12531014), and CAS Project for Young Scientists in Basic Research (No.~YSBR-034). }\quad
EBRAHIM SARABI\footnote{Department of Mathematics, Miami University, Oxford, OH 45065, USA (sarabim@miamioh.edu). Research of this author is partially supported by the U.S. National Science Foundation under the grant DMS 2108546.}\quad
SHIWEI WANG\footnote{Institute of Applied Mathematics, Academy of Mathematics and Systems Science, Chinese Academy of Sciences, Beijing,
China. This work is done while the author is a visiting scholar at the Institute of Operational Research and Analytics, National
University of Singapore, Singapore (wangshiwei@amss.ac.cn).}
\end{center}
\vspace*{0.05in}
\small{\bf Abstract.} Second-order variational properties have been shown to play important theoretical and numerical roles for different classes of optimization problems. Among such properties, twice epi-differentiability has a special place because of its ubiquitous presence in various classes of extended-real-valued functions that are important for optimization problems. We provide a useful characterization of this property for spectral functions by demonstrating that it can be characterized via the same property of the symmetric part of the spectral representation  of an eigenvalue function. Our approach allows us to bypass the rather restrictive convexity assumption, used in many recent works that targeted second-order variational properties of spectral functions.  By this theoretical tool, several applications on the proto-differentiability of subgradient mappings, the
directional differentiability of the proximal mapping of spectral functions are achieved. We finally use our established theory to study twice epi-differentiability of leading eigenvalue functions and practical regularization terms that have important applications in statistics and the robust PCA.   
\\[1ex]
{\bf Key words.} spectral functions,  twice epi-differentiability, proto-differentiability, generalized twice differentiable, leading eigenvalue \\[1ex]
{\bf  Mathematics Subject Classification (2000)}   49J52, 15A18, 49J53

\newtheorem{Theorem}{Theorem}[section]
\newtheorem{Proposition}[Theorem]{Proposition}
\newtheorem{Remark}[Theorem]{Remark}
\newtheorem{Lemma}[Theorem]{Lemma}
\newtheorem{Corollary}[Theorem]{Corollary}
\newtheorem{Definition}[Theorem]{Definition}
\newtheorem{Example}[Theorem]{Example}
\newtheorem{Algorithm}[Theorem]{Algorithm}
\renewcommand{\theequation}{{\thesection}.\arabic{equation}}
\renewcommand{\thefootnote}{\fnsymbol{footnote}}
\newcommand{\bigzero}{\mbox{\normalfont\Large\bfseries 0}}
\normalsize

\section{Introduction}\sce
In this paper, we study the second-order variational properties of orthogonally invariant functions (also called spectral functions \cite{F81}). A mapping 
$g:\S^n\to \oR:=[-\infty,\infty]$, is orthogonally invariant if
$$g(X)=g(U^\top XU),\;\mbox{for all}\;X\in\S^n\;\mbox{and for any}\; U\in\O^n,$$
where $\S^n$ stands for the real vector space of $n\times n$ symmetric matrices and $\O^n$ is the set of all real $n\times n$ orthogonal matrix. It is known (see \cite{l962}) that any spectral function admits the composite representation
\begin{equation}\label{spec}
g(X)=(\th\circ\lm)(X), \;\; X\in \S^n,
\end{equation}
where $\th:\R^n\to  \oR$ is a symmetric (permutation-invariant) function on $\R^n$, i.e., for all $x\in\R^n$ and any $n\times n$ permutation matrix $P$, $\theta(x)=\theta(Px)$,  where $\lm(X):=\big(\lm_1(X),\ldots,\lm_n(X)\big)$ denotes the vector of eigenvalues of $X$ arranged in nonincreasing order $\lm_1(X)\geq\ldots\geq\lm_n(X)$. Spectral functions are widely used in areas such as neural networks, signal processing, robust PCA, and graph theory, among others \cite{KLV97,PW18,GD14,EY17,dp}. They also cover many classical models, including semidefinite programming problems (SDP) \cite{ZST} and eigenvalue optimization \cite{OM}. Over the past three decades, extensive theoretical foundations have been developed for spectral functions; see, e.g., \cite{ls,l962,QY,dst1,dst2,d12}. 

While prior studies have delivered notable advances in first-order variational properties of spectral functions and algorithm design for particular classes of optimization problems such as CMatOP \cite{cd}, the existing second-order variational theory of spectral functions remains limited in light of rapid progress in machine learning and statistical optimization. 
To narrow this gap, we aim to present a comprehensive analysis of twice epi-differentiability of spectral functions that are not necessarily convex. 
Twice epi-differentiability, introduced in \cite{r90}, provides second-order approximations to the epigraphs of extended-real-valued functions. It also can be used to establish the second-order optimality conditions for different class of optimization problems \cite{hs}.  On the algorithmic side, recent results in \cite{mms1,ms,HS24s} highlight a fundamental  role that  twice epi-differentiability plays  for establishing convergence of numerical algorithms  including generalized Newton methods \cite{mors} and the augmented Lagrangian method \cite{hs24} that are widely used in large-scale optimization.  

Twice epi-differentiability has been explored for important classes of functions such as piecewise linear quadratic (cf. \cite[Proposition~13.9]{rw}), ${C}^2$ cone reducible convex sets (cf. \cite[Theorem~6.2]{mms1}), and more generally parabolically regular functions (cf. \cite[Theorem~3.8]{ms}).
These results were extended to some classes of eigenvalue functions by 
Torki  in \cite[Theorem 3.2]{t2}), where he showed for the first time that some eigenvalue functions enjoy this property as well. His result, however, 
fell short of providing a systematic approach to characterizing twice epi-differentiability of eigenvalue functions through the same property of their symmetric parts, meaning the function $\th$ in \eqref{spec},  a pattern which was pioneered by Davis in \cite{da} for convexity of spectral functions and was remarkably extended to various first-order variational properties by Lewis; see  \cite{l99}.  It is worth mentioning that a similar pattern was demonstrated to hold for some second-order variational properties  of  spectral functions such as twice differentiability (cf. \cite{ls}), prox-regularity (cf. \cite{dlms}), ${\cal C}^2$-cone reducibility (cf. \cite{cdz}),  and parabolic regularity (cf. \cite{ms2,HKS}). 

This paper continues the path, initiated recently in \cite{ms2}, to conduct second-order variational analysis of spectral functions.  We should add here that the latter work focused mainly on the characterization of parabolic regularity, which is a strictly stronger property than twice epi-differentiability,  of spectral functions. Moreover, convexity was assumed in  the main results of \cite{ms2}. Our results have two major differences with \cite{ms}. First and foremost, we study a different second-order variational property than the one in \cite{ms2}. Second, we improve some of the results therein that allow us to drop convexity in our developments in this paper. 
This opens the door to possibly remove convexity from the main assumptions in \cite{ms2}, which will be a subject for our future research. 
Below, we summarize our main contributions: 
\begin{itemize}
    \item We show that   twice epi-differentiability  of the spectral function $g$ in \eqref{spec} can be characterized  via the same property of its symmetric part $\theta$ even when $g$ is nonconvex. As a consequence of this characterization, we also calculate the second subderivative of spectral functions. A similar result is proven for   twice semidifferentiability of differentiable spectral functions. Note that the principal challenge in the nonconvex case is that subgradients $\partial\theta$ need not have components in nonincreasing order, which complicates compatibility with the ordered eigenvalue map.  These brand new results provide important insights in how to study second-order variational properties of nonconvex spectral functions. 
    
    \item Based on these inheritance results, the proto-differentiability
of subgradient mappings and the directional differentiability of the proximal mapping of spectral functions are characterized. Furthermore, we provide a characterization of the concept of generalized twice differentiability of spectral functions, which was introduced recently in \cite{r25}. 

    \item As a consequence of our second order variational developments, we study  variational properties of nonconvex spectral models, e.g., leading eigenvalue-based penalties and MCP penalties on eigenvalue values, which are widely used in modern statistics and machine learning   yet lack rigorous analysis. We further propose a unified framework that treats such terms without requiring a case-by-case argument. 
\end{itemize}

The remaining parts of this paper are organized as follows. In Section \ref{sect02}, we introduce the notation used throughout and reviews some preliminaries on eigenvalue and spectral functions. 
Section \ref{sect05} presents a characterization of twice epi-differentiability of spectral functions via the same property of their symmetric parts.    Several theoretical applications of this characterization   are provided in  Section \ref{sec:aptt}. 
Section \ref{secExp} illustrates some applications of our established theory  in important examples, including the leading eigenvalues, MCP penalties on singular values, and several commonly used statistical regularizers. We conclude our paper in the
final section.

\section{Notations and Preliminaries}\sce \label{sect02}
In this section, we mainly introduce some notations used throughout this paper,  some preliminary knowledge on the eigenvalue
function and spectral functions. 
Suppose $\X$ and $\Y$ are Euclidean spaces.   
For any set $C\subset \X$, we use $\dd_C$ to denote its indicator function. $\dist (x,C)$ is defined as the distance between $x\in \X$ and set $C$. 
$\R^{m\times n}$ is the space of all real $n\times m$   matrices. 
For any $Z\in\R^{m\times n}$ and given two index sets $\alpha\subseteq  \{1,\ldots, m\}$, $\beta\subseteq \{1,\ldots, n\}$,  we use $ Z_{\alpha}$ to denote the sub-matrix of $Z$ obtained by removing all the columns of $Z$ not in $\alpha$. $Z_{\alpha\beta}$ denotes the $|\alpha|\times|\beta|$ sub-matrix of $Z$ obtained  by removing all the rows of $Z$ not in $\alpha$ and all the columns of $Z$ not in  $\beta$, where $|\alpha|$ is cardinality of the index set $\alpha$. 
In particular, we use $Z_{j}$ to represent the $j$-th column of $Z$, $j=1,\ldots,n$. 
$Z^{\dagger}$ is the Moore-Penrose generalized inverse of $Z$.  
$\N$ stands for the set of natural numbers.  For any vector $x=(x_1,\ldots,x_n) \in \R^n$,  denote by $\Diag (x)$ the diagonal matrix whose   $i$-th diagonal entry is   $x_i$ for any $i=1,\ldots,n$.

Given $X \in \S^n$, we use $\mu_{1}(X) >  \cdots> \mu_{r}(X) $ to denote the distinct eigenvalues of $X$ and define the index sets 
\begin{equation}\label{index}
\alpha_{m} :=\big\{  i \in  \{ 1,\ldots, n\}   | \; \lambda_{i} (X)  = \mu_{m}(X)\big\},\quad  m=1,\ldots,r.
\end{equation}
Let $\O^n (X)$ be the set of all orthogonal matrices $U$  satisfying the eigenvalue decomposition of $X\in\S^n$, i.e., 
\begin{equation}\label{specdocom}
X  =  U \Diag (\lambda (X)) U^\top.
\end{equation}
Define $\l_i (X)$ for any $i \in \{1, \ldots, n\} $ to be the number of eigenvalues of $X$ that are equal to $\lambda_{i} (X)$
but are ranked before $\lm_i(X)$ including $\lambda_{i} (X)$. This integer allows us to locate $\lm_i(X)$
in the group of the eigenvalues of  $X$ as follows:
\begin{equation}\label{ell}
\lm_1(X)\ge \cdots \ge \lm_{i-\ell_i(X)}>\lm_{i-\ell_i(X)+1}(X)= \cdots =\lm_i(X)\ge \cdots\ge \lm_n(X).
\end{equation}
For simplicity,   we  drop $X$ from  $\ell_i (X)$ when the dependence of $\l_i$ on $X $ can be seen from the context. 
For any function $f:\X\rightarrow\R$, we use $f'(x;h)$ to denote its directional derivative at $x$ for $h$ (cf. \cite[7(17)]{rw}). 
The following first-order expansion of the eigenvalue function is given in \cite[Proposition~1.4]{t1}, which is of great importance in the derivation of the main result of this paper. 

\begin{Proposition}[First-order expansion of eigenvalue functions]\ \label{olemma}
Assume that  $X \in \S^n$ has the eigenvalue decomposition \eqref{specdocom} for some $U \in \O^n (X)$.  Let 
 $\mu_{1} >  \cdots> \mu_{r} $ be distinct eigenvalues of $X$. 
Then, for any  $H\in \S^n$ that $H\to 0$ and any $i\in \{1,\ldots,n\}$, we have 
\begin{equation}\label{secondexp1}
\lambda_{i} (X +  H) = \lambda_{i} (X ) +  \lambda_{\l_i} \big( U_{\al_m}^\top  H U_{\al_m}  + U_{\al_m}^\top  H ( \mu_m I-X)^{\dagger} H U_{\al_m}  \big) + O(\|  H \|^3)
\end{equation}
and 
\begin{equation}\label{fexpan}
\lambda_{i} \big( X + H  \big) = \lambda_{i} (X)  +  \lambda_{\l_i} (U_{\al_m}^{\top} H U_{\al_m}) +  O (\| H \|^2 ),   
\end{equation}
where    $m\in \{1,\ldots,r\}$ with  $i \in \alpha_m$. Thus, we have  
$$
 \lambda^{'} ( X ; H )=\big(\lambda (U_{\al_1}^{\top} H U_{\al_1}),\ldots,\lambda (U_{\al_r}^{\top} H U_{\al_r}) \big). 
 $$
\end{Proposition}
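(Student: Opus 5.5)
We will prove the two expansions by reducing to a block-diagonal perturbation problem and using a Schur-complement (or Rayleigh–Ritz/implicit) argument on the eigen-block $\alpha_m$. Since the eigenvalues are orthogonally invariant, we first replace $H$ by $\tilde H:=U^\top H U$ and $X$ by $\Lambda:=\Diag(\lambda(X))$. In these coordinates $U_{\al_m}^\top H U_{\al_m}=\tilde H_{\al_m\al_m}$ and $U_{\al_m}^\top H(\mu_m I-X)^{\dagger}HU_{\al_m}=\sum_{k\neq m}\tilde H_{\al_m\al_k}(\mu_m-\mu_k)^{-1}\tilde H_{\al_k\al_m}$. Thus it suffices to treat $X=\Lambda$ diagonal with the blocks $\al_1,\ldots,\al_r$.

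\textbf{Step 1 (localization).} By Weyl's inequality, $|\lm_j(\Lambda+\tilde H)-\lm_j(\Lambda)|\le\|\tilde H\|$. Hence for $\|\tilde H\|$ small, the eigenvalues $\lm_j(\Lambda+\tilde H)$ with $j\in\al_m$ lie in an interval around $\mu_m$ of radius $\delta<\frac12\min_{k\ne m}|\mu_m-\mu_k|$, and no other eigenvalues enter it. Let $P$ be the spectral projector of $\Lambda+\tilde H$ onto this cluster.

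\textbf{Step 2 (block reduction).} We use the standard construction: there is a matrix $E$ with $\|E\|=O(\|\tilde H\|)$ such that the columns of $\begin{pmatrix} I\\ E\end{pmatrix}$ (block $\al_m$ first, the complementary block $\bar\al$ second) span $\operatorname{ran}P$. This $E$ solves a Riccati equation obtained from the invariance condition. Solving it by the implicit function theorem, or by a contraction argument using the separation of spectra, gives
\[
E=(\mu_m I-\Lambda_{\bar\al\bar\al})^{-1}\tilde H_{\bar\al\al_m}+O(\|\tilde H\|^2).
\]
Orthonormalizing with $Q=\begin{pmatrix} I\\ E\end{pmatrix}(I+E^\top E)^{-1/2}$, the cluster eigenvalues are exactly the eigenvalues of $Q^\top(\Lambda+\tilde H)Q$. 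Expanding this to second order gives
\[
\mu_m I+\tilde H_{\al_m\al_m}+\tilde H_{\al_m\bar\al}(\mu_m I-\Lambda_{\bar\al\bar\al})^{-1}\tilde H_{\bar\al\al_m}+O(\|\tilde H\|^3).
\]
The terms $E^\top E$ coming from the normalization and the term $E^\top\Lambda_{\bar\al\bar\al}E$ combine, and the resulting $\mu_m E^\top E$ contribution is absorbed. We will verify this cancellation, and that the remainder is genuinely $O(\|\tilde H\|^3)$, carefully. This bookkeeping is the main obstacle: every second-order term has to be tracked, and one must check that the error in $E$ only enters at third order. The first-order error in $E$ meets the zero first-order residual of the invariance equation, and this is why it contributes only at third order.

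\textbf{Step 3 (ordering and conclusion).} The cluster eigenvalues, arranged in nonincreasing order, are $\lm_j(\Lambda+\tilde H)$ for $j\in\al_m$. The index $i$ is the $\l_i$-th of them. Applying Weyl's inequality to the $|\al_m|\times|\al_m|$ matrix above, which carries an $O(\|\tilde H\|^3)$ perturbation, yields \eqref{secondexp1}. The second-order term is $O(\|H\|^2)$, so applying Weyl again gives \eqref{fexpan}. Finally, for $t\downarrow0$ we apply \eqref{fexpan} with $tH$. Positive homogeneity of $\lm_{\l_i}$ gives $\lm_i'(X;H)=\lm_{\l_i}(U_{\al_m}^\top HU_{\al_m})$. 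Collecting these over the blocks $m=1,\ldots,r$ gives the stated formula for $\lm'(X;H)$.
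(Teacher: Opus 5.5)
Your proof is correct. The paper gives no argument for this proposition; it quotes the result from Torki \cite[Proposition~1.4]{t1}. So what you supply is a self-contained replacement for a citation, not a variant of a proof in the text. Your route has four steps: rotate to $\Lambda=\Diag(\lambda(X))$, write the cluster's invariant subspace as the graph of a small $E$, compress to the symmetric $|\alpha_m|\times|\alpha_m|$ matrix $Q^\top(\Lambda+\tilde H)Q$, and finish with Weyl. This route fits well because the $(\alpha_m,\alpha_m)$ block of $\Lambda$ is the scalar matrix $\mu_m I$. The linearization of the Riccati equation at $E=0$ is then just left multiplication by the invertible diagonal matrix $\mu_m I-\Lambda_{\bar\alpha\bar\alpha}$. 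The contraction or implicit-function step therefore has constants depending only on the eigenvalue gaps, which makes the $O(\|H\|^3)$ uniform as $H\to 0$. The final Weyl step on the compressed symmetric matrix is what produces the index $\lambda_{\ell_i}$ inside the cluster. What your version buys is transparency about where the Schur-complement term and the ordering come from; the citation buys brevity.

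Two points each deserve a sentence when you write this up.

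\emph{Identifying the subspace.} Rather than starting from $\mathrm{ran}\,P$ and then solving for $E$, take the small solution $E$ of the Riccati equation directly. Set $\mathcal V:=\mathrm{ran}\begin{pmatrix} I\\ E\end{pmatrix}$. Both $\mathcal V$ and $\mathcal V^\perp$ are invariant under the symmetric matrix $\Lambda+\tilde H$, so its spectrum splits. The $|\alpha_m|$ eigenvalues on $\mathcal V$ lie within $O(\|H\|)$ of $\mu_m$, so by Step~1 they are exactly $\lambda_j(\Lambda+\tilde H)$, $j\in\alpha_m$.

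\emph{The error in $E$.} Your explanation of why the error in $E$ is harmless is off: no residual cancellation is involved. With $S:=(I+E^\top E)^{-1/2}=I-\frac12E^\top E+O(\|H\|^4)$ one has
\[
Q^\top(\Lambda+\tilde H)Q=\mu_mI+\tilde H_{\alpha_m\alpha_m}+\tilde H_{\alpha_m\bar\alpha}E+E^\top\tilde H_{\bar\alpha\alpha_m}+E^\top(\Lambda_{\bar\alpha\bar\alpha}-\mu_mI)E+O(\|H\|^3).
\]
Set $E_0:=(\mu_mI-\Lambda_{\bar\alpha\bar\alpha})^{-1}\tilde H_{\bar\alpha\alpha_m}$, so that $E-E_0=O(\|H\|^2)$. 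This error only ever multiplies $\tilde H_{\alpha_m\bar\alpha}$ or $(\Lambda_{\bar\alpha\bar\alpha}-\mu_mI)E_0=-\tilde H_{\bar\alpha\alpha_m}$, both of which are $O(\|H\|)$, so it contributes only at third order. Now let $K:=\tilde H_{\alpha_m\bar\alpha}(\mu_mI-\Lambda_{\bar\alpha\bar\alpha})^{-1}\tilde H_{\bar\alpha\alpha_m}$. Substituting $E_0$, the two cross terms contribute $2K$ and the quadratic term contributes $-K$. This leaves exactly the single $K$ appearing in \eqref{secondexp1}.
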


Next, we review some concepts from variational analysis that will be used extensively in this paper. Given a nonempty set $C\subset\X$ with $\ox\in C$, the  tangent cone $T_ C(\ox)$ to $C$ at $\ox$ is defined by
\begin{equation*}\label{2.5}
T_C(\ox)=\big\{w\in\X|\;\exists\,t_k{\dn}0,\;\;w_k\to w\;\;\mbox{ as }\;k\to\infty\;\;\mbox{with}\;\;\ox+t_kw_k\in C\big\}.
\end{equation*}
The regular normal cone of $C$ at $\bar{x}$ is defined as $\widehat{N}_C(\bar{x})=T_C(\ox)^*$. The (limiting) normal cone ${N}_C(\bar{x})$ is defined as $\{v\in \X\mid \exists\; x_k \rightarrow\ox, x_k\in C\;\mbox{and}\;v_k\rightarrow v\; \mbox{with}\;v_k\in\widehat{N}_C(x_k)\}$. The set of all subgradients of $f$ at $\ox$ is defined as $\partial f(\ox)=\{v\in \X\mid\,(v,-1)\in N_{ \ss\epi f}(\ox,f(\ox))\}$ and is called the (limiting) subdifferential of $f$ at $\ox$. Similarly, we can define the regular subdifferential of $f$ at $\ox$, denoted by $\Hat \sub f(\ox)$. Below, we record  a result, established in \cite[Theorem~6]{l99}, that provides a nice formula  for the subdifferential of spectral functions. 

\begin{Proposition} \label{subsp} Assume that  $\th:\R^n\to \oR$ is a proper and  lower semicontinuous \rm{(}lsc\rm{)}  symmetric  function. Then we have 
$$
 \sub (\th\circ\lm)(X) =\big\{U\Diag(v)U^\top|\; v\in \sub \th(\lm(X)), \; U\in \O^n(X)\big\}.
$$
A similar conclusion holds for the regular subdifferential of $\th\circ\lm$.
\end{Proposition}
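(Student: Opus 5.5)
The plan is to prove Proposition~\ref{subsp} by reducing everything to the Euclidean setting through the von Neumann trace inequality and the orthogonal invariance of $g=\th\circ\lm$. I would first treat the regular subdifferential and then pass to limits for the limiting subdifferential.

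\emph{Regular subdifferential, inclusion ``$\supseteq$''.} Take $v\in\Hat\sub\th(\lm(X))$ and $U\in\O^n(X)$, and set $Y=U\Diag(v)U^\top$. For $X'$ near $X$, von Neumann's inequality gives
\[
\la Y,X'-X\ra\le \la v,\lm(X')\ra-\la v,\lm(X)\ra+\big(\la v,\lm^\downarrow\ra\ \text{correction}\big).
\]
One cannot use $\la v,\lm(X')\ra$ directly, because $v$ need not be ordered. So I would instead choose a permutation $P$ and $V\in\O^n(X')$ so that $V$ nearly aligns with $U$. Using symmetry of $\th$, I then compare $\th(\lm(X'))=\th(x')$ with $x'=\mathrm{diag}(U^\top X'U)$-type vectors. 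Concretely, the plan is the following. Write $\tilde x=\mathrm{diag}(U^\top X' U)$. Then $\la Y,X'-X\ra=\la v,\tilde x-\lm(X)\ra$, and $\tilde x$ is majorized by $\lm(X')$, with $\|\tilde x-\lm(X)\|\le\|X'-X\|$. The block structure of Proposition~\ref{olemma} shows that, up to $o(\|X'-X\|)$, $\lm(X')$ is obtained from $\tilde x$ by reordering within the blocks $\al_m$ together with a doubly stochastic averaging. Since $v\in\Hat\sub\th(\lm(X))$ and $\th$ is symmetric, $v$ is constant on blocks in the appropriate averaged sense, via the permutation invariance $P v\in\Hat\sub\th(\lm(X))$ for block permutations $P$. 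This lets one bound $\th(\lm(X'))-\th(\lm(X))\ge \la v,\tilde x-\lm(X)\ra - o(\|X'-X\|)$.

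\emph{Regular subdifferential, inclusion ``$\subseteq$''.} Let $Y\in\Hat\sub g(X)$. I would first show that $X$ and $Y$ commute and admit a simultaneous ordered diagonalization. To do this, perturb $X$ along $X_t=e^{tS}Xe^{-tS}$ with $S$ skew. Because $g(X_t)=g(X)$, regularity forces $\la Y,[S,X]\ra\le o(1)$ for both $\pm S$, hence $\la [X,Y],S\ra=0$ for all skew $S$, i.e.\ $XY=YX$. Thus $Y=U\Diag(v)U^\top$ with $U\in\O^n(X)$. Restricting $g$ to the diagonal family $U\Diag(x)U^\top$ then gives $v\in\Hat\sub\th(\lm(X))$. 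This uses that $\th(x)=g(U\Diag(x)U^\top)$ for all $x$, and that $\|U\Diag(x-\lm(X))U^\top\|=\|x-\lm(X)\|$.

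\emph{Limiting case.} Take $Y_k\in\Hat\sub g(X_k)$ with $X_k\to X$, $g(X_k)\to g(X)$ and $Y_k\to Y$. By the regular case, $Y_k=U_k\Diag(v_k)U_k^\top$ with $v_k\in\Hat\sub\th(\lm(X_k))$ and $U_k\in\O^n(X_k)$. By compactness of $\O^n$, pass to a subsequence with $U_k\to U\in\O^n(X)$ and $v_k\to v$. Continuity of $\lm$ gives $\th(\lm(X_k))=g(X_k)\to g(X)$, so $v\in\sub\th(\lm(X))$. The reverse inclusion comes from approximating $\lm(X)$ by points $x_k$ with regular subgradients $v_k\to v$, and setting $X_k=U\Diag(x_k)U^\top$.

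The main obstacle is the ``$\supseteq$'' direction for the regular subdifferential when $v$ is not ordered consistently with $\lm(X)$, which is precisely the nonconvex difficulty. I would try first to prove it via the commuting/diagonal restriction argument combined with the first-order expansion \eqref{fexpan}. The sticking point is controlling the within-block rotations, and I would handle it using block-permutation invariance of $\Hat\sub\th$.
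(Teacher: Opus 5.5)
The paper does not prove this statement itself; it quotes it from Lewis \cite[Theorem~6]{l99}, together with the regular version. Most of your architecture is sound:
\begin{itemize}
\item The skew-symmetric perturbation $X_t=e^{tS}Xe^{-tS}$ correctly forces $XY=YX$.
\item Restricting to $x\mapsto U\Diag(x)U^\top$ then gives the regular ``$\subseteq$''.
\item Compactness of $\O^n$ gives the limiting ``$\subseteq$'' (note $\|v_k\|=\|Y_k\|$ is bounded).
\end{itemize}

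The genuine gap is the regular ``$\supseteq$''. You leave it as a sketch, and its key claim is false: $v$ need not be constant on the blocks $\al_m$, and averaging $v$ pushes the inequality the wrong way. Take $\th=\max$ on $\R^2$, $X=0$, $U=I$, $v=(0,1)$. You need $\lm_1(X')\ge X'_{22}$. The block average $(\frac12,\frac12)\in\Hat\sub\th(0)$ only gives $\lm_1(X')\ge\frac12{\rm tr}\,X'$, which does not imply it at $X'=\Diag(-t,t)$. The global von Neumann inequality is also too crude, because $\lm(Y)=v^\downarrow$ need not lie in $\Hat\sub\th(\lm(X))$; take $\th(x)=5\min\{x_1,x_2\}$ at $(2,1)$.

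The missing idea is to sort $v$ only inside each block.
\begin{itemize}
\item Choose $Q\in\P^n_X$ with $\tilde v:=Qv$ satisfying $\tilde v_{\al_m}\in\R^{|\al_m|}_{\dn}$.
\item By symmetry, $\tilde v\in\Hat\sub\th(Q\lm(X))=\Hat\sub\th(\lm(X))$.
\item Set $\tilde U:=UQ^\top$. Then $\tilde U\in\O^n(X)$ and $Y=\tilde U\Diag(\tilde v)\tilde U^\top$.
\end{itemize}
For $H=X'-X$, Fan's inequality applied blockwise together with \eqref{fexpan} yields
\[
\la Y,H\ra=\sum_{m=1}^r\big\la\Diag(\tilde v_{\al_m}),\tilde U_{\al_m}^\top H\tilde U_{\al_m}\big\ra\le\sum_{m=1}^r\big\la\tilde v_{\al_m},\lm(\tilde U_{\al_m}^\top H\tilde U_{\al_m})\big\ra=\la\tilde v,\lm(X')-\lm(X)\ra+O(\|H\|^2).
\]
Now apply the regular subgradient inequality for $\tilde v$ and use $\|\lm(X')-\lm(X)\|\le\|H\|$. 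This gives $g(X')\ge g(X)+\la Y,H\ra+o(\|H\|)$. It is the same device the paper uses in Theorem~\ref{specsubd} and Proposition~\ref{lbs1}.

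Your Schur--Horn route can also be rescued, but not by averaging $v$. You would have to write the doubly stochastic matrix as a convex combination of block permutations $P$, and use that every $Pv$ is a regular subgradient with a common $o(\cdot)$ (there are only finitely many $P$).

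Finally, in the limiting ``$\supseteq$'' the approximating points $x_k$ are unordered, so one extra step is needed. Take $P_k$ with $P_kx_k=\lm(X_k)$. Note that $UP_k^\top\in\O^n(X_k)$ and $P_kv_k\in\Hat\sub\th(\lm(X_k))$. Only then apply the regular ``$\supseteq$'' at $X_k$.
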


Given a function $f:\X \to \oR$, its domain is defined  by 
$\dom f =\big\{ x \in \X|\; f(x) < \infty \big \}$.
The  function $f$ is called locally Lipschitz continuous around $\ox $ {\em relative} to $C \subset \dom f$
with constant $\ell \ge0 $
if  $\ox \in C$ with $f(\ox)$ finite  and there exists  a neighborhood $U$ of $\ox$ such that 
\begin{equation*} \label{lipwrtdomain}
 |f(x)  - f(y ) | \leq  \ell\, \| x - y \|  \quad \mbox{for all }\;       x , y \in U \cap C.    
\end{equation*}
Such a function is called locally Lipschitz continuous relative to $C$ if  it is locally Lipschitz continuous around every $\ox\in C $ relative to $C$. 
 Piecewise linear-quadratic functions (not necessarily convex) and an indicator function of a nonempty set are important examples of 
 functions that are locally Lipschitz continuous relative to their {domains}.

The subderivative function of $f:\X\to \oR$ at $\ox$ with $f(\ox)$ finite, denoted by $\d f(\ox)\colon\X\to \oR$,  is defined for any $\ow\in\X$ by
\begin{equation}\label{semi1}
{\mathrm d}f(\ox)(\ow)=\liminf_{\substack{
   t\dn 0 \\
  w\to \ow
  }} {\frac{f(\ox+tw)-f(\ox)}{t}}.
\end{equation}
Next, we review some preliminary results about the subderivative of the spectral functions. 
Denote $\P^n$ as the set of all $n\times n$ permutation matrices. 
Recall that a function $\th:\R^n\to \oR$ is called symmetric if for every 
$x\in \R^n$ and every  $Q\in\P^n$,  $\th(Qx)=\th(x)$ holds.   
It can be checked directly from \eqref{spec} that $\th$ can be expressed as a composite function in the form 
\begin{equation}\label{spec2}
\th(x)=g\big(\Diag(x)\big) \quad \mbox{for all}\;\; x\in \R^n. 
\end{equation} 
Suppose $X$ has  the eigenvalue decomposition of \eqref{specdocom} with $r$ distinct eigenvalues. 
Denote $\P^n_X:=\{Q=\Diag(P_1,\ldots,P_r)\mid P_m\in \P^{|\al_m|}, \;m=1,\ldots,r\}$. It can be checked directly  that $\P^n_X\subset \P^n$. Moreover, if $Q\in \P^n_X $, we have $Q\lm(X)=\lm(X)$. 
If $\th:\R^n\to \oR$ is a symmetric function and $X\in \S^n$ with $\th(\lm(X))$ finite, 
it is not hard to see  that 
\begin{equation}\label{sub_sym}
\d \th(\lm( X )) (v) =\d \th(\lm( X )) (Qv)
\end{equation}
for any $v\in \R^n$ and any $Q\in \P^n_X$.

Recalling the index sets $\al_m$, $m=1,\ldots,r$,  from \eqref{index}, we  partition a vector $p\in \R^n$ into  $(p_{\al_1},\ldots, p_{\al_r})$, where $p_{\al_m}\in \R^{|\al_m|}$ for any $m=1,\ldots,r$.
Recall   from \cite[Definition~7.25]{rw} that an lsc function $f: \X\to \oR$ is said to be subdifferentially regular at $\ox\in \X$  if    $f(\ox)$ is   finite and $\widehat N_{\ss\epi f}(\ox, f(\ox))= N_{\ss \epi f}(\ox,f(\ox))$. 
Denote by $\R^n_{\dn}$ the set of all vectors $(x_1,\ldots,x_n)$ such that $x_1\ge \cdots\ge x_n$. 
The following result shows the explicit formula of the subderivative of  spectral functions when they are subdifferentially regular. Note that a similar result was recently established   \cite[Theorem 3.5]{ms2} for convex spectral functions. 
 
 \begin{Theorem}[Subderivatives of spectral functions]\label{specsubd}
Let $\th:\R^n\to \oR$ be a symmetric function  and let $X\in \S^n$ with $(\th\circ \lm)(X)$   finite. If $\th$ is  lsc and subdifferentially regular  with $\sub \th(\lm(X))\neq \emptyset$, then for all $H \in \S^n$ we have
\begin{equation}\label{dchain}
\d (\th\circ\lm)( X ) (H) =  \d \theta (\lambda(X)) (\lambda^{'} (X ; H)).
\end{equation}   
\end{Theorem}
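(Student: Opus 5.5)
The plan is to prove the two inequalities in \eqref{dchain} separately. The inequality ``$\ge$'' should hold for any symmetric $\th$, with no regularity assumption. The inequality ``$\le$'' will use subdifferential regularity through support-function representations of both subderivatives.

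\emph{Step 1 (``$\ge$'').} Fix $H\in\S^n$ and take sequences $t_k\dn 0$, $H_k\to H$ that realize the liminf in \eqref{semi1} for $g=\th\circ\lm$ at $X$. I set $w_k:=(\lm(X+t_kH_k)-\lm(X))/t_k$. The map $\lm$ is globally Lipschitz and directionally differentiable by Proposition~\ref{olemma}, so
\[
\frac{\lm(X+t_kH)-\lm(X)}{t_k}\to\lm'(X;H)
\quad\text{and}\quad
\|\lm(X+t_kH_k)-\lm(X+t_kH)\|\le \|t_k(H_k-H)\|,
\]
which gives $w_k\to\lm'(X;H)$. Since $g(X+t_kH_k)=\th(\lm(X)+t_kw_k)$, the difference quotients of $g$ coincide with those of $\th$ along $(t_k,w_k)$. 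Hence $\d g(X)(H)\ge \d\th(\lm(X))(\lm'(X;H))$.

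\emph{Step 2 (``$\le$'').} Here I would use regularity. By Lewis' inheritance results \cite{l99}, $g=\th\circ\lm$ is lsc and subdifferentially regular at $X$ whenever $\th$ is so at $\lm(X)$. By Proposition~\ref{subsp}, $\sub g(X)\neq\emptyset$. Then \cite[Theorem~8.30]{rw} gives that $\d g(X)$ is the support function of $\sub g(X)$ and $\d\th(\lm(X))$ is the support function of $\sub\th(\lm(X))$. Using Proposition~\ref{subsp}, it then suffices to show that for every $v\in\sub\th(\lm(X))$ and every $U\in\O^n(X)$,
\[
\la U\Diag(v)U^\top,H\ra\le \sup_{u\in\sub\th(\lm(X))}\la u,\lm'(X;H)\ra .
\]
Fix some $\bar U\in\O^n(X)$. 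Every $U\in\O^n(X)$ has the form $U=\bar U\,\Diag(W_1,\ldots,W_r)$ with $W_m\in\O^{|\al_m|}$. Set $B_m:=\bar U_{\al_m}^\top H\bar U_{\al_m}$. Because $\Diag(v)$ is diagonal, the left-hand side equals
\[
\sum_{m=1}^r\la v_{\al_m},\,\mathrm{diag}(W_m^\top B_mW_m)\ra .
\]
By Schur's theorem, $\mathrm{diag}(W_m^\top B_mW_m)$ is majorized by $\lm(B_m)$. Hence it lies in the convex hull of the permutations $P_m\lm(B_m)$, $P_m\in\P^{|\al_m|}$. So the sum is at most $\max_{Q\in\P^n_X}\la v,Q\lm'(X;H)\ra=\max_{Q}\la Q^\top v,\lm'(X;H)\ra$, where I use the formula for $\lm'(X;H)$ from Proposition~\ref{olemma}.

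\emph{Step 3 (closing the argument).} To finish, I need $Q^\top v\in\sub\th(\lm(X))$ for all $Q\in\P^n_X$. This follows from the symmetry of $\th$ together with $Q\lm(X)=\lm(X)$, in the same way as \eqref{sub_sym}. Combining Steps 2 and 3 gives ``$\le$'', and Step 1 gives the reverse inequality.

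The main obstacle is the first part of Step 2, namely the transfer of subdifferential regularity from $\th$ to $\th\circ\lm$. I plan to invoke \cite{l99} for it. If that transfer were not available in the needed form, I would try instead to prove ``$\le$'' directly. The first attempt would use directions $H_k$ that are block diagonal in the $\bar U$-basis, for which $\lm(X+tH_k)=\lm(X)+tw_k$ holds exactly. The difficulty is that this only controls the block-diagonal part of $H$. The off-diagonal blocks shift the eigenvalues by $O(t^2)$ by \eqref{secondexp1}, and without Lipschitz continuity of $\th$ this perturbation cannot be absorbed. That is exactly why the support-function route via regularity is the preferred choice.
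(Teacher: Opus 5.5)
Your proposal is correct and follows essentially the same route as the paper. The ``$\ge$'' inequality comes from difference quotients along $w_k=(\lm(X+t_kH_k)-\lm(X))/t_k$. The ``$\le$'' inequality comes from the support-function representation $\d g(X)=\sigma_{\sub g(X)}$, which the paper uses implicitly when it chooses $Y\in\sub g(X)$ with $\d g(X)(H)\le\ve+\la Y,H\ra$ and which you justify explicitly through Lewis' regularity transfer; a blockwise spectral inequality and the invariance of $\th$ under $\P^n_X$ then finish the argument. The only differences are cosmetic: you use Schur's majorization where the paper sorts $v$ blockwise and applies Fan's inequality, and you apply the symmetry to $\sub\th(\lm(X))$ rather than to $\d\th(\lm(X))$ via \eqref{sub_sym}.
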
  
\begin{proof}
The proof of the inequality ``$\ge$" in \eqref{dchain} is exactly the same as that in \cite[Theorem 3.5]{ms2}, so we omit it here for brevity. To justify  the opposite inequality, we can also obtain for any $\varepsilon>0$, we may choose $Y\in\sub (\th \circ \lambda)(X)$ such that
$$    
 \d (\th \circ \lambda)(X) (H) \leq  \ve  + \la Y, H \ra   
$$
by following the same argument as in the proof of \cite[Theorem 3.5]{ms2}. 
Since $Y \in \sub (\th \circ \lambda)(X)$, it follows from Proposition~\ref{subsp} that there are $U\in \O^n(X)$ and $v\in \sub  \th(\lm(X))$ such that $Y=U\Diag(v)U^\top$.  
Since $v=(v_{\al_1}, \ldots, v_{\al_r})$, we can choose a permutation matrix $Q_m\in \P^{|\al_m|}$ such that $\tilde v_{\al_m}=Q_mv_{\al_m}$ with  $\tilde v_{\al_m}\in \R^{|\al_m |}_{\dn}$ for any $m=1,\ldots,r$.
Set $Q= \Diag(Q_1,\ldots,Q_r)$ and use  Fan's inequality \cite{Fan49} to obtain 
\begin{eqnarray*}
	\d (\th \circ \lambda)(X) (H) &\leq&  \ve  + \la Y, H \ra  =   \ve  + \la \Diag(v), U^\top H U \ra  = \ve  +  \sum_{m=1}^{r} \la \Diag(v)_{\al_m\al_m}  ,  U_{\al_m}^\top  H U_{\al_m} \ra\\
	&\leq &  \ve  +  \sum_{m=1}^{r} \la \tilde v_{\al_m}, \lambda ( U_{\al_m}^\top H U_{\al_m}) \ra   \
	=  \ve  +  \sum_{m=1}^{r}\la Q_mv_{\al_m}, \lambda ( U_{\al_m}^\top H U_{\al_m}) \ra    \\
	&=&   \ve  +  \la v  ,  Q^\top \lambda^{'} (X; H) \ra   
	\leq   \ve  +  \d \th ( \lambda(X)) (Q^\top \lambda^{'} (X; H) )\\
	&=&  \ve  +  \d \th ( \lambda(X)) ( \lambda^{'} (X; H) ),
\end{eqnarray*}
where the last inequality results from the fact that $\th$ is  subdifferentially regular, and  $v\in \sub \th(\lm(X))$ and where the last equality results from \eqref{sub_sym}. 
Letting $\varepsilon \dn 0$, we get the opposite inequality in  \eqref{dchain}.
\end{proof}

\section{ Twice Epi-Differentiability of Spectral Functions}\label{sect05}  \sce

The main objective of this section is to provide a characterization of   twice epi-differentiability of the spectral function $g$ in \eqref{spec} via the same property of the symmetric function $\theta$ therein. To do so, we recall some second-order variational properties, important for our developments in this section. 
Given a function $f: \X \to \oR$ and $\ox \in \X$ with $f(\ox)$ finite, denote the parametric  family of 
second-order difference quotients for $f$ at $\ox$ for $\ov\in \X$ as 
\begin{equation*}\label{lk01}
\Delta_t^2 f(\bar x , \ov)(w)=\dfrac{f(\ox+tw)-f(\ox)-t\langle \ov,\,w\rangle}{\frac{1}{2}t^2}\quad\quad\mbox{with}\;\;w\in \X, \;\;t>0.
\end{equation*}
The {second subderivative}  of $f$ at $\ox$ for $\ov$   is defined by 
\begin{equation*}\label{ssd}
\d^2 f(\bar x , \ov)(w)= \liminf_{\substack{
   t\dn 0 \\
  w'\to w
  }} \Delta_t^2 f(\ox , \ov)(w'),\;\; w\in \X.
\end{equation*}
Following \cite[Definition~13.6]{rw}, a function $f:\X \to \oR$ is said to be {twice epi-differentiable} at $\bar x$ for $\ov\in\X$, with $f(\ox) $ finite, 
if the sets $\epi \Delta_t^2 f(\bar x , \ov)$ converge to $\epi \d^2 f(\bar x,\ov)$ as $t\downarrow 0$; see \cite[Chapter~4]{rw} for the definition of a sequence of sets. The latter means by  
\cite[Proposition~7.2]{rw} that for every sequence $t_k\downarrow 0$ and every $w\in\X$, there exists a sequence $w_k \to w$ such that
\begin{equation}\label{dedf}
\d^2 f(\bar x,\ov)(w) = \lim_{k \to \infty} \Delta_{t_k}^2 f(\ox , \ov)(w_k).
\end{equation}

The  second subderivative was introduced in \cite{r85}, where the concept of twice epi-differentiability for convex functions was studied for the first time. 
 The importance of the second subderivative resides in the fact that it can characterize the quadratic growth condition for optimization problems; see \cite[Theorem~13.24]{rw}. So, it is crucial
 for many applications to calculate it in terms of the initial data of an optimization problem. This task was carried out for 
   major classes of functions including the convex piecewise linear-quadratic functions  in  \cite[Proposition~13.9]{rw},  the second-order cone in \cite[Example~5.8]{mms2}, the cone of positive semidefinite symmetric matrices in  \cite[Example~3.7]{ms}, and  the augmented Lagrangian of constrained optimization problems in  \cite[Theorem~8.3]{mms2}.  The second subderivative  can also be leveraged in the study of stability properties of optimization problems and generalized equations. To this end, it is vital to be able to calculate the second subderivative and characterize twice epi-differentiability for important classes of functions in order to study various stability properties of constrained and composite optimization problems. 

We aim in this section to provide a unified approach that simultaneously answers both aforementioned aspects for spectral functions. We begin with the following result that gives a lower bound for the second subderivative of the spectral function in \eqref{spec}. We should add that a similar result was recently proved in \cite[Proposition 5.3]{ms} for convex spectral functions. 
Here, we  extend this result  by removing the convexity restriction. While its   proof resembles that of \cite[Proposition 5.3]{ms}, it requires careful modifications of the argument therein.

\begin{Proposition}[Lower estimate for second subderivatives] \label{lbs1} Assume that $g:\S^n\to \oR$ is lsc and has the spectral representation in \eqref{spec} and $Y\in \sub g(X)$. 
Let $\mu_1>\cdots>\mu_r$ be the distinct eigenvalues of $X$  and $U\in \O^n(X)$.  
Then, for any $H\in \S^n$ we have 
\begin{equation}\label{lbss}
\d^2g(X,Y)(H)\ge \d^2\th\big(\lm(X),v\big)\big(\lm'(X;H)\big) + 2\sum_{m=1}^r \big\la \Diag(y)_{\al_m\al_m},U_{\al_m}^\top  H ( \mu_{m} I  - X)^{\dagger} H U_{\al_m}   \big\ra,
\end{equation}
where  $\al_m$, $m=1,\ldots,r$, are defined in \eqref{index}, and 
$y\in \sub \th(\lm(X))$ such that $Y=U\Diag(y)U^\top$ and $v=(v_{\al_1},\ldots,v_{\al_r})=Qy$ for some $Q\in  \P^n_X$ with $v_{\al_m}\in \R^{|\al_m|}_{\dn}$ for any $m=1,\ldots,r$. Moreover, for any such permutation matrix $Q$, we always have
$$
\d^2\th\big(\lm(X),v\big)\big(\lm'(X;H)\big)=\d^2\th\big(\lm(X),y\big)\big(Q^\top \lm'(X;H)\big)
$$
\end{Proposition}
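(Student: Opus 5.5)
We first prove the \emph{moreover} part, because it is used in the main estimate. Write $x:=\lm(X)$. Since $Q\in\P^n_X$, we have $Qx=x$. By symmetry of $\th$, $\th(x+tQ w)=\th(Q(x+tw))=\th(x+tw)$. Moreover $\la v,Qw\ra=\la Qy,Qw\ra=\la y,w\ra$, so
$$
\Delta_t^2\th(x,v)(Qw)=\Delta_t^2\th(x,y)(w) \quad\mbox{for all } w,\ t>0.
$$
Taking $\liminf$ as $t\dn 0$ and $w'\to w$, and using that $w'\mapsto Qw'$ is a linear isometry, gives $\d^2\th(x,v)(Qw)=\d^2\th(x,y)(w)$. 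Putting $w=Q^\top\lm'(X;H)$ yields the claimed identity.

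For the lower estimate, fix $H$, and take sequences $t_k\dn0$ and $H_k\to H$ realizing the $\liminf$ in $\d^2g(X,Y)(H)$. We may assume this value is finite; otherwise there is nothing to prove. Set $x_k:=\lm(X+t_kH_k)$. Then $g(X+t_kH_k)=\th(x_k)$ and $g(X)=\th(x)$.

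The key step is to bound $\la Y,H_k\ra$ from above so that the negative term $-t_k\la Y,H_k\ra$ in the quotient is controlled by $\la v, x_k-x\ra$ plus the curvature correction.
\begin{itemize}
\item First write $\la Y,H_k\ra=\sum_m\la \Diag(y_{\al_m}),U_{\al_m}^\top H_kU_{\al_m}\ra$. The off-diagonal blocks vanish because $\Diag(y)$ is diagonal.
\item Next use Fan's inequality blockwise, with $v_{\al_m}=Q_my_{\al_m}$ nonincreasing. Applied to the matrix
$$
M_{k,m}:=U_{\al_m}^\top H_kU_{\al_m}+t_kU_{\al_m}^\top H_k(\mu_mI-X)^\dagger H_kU_{\al_m},
$$
it gives $\la\Diag(y_{\al_m}),M_{k,m}\ra\le\la v_{\al_m},\lm(M_{k,m})\ra$.
\item Finally, Proposition~\ref{olemma}, namely \eqref{secondexp1} applied to the perturbation $t_kH_k$, gives $t_k\lm(M_{k,m})=(x_k-x)_{\al_m}+O(t_k^3)$. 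Here we use that $\lm_{\l_i}$ is positively homogeneous: the expansion term is $\lm_{\l_i}(t_kM_{k,m})=t_k\lm_{\l_i}(M_{k,m})$.
\end{itemize}
Combining these three facts,
$$
t_k\la Y,H_k\ra\le \la v,x_k-x\ra-t_k^2\sum_m\la\Diag(y)_{\al_m\al_m},U_{\al_m}^\top H_k(\mu_mI-X)^\dagger H_kU_{\al_m}\ra+O(t_k^3).
$$

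Substituting this bound into the second-order quotient gives
$$
\Delta^2_{t_k}g(X,Y)(H_k)\ge \Delta^2_{t_k}\th(x,v)(w_k)+2\sum_m\la\Diag(y)_{\al_m\al_m},U_{\al_m}^\top H_k(\mu_mI-X)^\dagger H_kU_{\al_m}\ra+O(t_k),
$$
where $w_k:=(x_k-x)/t_k$. By \eqref{fexpan}, $w_k\to\lm'(X;H)$. Passing to the $\liminf$ yields \eqref{lbss}: the first term is bounded below by $\d^2\th(x,v)(\lm'(X;H))$, and the second converges by continuity in $H_k$.

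I expect the main obstacle to be the Fan inequality step. The quadratic correction must be absorbed inside the same eigenvalue argument $M_{k,m}$, rather than expanded separately, so that \eqref{secondexp1} applies exactly. This is also the point where the unordered $y\in\sub\th(\lm(X))$ must be replaced by the blockwise sorted $v$. No convexity is needed anywhere, only Fan's inequality and the symmetry of $\th$.
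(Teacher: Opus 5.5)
Your proposal is correct and follows essentially the same route as the paper. The shared steps are: splitting $\Delta^2_{t_k}g(X,Y)(H_k)$ into $\Delta^2_{t_k}\th(\lm(X),v)$ at the eigenvalue difference quotient plus a first-order remainder; a blockwise Fan inequality applied to the matrix that already carries the curvature term inside the eigenvalue argument, combined with \eqref{secondexp1}; and a permutation change of variables for the last identity, which you obtain as an exact identity of difference quotients rather than via two inequalities. One small remark: the case $\d^2g(X,Y)(H)=-\infty$ is not trivially settled as you say, but your argument never uses finiteness, so it covers that case verbatim.
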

\begin{proof} Since  $Y\in \sub g(X)$,  it follows from Proposition~\ref{subsp} that there are $U\in \O^n(X)$ and $y\in \sub  \th(\lm(X))$ such that $Y=U\Diag(y)U^\top$.  
Since $y=(y_{\al_1}, \ldots, y_{\al_r})$, we can choose a permutation matrix $Q_m\in \S^{|\al_m|}$ such that $v_{\al_m}=Q_m y_{\al_m}$ with  $v_{\al_m}\in \R^{|\al_m|}_{\dn}$ for any $m=1,\ldots,r$.
Set $Q= \Diag(Q_1,\ldots,Q_r)$ and observe that 
\begin{equation}\label{subg}
v=Qy\in Q\sub \th(\lm(X))= \sub \th(Q\lm(X))= \sub \th(\lm(X)).
\end{equation}
Let  $H\in \S^n$ and   pick sequences $H_k\to H$ and $t_k\dn 0$. Setting $\Delta_{t_k}\lm(X)(H_k):=\big(\lm(X+t_kH_k)-\lm(X)\big)/t_k$, we get 
\begin{eqnarray*}
\Delta^2_{t_k} g\big(X,Y\big)(H_k)&=& \frac{\th\big(\lm(X+t_kH_k)\big)- \th\big(\lm(X)\big)-t_k\big\la Y,H_k\big\ra}{\sm t_k^2}\\
&=&  \frac{\th\big(\lm(X)+t_k\Delta_{t_k}\lm(X)(H_k)\big)- \th\big(\lm(X)\big)-t_k\big\la v,\Delta_{t_k}\lm(X)(H_k)\big\ra}{\sm t_k^2}\\
&&+ \frac{\big\la v,\Delta_{t_k}\lm(X)(H_k)\big\ra -\big\la Y,H_k\big\ra}{\sm t_k}\\
&=& \Delta^2_{t_k} \th\big(\lm(X),v\big)(\Delta_{t_k}\lm(X)(H_k)\big)+ \frac{\big\la v,\Delta_{t_k}\lm(X)(H_k)\big\ra -\big\la Y,H_k\big\ra}{\sm t_k}.
\end{eqnarray*}
It follows from   $Y=U\Diag(y) U^\top$ that
\begin{equation}\label{sdcy}
\big\la Y,H_k\big\ra= \big\la U \Diag(y) U^\top,H_k\big\ra=\big\la \Diag(y),U^\top H_kU\big\ra=  \sum_{m=1}^r \big\la \Diag(y)_{\al_m\al_m},U_{\al_m}^\top  H_kU_{\al_m}\big\ra.
\end{equation}
On the other hand, it results from  \eqref{secondexp1} and     Fan's inequality \cite{Fan49} that 
\begin{eqnarray*}
\big\la v,\Delta_{t_k}\lm(X)(H_k)\big\ra &=& \sum_{m=1}^r\sum_{j\in \al_m} \frac{v_j \big( \lm_j(X+t_kH_k)-\lm_j(X)\big)}{t_k}\\
&=&\sum_{m=1}^r\sum_{j\in \al_m} v_j \lm_{\ell_j}\big(U_{\al_m}^\top H_kU_{\al_m}+t_kU_{\al_m}^\top  H_k( \mu_m I-X)^\dagger H_kU_{\al_m}\big) + O(t_k^2)\\
&\ge & \sum_{m=1}^r\big\la \Diag(y)_{\al_m\al_m}, U_{\al_m}^\top H_kU_{\al_m}+t_kU_{\al_m}^\top  H_k( \mu_m I-X)^\dagger H_kU_{\al_m}\big\ra + O(t_k^2).\\
\end{eqnarray*}
Combining this with \eqref{sdcy} brings us to 
\begin{eqnarray*}
\frac{\big\la v,\Delta_{t_k}\lm(X)(H_k)\big\ra -\big\la Y,H_k\big\ra}{\sm t_k}&\ge &  2\sum_{m=1}^r\big\la \Diag(y)_{\al_m\al_m},  U_{\al_m}^\top  H_k( \mu_m I-X)^\dagger H_kU_{\al_m}\big\ra + O(t_k).
\end{eqnarray*}
This leads us to the estimate 
\begin{eqnarray*}
\Delta^2_{t_k} g\big(X,Y\big)(H_k)&\ge & \Delta^2_{t_k} \th\big(\lm(X),v\big)\big(\Delta_{t_k}\lm(X)(H_k)\big) \\
&& + 2\sum_{m=1}^r\big\la \Diag(y)_{\al_m\al_m},  U_{\al_m}^\top  H_k( \mu_m I-X)^\dagger H_kU_{\al_m}\big\ra + O(t_k),
\end{eqnarray*}
which in turn clearly justifies the lower estimate in \eqref{lbss} for the second subderivative of $g$ at $X$ for $Y$
since $\Delta_{t_k}\lm(X)(H_k)\to \lm'(X;H)$ as $k\to \infty$. 

To prove the last claim, pick $Q\in \P^n_X$ such that 
 $v=(v_{\al_1},\ldots,v_{\al_r})=Qy$  with $v_{\al_m}\in \R^{|\al_m|}_{\dn}$ for any $m=1,\ldots,r$ and 
observe that  $Q^\top=Q^{-1}=\Diag(P_1^{-1},\ldots,P_r^{-1})\in \P^n_X$. Thus, we obtain for any $w\in \R^n$ that
\begin{eqnarray*}
\d^2 \th(\lm( X ), v) (w) &=& \liminf_{\substack{t\dn 0\\w'\to w}}\frac{\th \big(\lambda (X)+tw'\big)-\th \big(\lambda (X)\big)-t\la v,w'\ra}{\sm t^2}\\
&=& \liminf_{\substack{t\dn 0\\ w'\to w}}\frac{\th \big(\lambda (X)+tQ^\top w'\big)-\th \big(\lambda (X)\big)-t\la y,Q^\top w'\ra}{\sm t^2}\\
&\ge & \liminf_{\substack{t\dn 0\\ w'\to Q^\top w}}\frac{\th \big(\lambda (X)+tw'\big)-\th \big(\lambda (X)\big)-t\la y,w'\ra}{t}=\d^2 \th(\lm( X ), y) (Q^\top w).
\end{eqnarray*}
One can argue similarly to derive the opposite inequality $\d^2 \th(\lm( X ), v) (w)\le \d^2 \th(\lm( X ), y) (Q^\top w)$ for any $w\in \R^n$, which confirms the claimed equality.
\end{proof}

Note that when the spectral function $g$ in Proposition~\ref{lbs1} is convex, the vector $y$ therein can be chosen as $\lm(Y)$, which leads us to $v=y$, where $v$ is taken from Proposition~\ref{lbs1}. In this case, the matrix $Q$ will be the identity matrix and consequently, this result boils down to 
\cite[Proposition 5.3]{ms}. 

Our next goal is to investigate when the inequality in \eqref{lbss} becomes equality. Such a task requires knowing more about the critical cone of spectral functions. Recall that the critical of a function $f:\X\to \oR$
at $x$ for $v\in \sub f(x)$ is defined by 
$$
C_f(x,v)=\big\{w\in \X|\; \d f(x)(w)=\la w,v\ra\big\}.
$$
Note that the critical cone of a function has a close relationship with its second subderivative. To see this, taking $(x,v)\in \gph \sub f$ such that $\d^2 f(x,v)$ is proper, we can conclude from \cite[Proposition~13.5]{rw} that $\dom \d^2 f(x,v)\subset {{\cal C}_f}(x, v) $. As shown later in this section, equality requires some additional assumptions; \cite[Proposition~3.4]{ms} for more details.

The following proposition presents a characterization of  the critical cone of the spectral function, and extends \cite[Proposition 5.4]{ms2}, where a similar result was obtained for convex spectral functions.  Since its proof is similar to that of \cite[Proposition 5.4]{ms2} with similar modifications illustrated in Proposition \ref{lbs1}, we only provide a sketch of its proof.

\begin{Proposition}[critical cone of spectral functions] \label{crit} Assume that $g:\S^n\to \oR$ is lsc and has the spectral representation in \eqref{spec} and  that   $Y\in \sub g(X)$. Assume further that    one of the following conditions is satisfied:
\begin{itemize}[noitemsep,topsep=0pt]
 \item [{\rm (a)}] $g$ is  subdifferentially regular;
 \item [{\rm (b)}] $\th$  is  locally Lipschitz continuous  at $\lm(X)$ relative to its domain and $Y\in \Hat\sub g(X)$. 
 \end{itemize} 
Then, we have   $H\in C_g(X,Y)$ if and only if 
  $\lm'(X;H)\in C_\theta\big(\lm(X),v\big)$ and the matrices $\Diag(y)_{\al_m\al_m}$ and $U_{\al_m}^\top  HU_{\al_m}$ have a simultaneous ordered spectral decomposition for any $m=1,\ldots,r$, where  $\al_m$ taken from \eqref{index}
  and $U\in \O^n(X)$,  where $Y=U\Diag(y)U^\top$ for some $y\in \sub \th(\lm(X))$, and where $v$ is taken from Proposition~{\rm \ref{lbs1}}.
\end{Proposition}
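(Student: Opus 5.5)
The plan is to compute both sides of the defining identity $\d g(X)(H)=\la Y,H\ra$ in spectral terms and compare them through the Fan-type chain already used in the proof of Theorem~\ref{specsubd}.

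\textbf{Formula for $\d g(X)(H)$.}
\begin{itemize}
\item Under (a), Theorem~\ref{specsubd} gives $\d g(X)(H)=\d\th(\lm(X))(\lm'(X;H))$, since subdifferential regularity of $g$ passes to $\th$ through Proposition~\ref{subsp}; I would assume this as in \cite{ms2}.
\item Under (b), local Lipschitz continuity of $\th$ relative to its domain lets us drop the liminf over $H'\to H$ in \eqref{semi1}. Combining this with the expansion \eqref{fexpan} and $\lm(X+tH)=\lm(X)+t\lm'(X;H)+O(t^2)$, one expects $\d g(X)(H)=\d\th(\lm(X))(\lm'(X;H))$ again. Here the perturbation of $\lm(X)+tw'$ off $\dom\th$ has to be handled via the relative Lipschitz estimate, exactly as in \cite[Proposition 5.4]{ms2}.
\end{itemize}
By \eqref{sub_sym} we can replace $\lm'(X;H)$ by $Q^\top\lm'(X;H)$ whenever convenient.

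\textbf{Chain of inequalities.} Write $\la Y,H\ra=\sum_m\la\Diag(y)_{\al_m\al_m},U_{\al_m}^\top HU_{\al_m}\ra$ as in \eqref{sdcy}. Fan's inequality gives
$$\la Y,H\ra\le\sum_m\la v_{\al_m},\lm(U_{\al_m}^\top HU_{\al_m})\ra=\la v,\lm'(X;H)\ra .$$
Since $v\in\sub\th(\lm(X))$ by \eqref{subg}, we have
$$\la v,\lm'(X;H)\ra\le\d\th(\lm(X))(\lm'(X;H)).$$
The direction of this last step needs care:
\begin{itemize}
\item Under (a), $v$ is a regular subgradient of $\th$, so the inequality holds directly.
\item Under (b), $Y\in\Hat\sub g(X)$ gives $y\in\Hat\sub\th(\lm(X))$ by the regular version of Proposition~\ref{subsp}, and then $v=Qy\in\Hat\sub\th(\lm(X))$ by symmetry, so the inequality again follows.
\end{itemize}
Conversely, $Y\in\Hat\sub g(X)$ (automatic under (a)) gives $\la Y,H\ra\le\d g(X)(H)$.

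\textbf{Conclusion.} We therefore have the chain
$$\la Y,H\ra\le\la v,\lm'(X;H)\ra\le\d\th(\lm(X))(\lm'(X;H))=\d g(X)(H).$$
Hence $H\in C_g(X,Y)$ if and only if both inequalities are equalities. Equality in the second means exactly $\lm'(X;H)\in C_\th(\lm(X),v)$. Equality in the first, summed over $m$ with each term individually bounded by Fan, holds if and only if equality holds blockwise. By the equality case of Fan's inequality, blockwise equality means $\Diag(y)_{\al_m\al_m}$ and $U_{\al_m}^\top HU_{\al_m}$ admit a simultaneous ordered spectral decomposition.

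\textbf{Main obstacle.} The step I expect to be hardest is establishing the subderivative formula $\d g(X)(H)=\d\th(\lm(X))(\lm'(X;H))$ in case (b). There I would first prove the inequality "$\ge$" via \eqref{fexpan} and lower semicontinuity, and then obtain "$\le$" by choosing $w'\to\lm'(X;H)$ realizing the liminf for $\th$. The aim is to build matrices $H'=U\Diag(\cdot)U^\top$-type perturbations with $\lm(X+tH')=\lm(X)+tw'$ up to $o(t)$, controlling the discrepancy through relative Lipschitz continuity on $\dom\th$.
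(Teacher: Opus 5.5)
Your proposal is correct and follows essentially the paper's argument. It uses the same chain $\la Y,H\ra\le\la v,\lm'(X;H)\ra\le\d\th(\lm(X))(\lm'(X;H))=\d g(X)(H)$ from Fan's inequality and $v\in\Hat\sub\th(\lm(X))$, and then $H\in C_g(X,Y)$ iff both steps are equalities, with Fan's equality case giving the simultaneous ordered spectral decomposition.

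The only difference is that the paper cites the case-(b) subderivative formula rather than deriving it. If you derive it yourself, relative Lipschitz continuity does not let you drop the liminf over $H'\to H$ (take $\th$ the indicator of a Euclidean ball). What is actually required is the construction in your last paragraph: sort $w'$ blockwise using the symmetry of $\th$, then pull $X+tH'$ back into $\dom g$ via $\dist(Z,\dom g)=\dist(\lm(Z),\dom\th)$, as in the proof of Theorem~\ref{thm:ted}.
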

\begin{proof} Suppose that either (a) or (b) holds.
Since $Y\in \sub g(X)$, it follows from Proposition~\ref{subsp} that there is $U\in \O^n(X)$ and $y \in \sub \th \big(\lm(X)\big)$ such that  $Y=U\Diag(y)U^\top$. Take the matrix $Q$ from Proposition~\ref{lbs1} and observe for any $H\in \S^n$ that 
\begin{eqnarray*}
 \big\la Y,H\big\ra &=&  \big\la \Diag(y),U^\top HU\big\ra=  \sum_{m=1}^r \big\la \Diag(y)_{\al_m\al_m},U_{\al_m}^\top  HU_{\al_m}\big\ra\nonumber\\
&\le &  \sum_{m=1}^r \big\la v_{\al_m}, \lm(U^\top_{\al_m} H U_{\al_m})\big\ra =\big\la v,\lm'(X;H)\big\ra
\le  \d \th\big(\lm(X)\big)\big(\lm'(X;H)\big)=\d g(X)(H),
\end{eqnarray*}
where the last equality results from \cite[Theorem~3.5]{ms} when (b) holds and from Theorem~\ref{specsubd} when (a) is satisfied and where the last inequality follows from $v\in \Hat \sub \th \big(\lm(X)\big)$ and \cite[Exercise~8.4]{rw} in both cases. The rest of the proof can be done as that of \cite[Proposition 5.4]{ms}, which was argued for convex spectral functions.
\end{proof}

The next result consists of some useful observations that will be utilized in our characterization of twice epi-differetiability of spectral functions.  

\begin{Lemma}\label{tpd_g}
    Assume that $g:\S^n\to \oR$ is lsc and has the spectral representation in \eqref{spec}. If $(X,Y)\in \gph  \sub g$
and $Y=U\Diag(y)U^\top$ for some $y\in  \sub\th(\lm(X))$ and $U\in \O^n(X)$, then the following properties hold.
\begin{itemize}[noitemsep,topsep=0pt]
\item [{\rm (a)}] Twice epi-differentiability of $\th$ at $\lm(X)$ for $y$ amounts to that  of $\th$ at $\lm(X)$ for $v$, where  $v$ is defined in Proposition~{\rm\ref{lbs1}}.

\item [{\rm (b)}] If $g$ is twice epi-differentiable at $X$ for $Y$, then it enjoys the same property at $\Lm(X)=\Diag(\lm(X))$ for $\Diag(y)$.

\item [{\rm (c)}] The inequality 
$$
\d^2g(\Lambda(X),\Diag (y))(\Diag(w))\le \d^2\theta(\lambda(X),y)(w)
$$
holds for any $w\in \R^n$.
\end{itemize}
\end{Lemma}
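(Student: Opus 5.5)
The three parts are independent, and each reduces to an invariance of the second-order difference quotients under an orthogonal or permutation change of variables. Throughout we use the facts that $\th$ is symmetric, that $Q\lm(X)=\lm(X)$ for $Q\in\P^n_X$, and that $g(V^\top ZV)=g(Z)$ for $V\in\O^n$.

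For (a), take $Q\in\P^n_X$ with $v=Qy$, as in Proposition~\ref{lbs1}. Symmetry of $\th$ and $Q\lm(X)=\lm(X)$ give, for every $t>0$ and $w\in\R^n$,
$$\Delta_t^2\th(\lm(X),v)(w)=\Delta_t^2\th(\lm(X),y)(Q^\top w),$$
since $\la v,w\ra=\la y,Q^\top w\ra$. The map $w\mapsto Q^\top w$ is a linear isometry. Hence $\epi\Delta_t^2\th(\lm(X),v)$ is the image of $\epi\Delta_t^2\th(\lm(X),y)$ under the homeomorphism $(w,\alpha)\mapsto(Qw,\alpha)$. Set convergence is preserved by such maps, and the same relation holds for the limits, which is the last claim of Proposition~\ref{lbs1}. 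So one family of epigraphs converges exactly when the other does. Alternatively one can use the sequential characterization \eqref{dedf} directly: given $t_k\dn0$ and $w$, transport the recovery sequence $w_k$ for $y$ at $Q^\top w$ to $Qw_k$.

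For (b), use $\Lm(X)=U^\top XU$ and $\Diag(y)=U^\top YU$. Orthogonal invariance of $g$ gives
$$\Delta_t^2g(\Lm(X),\Diag(y))(W)=\Delta_t^2g(X,Y)(UWU^\top),$$
because $g(\Lm(X)+tW)=g(X+tUWU^\top)$ and $\la\Diag(y),W\ra=\la Y,UWU^\top\ra$. Consequently $\d^2g(\Lm(X),\Diag(y))(W)=\d^2g(X,Y)(UWU^\top)$. Given $t_k\dn0$ and $W$, apply \eqref{dedf} at $X$ for $Y$ with direction $UWU^\top$ to obtain $H_k\to UWU^\top$. Then $W_k:=U^\top H_kU\to W$ realizes \eqref{dedf} at $(\Lm(X),\Diag(y))$. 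One should also note that $(\Lm(X),\Diag(y))\in\gph\sub g$, which follows from Proposition~\ref{subsp} with $U=I$.

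For (c), restrict to diagonal directions. By \eqref{spec2}, $g(\Lm(X)+t\Diag(w'))=\th(\lm(X)+tw')$, and $\la\Diag(y),\Diag(w')\ra=\la y,w'\ra$. Therefore
$$\Delta_t^2g(\Lm(X),\Diag(y))(\Diag(w'))=\Delta_t^2\th(\lm(X),y)(w').$$
Take $t_k\dn0$ and $w_k\to w$ attaining the liminf defining $\d^2\th(\lm(X),y)(w)$. Then $\Diag(w_k)\to\Diag(w)$ is an admissible sequence in the liminf defining $\d^2g(\Lm(X),\Diag(y))(\Diag(w))$, which gives the inequality. The reverse inequality need not hold, because the liminf on the $g$ side ranges over non-diagonal perturbations; this is why only ``$\le$'' is claimed.

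The only point needing care is in (a): one must make sure the permutation is applied to the direction and the subgradient consistently, i.e. as $Q^\top$ on $w$ while $v=Qy$. This is a bookkeeping step rather than a genuine obstacle. All three parts are direct consequences of invariance and contain no analytic difficulty.
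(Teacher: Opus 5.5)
Your proposal is correct and follows the paper's proof. In (a) you transport the recovery sequences through the block permutation $Q$ exactly as the paper does; your epigraph-image formulation is an equivalent repackaging. In (b) you use the same orthogonal change of variables $W\mapsto U^\top W U$. For (c), the paper only invokes the composite representation $\th=g\circ\Diag$ with $\Diag(y)$ as a multiplier, plus an argument ``similar to'' \cite[Proposition~13.14]{rw}. Your direct observation that the difference quotients coincide along diagonal directions, so the liminf defining $\d^2g$ ranges over a larger set, is exactly that argument written out.
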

\begin{proof}
To prove (a), take the  block permutation matrix $Q\in \P_X^n$ from Proposition~{\rm\ref{lbs1}} such that 
$v=Qy$. Take $w\in\R^n$ and a sequence  $t_k\dn 0$, and assume that 
$\th$ is twice epi-differentiable at $\lm(X)$ for $y$. So, we find a sequence 
$w_k\to Q^\top w$ such that $\Delta^2_{t_k} \th\big(\lm(X),y\big)(w_k)\to \d^2\th\big(\lm(X),y\big)(Q^\top w)$. 
It is not hard to see that 
$$
 \Delta^2_{t_k} \th\big(\lm(X),v\big)(Qw_k)= \Delta^2_{t_k} \th\big(\lm(X),y\big)(w_k).
$$
This brings us to 
\begin{eqnarray*}
 \lim_{k\to \infty}\Delta^2_{t_k} \th\big(\lm(X),v\big)(Qw_k)&=& 
\lim_{k\to \infty}\Delta^2_{t_k} \th\big(\lm(X),y\big)(w_k)\\
&=&
\d^2\th\big(\lm(X),y\big)(Q^\top w)=\d^2\th\big(\lm(X),v\big)(w),   
\end{eqnarray*}
where the last equality was justified at the end of the proof of Proposition~\ref{lbs1}. This demonstrates that   $\th$ is twice epi-differentiable at $\lm(X)$ for $v$. The opposite implication can be justified similarly. 

To prove (b), it can be checked directly from \eqref{spec2} that 
\begin{equation}\label{n_ted}
\begin{cases}
 \d^2g(\Lambda(X),\mbox{Diag}(y))(W)=\d^2 g(X,Y)(UWU^\top)\quad  \mbox{and}\\
 \Delta_{t}^2 g(X,Y)(W)=\Delta_{t}^2 g\big(\Lambda(X), \Diag(y)\big)(U^\top WU)   
\end{cases}  
\end{equation}
for any $W\in \S^n$ and any $U\in \O^n(X)$ such that $Y=U\Diag(y)U^\top$.
Take $H\in \S^n$ and a sequence $t_k\dn 0$. 
Since $g$ is twice epi-differentiable at $X$ for $Y$, there are  sequences
$H_k\to  UHU^\top$ such that $\Delta_{t_k}^2 g(X,Y)(H_k)\to \d^2 g(X,Y)(UHU^\top)$. Set $\Hat H_k:=U^\top H_kU$ and observe that 
$\Delta_{t_k}^2 g(X,Y)(H_k)=\Delta_{t_k}^2 g\big(\Lambda(X), \Diag(y)\big)(\Hat H_k)$. This, together with \eqref{n_ted}, leads us to 
\begin{eqnarray*}
\lim_{k\to \infty}\Delta_{t_k}^2 g\big(\Lambda(X), \Diag(y)\big)(\Hat H_k)&=& 
\lim_{k\to \infty}\Delta_{t_k}^2 g(X,Y)(H_k)\\
&=&\d^2 g(X,Y)(UHU^\top)= \d^2g(\Lambda(X),\mbox{Diag}(y))(H),
\end{eqnarray*}
which justifies (b).  

Turning now to (c), we know from \eqref{spec2} that the symmetric function $\th$ can be represented as the composite function   $\theta =g\circ F$ with $F(x):=\Diag(x)$ for any $x\in \R^n$. This allows us to use the extensive
theory of variational properties of composite functions (cf. \cite{rw, ms, mms1}) to justify (c). 
Note that since $y\in  \sub\th(\lm(X))$, we deduce from \cite[Theorem~5]{l99} that $\Diag(y)\in \sub g\big(\Diag(\lm(X)\big)$. 
Using this and the fact that $\nabla F(\lm(X))^*\Diag(y)=y$, we can view $\Diag(y)$ as a Lagrange multiplier associated with $y$ for the composite representation of $\th$.
Therefore, a similar argument as 
the proof of \cite[Proposition~13.14]{rw} gives us the claimed inequality.
\end{proof}

After these preparations, we are now in a position to present our main result in this section in which a characterization of twice epi-differentiability of spectral functions will be given via the same property of their symmetric parts. We should add here that twice epi-differentiability for some classes of eigenvalue functions, including the maximum and leading eigenvalue functions, was justified first by Torki in \cite{t2}. The latter, however, did not provide a similar characterization as the one below by relating it to the same property of the symmetric part of a spectral function.

\begin{Theorem}\label{thm:ted} 
Assume that $g:\S^n\to \oR$ is lsc and has the spectral representation in \eqref{spec}  and that $\th$ is locally Lipschitz continuous  relative to its domain. If $(X,Y)\in \gph \Hat \sub g$
and $Y=U\Diag(y)U^\top$ for some $y\in \Hat \sub\th(\lm(X))$ and $U\in \O^n(X)$, 
  then $g$ is twice epi-differentiable at $X$ for $Y$ if and only if $\th$ is twice epi-differentiable at $\lm(X)$ for $y$. Moreover, if one of these equivalent conditions holds, then the second subderivative of $g$ at $X$ for $Y$ is calculated by 
\begin{equation}\label{ssub_calc}
\d^2g(X,Y)(H)= \d^2\th\big(\lm(X),v\big)\big(\lm'(X;H)\big) + 2\sum_{m=1}^r \big\la \Diag(y)_{\al_m\al_m},U_{\al_m}^\top  H ( \mu_{m} I  - X)^{\dagger} H U_{\al_m}   \big\ra,  
\end{equation}
for any $H\in \S^n$ such that $\d^2g(X,Y)(H)<\infty$, where  $v$ is defined in Proposition~{\rm\ref{lbs1}} and where  $\al_m$, $m=1,\ldots,r$, are defined in \eqref{index} and where  $\mu_1>\cdots>\mu_r$ are the distinct eigenvalues of $X$.
\end{Theorem}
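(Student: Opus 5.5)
The plan is to prove the two implications separately and get formula \eqref{ssub_calc} along the way.

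\emph{Necessity.} Assume $g$ is twice epi-differentiable at $X$ for $Y$. By Lemma~\ref{tpd_g}(b), $g$ has the same property at $\Lambda(X)$ for $\Diag(y)$. Fix $w\in\R^n$ and $t_k\dn 0$. Then there is $W_k\to\Diag(w)$ with $\Delta^2_{t_k}g(\Lambda(X),\Diag(y))(W_k)\to \d^2 g(\Lambda(X),\Diag(y))(\Diag(w))$. I would take $w_k:=\lambda(\Lambda(X)+t_kW_k)$ minus $\lambda(X)$, divided by $t_k$, suitably permuted inside the blocks $\al_m$ so that it converges to $w$; Proposition~\ref{olemma} makes this possible. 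Since $\th(\lm(X)+t_kw_k)=g(\Lambda(X)+t_kW_k)$, the only discrepancy between the two difference quotients is the first-order term $\big(\la y,w_k\ra-\la\Diag(y),W_k\ra\big)/(\frac12 t_k)$. By \eqref{secondexp1}, this term tends to $-2\sum_m\la\Diag(y)_{\al_m\al_m},\,(W_k)_{\al_m}^\top(\mu_mI-\Lambda(X))^\dagger (W_k)_{\al_m}\ra$, which vanishes in the limit because the off-block parts of $\Diag(w)$ are zero. One first-order part remains: the diagonal block term $\la y_{\al_m},\lambda(\text{block})\ra$ versus $\la\Diag(y_{\al_m}),\text{block}\ra$. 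I would control it by Fan's inequality together with the permutation $Q$ and the lower estimate. This gives $\limsup_k\Delta^2_{t_k}\th(\lm(X),y)(w_k)\le \d^2 g(\Lambda(X),\Diag(y))(\Diag(w))$, and Lemma~\ref{tpd_g}(c) bounds the right-hand side by $\d^2\th(\lm(X),y)(w)$. Combined with the liminf definition, the limit equals $\d^2\th(\lm(X),y)(w)$, so $\th$ is twice epi-differentiable for $y$. If the block-term bookkeeping proves awkward, the fallback is to use directly the reverse inequality from Lemma~\ref{tpd_g}(c) together with Proposition~\ref{lbs1} applied at $\Lambda(X)$ with $H=\Diag(w)$, where the quadratic term is zero. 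That yields $\d^2g(\Lambda(X),\Diag(y))(\Diag(w))=\d^2\th(\lm(X),y)(w)$, and a recovery sequence for $g$ then induces one for $\th$.

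\emph{Sufficiency and the formula.} Assume $\th$ is twice epi-differentiable at $\lm(X)$ for $y$; by Lemma~\ref{tpd_g}(a) it is then twice epi-differentiable for $v$. Fix $H$ with the right-hand side of \eqref{ssub_calc} finite (otherwise \eqref{lbss} gives $\d^2g=\infty$ and there is nothing to prove). Then $\lm'(X;H)\in\dom\d^2\th(\lm(X),v)\subset C_\th(\lm(X),v)$. The key step, which I expect to be the main obstacle, is constructing a recovery sequence $H_k\to H$ for $g$ such that $\Delta^2_{t_k}g(X,Y)(H_k)$ tends to the right-hand side of \eqref{lbss}; once that holds, the lower estimate forces equality, which proves both twice epi-differentiability and \eqref{ssub_calc}.

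For the construction I would take $w_k\to\lm'(X;H)$ from twice epi-differentiability of $\th$ for $v$. The aim is $H_k$ with $\lm(X+t_kH_k)=\lm(X)+t_kw_k$ exactly. To get this, build $X+t_kH_k=\hat U_k\Diag(\lm(X)+t_kw_k)\hat U_k^\top$, where $\hat U_k$ is a second-order perturbation of $U$ chosen so that $H_k\to H$. In that construction, the blocks $U_{\al_m}^\top HU_{\al_m}$ are diagonalized by orthogonal matrices compatible with the ordering of $v_{\al_m}$, and the off-diagonal parts of $H$ are produced by $\hat U_k=U\exp(t_k S)$ with $S$ skew-symmetric, $S_{ij}=(U^\top HU)_{ij}/(\lambda_j-\lambda_i)$ for $i,j$ in different blocks. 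One complication is the local Lipschitz continuity of $\th$ relative to its domain: when $\lm(X)+t_kw_k$ is not ordered decreasingly, I rely on the symmetry of $\th$ together with the relative Lipschitz property to replace $w_k$ by a block-sorted version at a cost of $o(t_k^2)$ in the quotient. I would then expand the first-order mismatch $\la v,w_k\ra-\la Y,H_k\ra$ to second order. Equality in Fan's inequality holds, since $v_{\al_m}$ and the block eigenvalues are ordered simultaneously (Proposition~\ref{crit}, justified by $\la Y,H\ra=\d g(X)(H)$ on the domain). Hence this mismatch, divided by $t_k^2/2$, converges precisely to the quadratic term $2\sum_m\la\Diag(y)_{\al_m\al_m},U_{\al_m}^\top H(\mu_mI-X)^\dagger HU_{\al_m}\ra$. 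This is where I expect most of the technical effort to go.
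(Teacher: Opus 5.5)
\textbf{Gap in the necessity direction.} Your necessity argument has a gap at exactly the step you left vague. Put $z_k:=\big(\lambda(\Lambda(X)+t_kW_k)-\lambda(X)\big)/t_k$. By Proposition~\ref{olemma}, $z_k$ is sorted within each block $\alpha_m$ and tends to $w^{\downarrow}$, the blockwise decreasing rearrangement of $w$.

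Fan's inequality gives $\langle \Diag(y),W_k\rangle\le\langle y,w_k\rangle+o(t_k)$ when $z_k$ is paired with the block-sorted vector $v=Qy$, that is, for $w_k:=Q^\top z_k$. But then $w_k\to Q^\top w^{\downarrow}$, and this equals $w$ only if the same $Q$ also sorts $w$. If you merely permute so that $w_k\to w$, the inequality can fail by an amount that is not $o(t_k)$.

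A concrete case: take $n=2$, $\theta=\max$, $X=0$, $y=(0,1)$, $w=(1,1)$. Then $W_k=\Diag(1-\sqrt{t_k},1)$ is a recovery sequence for $g$, and both block permutations give $w_k\to w$. The identity yields $\Delta^2_{t_k}\theta(\lambda(X),y)(w_k)=2/\sqrt{t_k}\to\infty\neq 0=\d^2\theta(\lambda(X),y)(w)$, whereas the swap works. Your fallback does not repair this. Proposition~\ref{lbs1} at $\Lambda(X)$ produces $\d^2\theta(\lambda(X),y)(Q^\top w^{\downarrow})$, not the value at $w$, and ``a recovery sequence for $g$ induces one for $\theta$'' is precisely the transfer at issue.

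The missing idea is a case split plus an ordering fact.
If $\d^2\theta(\lambda(X),y)(w)=\infty$, any $w_k\to w$ works.
If it is finite, then $y\in\widehat\partial\theta(\lambda(X))$ and \cite[Proposition~13.5]{rw} give $\d\theta(\lambda(X))(w)=\langle y,w\rangle$. By \eqref{sub_sym}, $\langle y,Pw\rangle\le\d\theta(\lambda(X))(Pw)=\langle y,w\rangle$ for every $P\in\P^n_X$. Hence $y_{\alpha_m}$ and $w_{\alpha_m}$ are similarly ordered in every block, and one $Q\in\P^n_X$ sorts both.

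With this $Q$ and $w_k:=Q^\top z_k$ your argument closes:
\begin{itemize}
\item $\theta(\lambda(X)+t_kw_k)=g(\Lambda(X)+t_kW_k)$ holds exactly;
\item the term involving $(W_k)_{\alpha_m}^\top(\mu_mI-\Lambda(X))^\dagger(W_k)_{\alpha_m}$ is $o(1)$;
\item Lemma~\ref{tpd_g}(c) then gives $\limsup_k\Delta^2_{t_k}\theta(\lambda(X),y)(w_k)\le\d^2\theta(\lambda(X),y)(w)$.
\end{itemize}
Repaired this way, your route avoids the paper's $h_k$, the projection $l_k$ onto $\dom\theta$, and the Lipschitz estimate \eqref{eq:p3}. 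The paper's passage from $H_k$ to $\widehat H_k$ in \eqref{eq:p4} tacitly needs the same choice of $Q$; the example above breaks it with $Q=I$.

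\textbf{The sufficiency direction.} Your sufficiency construction is correct and genuinely different from the paper's. Take $P=\Diag(P_1,\ldots,P_r)$ from Proposition~\ref{crit} and set $X+t_kH_k:=Ue^{t_kS}P\,\Diag(\lambda(X)+t_kw_k)\,P^\top e^{-t_kS}U^\top$. Symmetry of $\theta$ gives $g(X+t_kH_k)=\theta(\lambda(X)+t_kw_k)$ exactly. Expanding the conjugation with $B_k=P\Diag(w_k)P^\top$ gives
\[
U^\top H_kU=B_k+[S,\Lambda(X)]+t_k\big([S,B_k]+\tfrac12[S,[S,\Lambda(X)]]\big)+O(t_k^2).
\]
The diagonal blocks of $[S,B_k]$ vanish, and the $(m,m)$ block of $\tfrac12[S,[S,\Lambda(X)]]$ is $-U_{\alpha_m}^\top H(\mu_mI-X)^\dagger HU_{\alpha_m}$. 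So the first-order mismatch yields exactly the quadratic term in \eqref{ssub_calc}.

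The paper's $E_k=U(A_k-t_kD)U^\top$ matches eigenvalues only up to $o(t_k^2)$. That is why it needs \cite[Proposition~2.3]{dlms} and \eqref{eq:lipth}. Your version uses the Lipschitz hypothesis only through Proposition~\ref{crit}.

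Two small repairs are needed.
\begin{itemize}
\item Split on whether $\d^2g(X,Y)(H)<\infty$, not on finiteness of the right-hand side. It is the former that puts $H$ in $C_g(X,Y)$ and so supplies the matrices $P_m$.
\item Block-sorting $w_k$ is unnecessary: use the unsorted $w_k$ in $B_k$, as the paper does in $A_k$. If you do sort, the justification is not Lipschitz continuity but the rearrangement inequality $\langle v,\tilde w_k\rangle\ge\langle v,w_k\rangle$, which makes the sorted sequence again a recovery sequence.
\end{itemize}
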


\begin{proof}  Assume  first that $\th$ is twice epi-differentiable at $\lm(X)$ for $y$ and take $H\in \S^n$. 
Our goal is to demonstrate the validity of  \eqref{dedf} for the following two cases: 1) $\d^2g(X,Y)(H)=\infty$;
2) $\d^2g(X,Y)(H)<\infty$. For the first case, take a sequence $t_k\downarrow 0$ and set $H_k:=H$ for each $k$ and observe that 
$$
\infty=\d^2 g(X,Y)(H)\le \liminf_{k\to \infty} \Delta_{t_k}^2 g(X ,Y)(H)\le\limsup_{k\to \infty} \Delta_{t_k}^2 g(X ,Y)(H)\le \infty=\d^2g(X,Y)(H).
$$
This implies that $\lim_{k\to \infty} \Delta_{t_k}^2 g(X ,Y)(H)=\d^2g(X,Y)(H)$, which confirms \eqref{dedf} for this case. Turing now to  the second case, 
observe that given a sequence $t_k\downarrow 0$, it suffices to construct a sequence $H_k\to H$ such that 
$$
\lim_{k\to \infty} \Delta^2_{t_k} g\big(X,Y\big)(H_k) =  \d^2\th\big(\lm(X),v\big)\big(\lm'(X;H)\big) + 2\sum_{m=1}^r \big\la \Diag(y)_{\al_m\al_m},U_{\al_m}^\top  H ( \mu_{m} I  - X)^{\dagger} H U_{\al_m}   \big\ra,
$$
where $y$ and $v$ are defined in Proposition~\ref{lbs1}. Indeed, doing so leads us via Proposition~\ref{lbs1} to 
$$
\lim_{k\to \infty} \Delta^2_{t_k} g\big(X,Y\big)(H_k) = \d^2g(X,Y)(H),
$$
which proves \eqref{dedf} for the second case. According to Lemma~\ref{tpd_g}(a),  $\th$ is twice epi-differentiable at  at $\lm(X)$ for $v$.
Thus, we find a sequence $\{w^k \b$, converging to $\lm'(X;H)$, such that 
\begin{equation}\label{seq1}
 \Delta^2_{t_k} \th\big(\lm(X),v\big)(w^k)\to \d^2\th\big(\lm(X),v\big)\big(\lm'(X;H)\big).   
\end{equation}
Note that $w^k$ has an equivalent representation  $w^k=(w^k_{\al_1},\ldots,w^k_{\al_r})$ with $w^k_{\al_m}\in \R^{|\al_m|}$ for any $m=1,\ldots,r$. For each $k$ and each $m\in \{1,\ldots, r\}$, we can find  a permutation matrices $G_m^k$ such that $\tilde w^k_{\al_m}:=G_m^k w^k_{\al_m}\in \R^{|\al_m|}_{\dn}$. Since the entries of $G_m^k$ are either $0$ or $1$, we can assume by passing to a subsequence if necessary that 
for each $m\in \{1,\ldots, r\}$ the sequence $\{G_m^k\b$ is constant. 
For any $m\in \{1,\ldots, r\}$, set $G_m:=G_m^k$ and define the block diagonal permutation matrix $G\in \P^n_X$ by $G:=\Diag(G_1,\ldots,G_r)$.
It follows from  $w^k\to \lm'(X;H)$ that $\tilde w^k\to G\lm'(X;H)$.
Since $\tilde w^k\in \R^{|\al_1|}_{\dn}\times \ldots \times \R^{|\al_r|}_{\dn}$, we get $G\lm'(X;H)\in \R^{|\al_1|}_{\dn}\times \ldots \times \R^{|\al_r|}_{\dn}$. On the other hand, we already have
\begin{equation}\label{dir_eigen}
\lambda^{'} ( X ; H )=\big(\lambda (U_{\al_1}^{\top} H U_{\al_1}),\ldots,\lambda (U_{\al_r}^{\top} H U_{\al_r}) \big) \in \R^{|\al_1|}_{\dn}\times \ldots \times \R^{|\al_r|}_{\dn}.
\end{equation}
Combining these and remembering that $G$ is a permutation matrix allows us to conclude that  
\begin{equation}\label{eq:G}
    G\lm'(X;H)=\lm'(X;H),
\end{equation} 
which in turn implies that $\tilde w^k\to \lm'(X;H)$. Moreover, since $G\in \P^n_X$ and  $\th$
is symmetric, we obtain for any $k$ that  
\begin{equation}\label{th_lm}
\th \big(\lambda (X)+t_k w^k\big)=\th \big(\lambda (X)+t_k\tilde w^k\big).  
\end{equation}
By $\d^2 g(X,Y)(H)<\infty$ and the lower estimate in \eqref{lbss}, we get 
$\d^2\th\big(\lm(X),v\big)\big(\lm'(X;H)\big)<\infty$, which coupled with \eqref{seq1} tells us that $\lambda (X)+t_k w^k\in \dom \th$ for all $k$ sufficiently large. Combining it with \eqref{th_lm}, we get for all $k$ sufficiently large that
\begin{equation}\label{th_lm2}
\lambda (X)+t_k\tilde w^k\in \dom \th.
\end{equation}
Since $\d^2 g(X,Y)(H)<\infty$, we deduce from \cite[Proposition~13.5]{rw} that $\d g(X)(H)\le \la H,Y\ra$, which together with $Y\in \Hat\sub g(X)$ yields $H\in C_g(X,Y)$.  We thus conclude from Proposition~\ref{crit} that  the matrices $\Diag(y)_{\al_m\al_m}$ and $U_{\al_m}^\top  HU_{\al_m}$ have a simultaneous ordered spectral decomposition for any $m=1,\ldots,r$. Thus, we find for each $m=1,\ldots,r$ an orthogonal matrix $  P_m\in  \O^{|\al_m|}(\Diag(y)_{\al_m\al_m})\cap \O^{|\al_m|}(U_{\al_m}^{\top} H U_{\al_m})$ such that 
\begin{equation}\label{spc_ref}
\Diag(y)_{\al_m\al_m}=  P_m\Diag(v)_{\al_m\al_m}  P_m^\top\quad \mbox{and}\quad  U_{\al_m}^{\top} H U_{\al_m}=  P_m\Lambda(U_{\al_m}^\top  H U_{\al_m})  P_m^\top.
\end{equation}
Take   the  matrix $P_m$, $m=1,\ldots,r$, and define the matrix 
\begin{equation}\label{mata}
A_k:=\Diag\big(P_1\Diag(  w^k_{\al_1})P_1^\top,\ldots,P_r\Diag( w^k_{\al_r})P_r^\top\big)+\widehat{H}, \quad k\in \N, 
\end{equation}
and 
$$
D:=\Diag\big(U_{\al_1}^\top D_1U_{\al_1},\ldots,U_{\al_r}^\top D_rU_{\al_r}\big)
$$
where 
$$\widehat{H}_{ij}=\begin{cases}
0 & \mbox{if}\;i,j\in\alpha_m\;\mbox{for any} \; m=1,\ldots,r,\\
\widetilde{H}_{ij} & \mbox{otherwise},
\end{cases}$$
with $\widetilde{H}:=U^\top HU$ and  
$D_m:=  H (\mu_{m}I-X)^{\dagger} H $ for any $m=1,\ldots,r$, and then set 
\begin{equation}\label{what}
E_k:=    U(A_k- t_kD)U^\top, \quad k\in \N.
\end{equation}
Using the second identity in \eqref{spc_ref} and \eqref{dir_eigen}, one can easily see 
for each $m=1,\ldots,r$ that 
$$
P_m\Diag(  w^k_{\al_m})P_m^\top \to P_m\Diag(\lambda (U_{\al_m}^{\top} H U_{\al_m}))P_m^\top =  P_m\Lambda(U_{\al_m}^\top  H U_{\al_m})  P_m^\top=U_{\al_m}^{\top} H U_{\al_m},
$$
which in turn results in $E_k\to U^\top\widetilde{H}U=H$.
Since  $\langle Y,U\widehat{H}U^T\rangle=\langle \Diag(y),\widehat{H}\rangle=0$, 
the identity $ U_{\al_m}^\top U A_kU^\top U_{\al_m}=P_m\Diag(w^k_{\al_m})P_m^\top$, $m=1,\ldots,r$, brings us to 
\begin{equation*} 
\begin{aligned}
&\big\la Y,UA_kU^\top\big\ra = 
\sum_{m=1}^{r}  \big\la \Diag(y)_{\al_m\al_m},  U_{\al_m}^\top \big( UA_kU^\top\big) U_{\al_m}\big\ra+\langle Y,U\widehat{H}U^T\rangle\\
&= \sum_{m=1}^{r}  \big\la \Diag(y)_{\al_m\al_m},    P_m\Diag(w^k_{\al_m})  P_m^\top \big\ra
= \sum_{m=1}^{r}  \big\la   P_m^\top \Diag(y)_{\al_m\al_m}    P_m, \Diag(w^k_{\al_m}) \big\ra\\
&= \sum_{m=1}^{r}  \big\la  \Diag(v_{\al_m})   , \Diag(w^k_{\al_m}) \big\ra=\la v,w^k\ra,
\end{aligned}
\end{equation*}
where the forth equality results from the   first relationship in \eqref{spc_ref}. Thus, we obtain 
\begin{eqnarray} 
\frac{\la v,w^k\ra - \big\la Y,E_k\big\ra}{\sm t_k} &=& \frac{\la v,w^k\ra - \big\la Y,UA_kU^\top\big\ra}{\sm t_k}+2 \big\la U\Diag(y)U^\top,UDU^T\big\ra
\nonumber\\
&=&2\sum_{m=1}^{r}  \big\la \Diag(y)_{\al_m\al_m},  U_{\al_m}^\top \big( UD_mU^T\big) U_{\al_m}\big\ra \nonumber\\
&=& 2\sum_{m=1}^r \big\la \Diag(y)_{\al_m\al_m},U_{\al_m}^\top  H ( \mu_{m} I  - X)^{\dagger} H U_{\al_m}   \big\ra.\label{ted_g2}
\end{eqnarray}
Take $m\in \{1,\ldots,r\}$ and $i\in \al_m$. 
Appealing now to \eqref{secondexp1}, we can conclude that 
\begin{eqnarray*} 
&&\lambda_i (X+t_kE_k) - \lambda_i (X) \\
&=& \lm_{\ell_i}\big(t_k P_m\Diag(w^k_{\al_m})P_m^\top - t_k^2(U_{\alpha_m}^\top D_mU_{\alpha_m}-U_{\alpha_m}^\top UA_kU^\top (\mu_{m}I-X)^{\dagger} UA_kU^\top U_{\alpha_m})+o(t_k^2)\big)+ O(t_k^3).
\end{eqnarray*}
As argued above, we have $UA_kU^\top\to H$, which ensures that 
$$
t_k^2\big(U_{\alpha_m}^\top D_mU_{\alpha_m}-U_{\alpha_m}^\top UA_kU^\top (\mu_{m}I-X)^{\dagger} UA_kU^\top U_{\alpha_m}\big) =o(t_k^2).
$$ 
Thus, we arrive at
\begin{equation*} 
\lambda_i (X+t_kE_k) - \lambda_i (X) = \lm_{\ell_i}\big(t_k P_m\Diag(w^k_{\al_m})P_m^\top  +o(t_k^2)\big) + O(t_k^3).
\end{equation*}
This, coupled with the Lipschitz continuity of eigenvalues, 
leads us for each $m\in \{1,\ldots,r\}$ to 
\begin{eqnarray*}
\lambda_{\al_m} (X+t_kE_k) - \lambda_{\al_m} (X) -t_k \tilde w^k_{\al_m}& = & t_k\lm(P_m\Diag(w^k_{\al_m})P_m^\top)+ o(t_k^2) -t_k \tilde w^k_{\al_m}\nonumber\\
&= & t_k \tilde w^k_{\al_m}+ o(t_k^2) -t_k \tilde w^k_{\al_m}=o(t_k^2).
\end{eqnarray*}
Thus, we arrive at 
\begin{equation} \label{taylor_1}
\lambda (X+t_kE_k) - \lambda (X) -t_k \tilde w^k=o(t_k^2).
\end{equation}
Note that we can not guarantee that $X+t_kE_k\in \dom g$.
To resolve this issue, we can utilize \cite[Proposition~2.3]{dlms} 
to obtain for any $k$ that  
\begin{equation*}
\dist ( X+t_kE_k, \dom g) = \dist ( \lambda(X+t_kE_k),\dom \theta ),
\end{equation*}
which together with \eqref{taylor_1}  and \eqref{th_lm2} brings us to the relationships
\begin{eqnarray}
{\dist}\Big(E_k, \frac{\dom g - X}{t_k}\Big)&= &\frac{1}{t_k}\, \dist \big( \lambda(X+t_kE_k), \dom\th \big)\nonumber \\
&\le &\frac{1}{t_k}\,\big \| \lambda (X) +t_k \tilde w^k+o(t_k^2) - \lambda (X) -t_k \tilde w^k\big\|\nonumber\\
&=& o(t_k)\;\mbox{ for all }\;k\in\N.\label{eq:indom}
\end{eqnarray}
Thus, for each $k$ sufficiently large, we can find a matrix $H_k\in \S^n$ such that   $X+t_kH_k\in \dom g $ and $H_k-E_k=o(t_k)$. 
Using this observation, one can argue that 
\begin{equation}\label{ted_g}
\begin{aligned}
& \Delta^2_{t_k} g\big(X,Y\big)(H_k)=  \frac{\th\big(\lm(X+t_kH_k)\big)- \th\big(\lm(X)\big)-t_k\big\la Y,H_k\big\ra}{\sm t_k^2}\\
 =& \Delta^2_{t_k} \th\big(\lm(X),v\big)( w^k)+ \frac{\la v, w^k\ra - \big\la Y,H_k\big\ra}{\sm t_k}+ \frac{\th \big(\lambda (X+t_kH_k)\big)-\th \big(\lambda (X)+t_k  w^k\big)}{\sm t_k^2}\\
=& \Delta^2_{t_k} \th\big(\lm(X),v\big)(w^k)+ \frac{\la v, w^k\ra - \big\la Y,E_k\big\ra}{\sm t_k}+ \frac{o(t_k)}{t_k}+\frac{\th \big(\lambda (X+t_kH_k)\big)-\th \big(\lambda (X)+t_k\tilde w^k\big)}{\sm t_k^2},
\end{aligned}
\end{equation}
where the last equality results from \eqref{th_lm}. Using the fact that 
$\th$ is locally Lipshcitz continuous at $\lm(X)$ with respect to its domain,
we find $\ell\ge 0$ such that for all $k$ sufficiently large we have 
\begin{eqnarray}
 |\th \big(\lambda (X+t_kH_k)\big)-\th \big(\lambda (X)+t_k\tilde w^k\big)|&\le & \ell\, \|  \lambda (X+t_kH_k) - \lambda (X)-t_k\tilde w^k\|\nonumber\\
 &\le & \ell\, \|  \lambda (X+t_kE_k) - \lambda (X)-t_k\tilde w^k\| +o(t_k^2)\nonumber\\
 &=& o(t_k^2),\label{eq:lipth}
\end{eqnarray}
where the last inequality results from $H_k-E_k=o(t_k)$ and where the last equality comes from \eqref{taylor_1}. Passing to the limit in \eqref{ted_g} and using \eqref{seq1}  and \eqref{ted_g2} lead us to 
\begin{equation}\label{eq:kinfy}
\Delta^2_{t_k} g\big(X,Y\big)(H_k) \to \d^2\th\big(\lm(X),v\big)\big(\lm'(X;H)\big)+2\sum_{m=1}^r \big\la \Diag(y)_{\al_m\al_m},U_{\al_m}^\top  H ( \mu_{m} I  - X)^{\dagger} H U_{\al_m}   \big\ra,
\end{equation}
which ensures \eqref{dedf} for the second case and consequently justifies 
twice epi-differentiability of $g$ at $X$ for $Y$. Moreover, it confirms the claimed formula for the second subderivative of $g$ at $X$ for $Y$.

Turning now to the proof of the opposite implication, assume that $g$ is twice epi-differentiable at $X$ for $Y$.   We again consider the following two cases: 1) $\d^2\theta(\lambda(X),y)(h)=\infty$;
2) $\d^2\theta(\lambda(X),y)(h)<\infty$. For the first case, its verification can use a similar argument as the proof of the first case in the previous implication.
To ensure twice epi-differentiability of $\th$ at $\lm(X)$ for $y$ in the second case, observe first via Lemma~\ref{tpd_g}(b) that 
$g$ is twice epi-differentiable at $\Lm(X)$ for $\Diag(y)$. 
Pick any $h\in\mathbb{R}^n$ and denote $H=\Diag(h)\in\S^n$. 
It follows from twice epi-differentiability of $g$ at $\Lm(X)$ for $\Diag(y)$ that for any $t_k\downarrow0$, there exists a sequence $\{H_k\}_{k\in\N}$, converging to $H$, such that 
\begin{equation*}
\Delta_{t_k}^2g(\Lambda(X),\Diag(y))(H_k)\rightarrow \d^2g(\Lambda(X),\Diag(y))(H).
\end{equation*}
Suppose  $h^{\downarrow}=(\tilde{h}_{\al_1},\ldots,\tilde{h}_{\al_r})=Qh$ with $\tilde h_{\al_m}:=Q_m h_{\al_m}\in \R^{|\al_m|}_{\dn}$, $Q=\Diag(Q_1,\dots,Q_r)$, $Q_m\in\P^{|\alpha_m|}$. Let $V^k=\Diag(V^k_1,\dots,V^k_r)$ with $V^k_m\in \O^{|\alpha_m|}((H_k)_{\alpha_m\alpha_m})$. 
It can be checked directly that
\begin{equation*}
\begin{aligned}
&g(\Lambda(X)+t_kH_k)=g(\Lambda(X)+t_k\Diag((H_k)_{\alpha_1\alpha_1},\dots,(H_k)_{\alpha_r\alpha_r})+t_k\mbox{OD}(H_k))\\
&=g(QV^k\Lambda(X)(V^k)^\top Q^\top+t_kQ\Diag(\Lambda((H_k)_{\alpha_1\alpha_1}),\dots,\Lambda((H_k)_{\alpha_r\alpha_r}))Q^\top +t_kQV^k\mbox{OD}(H_k)(V^k)^\top Q^\top)\\
&=g(\Lambda(X)+t_kQ\Diag(\Lambda((H_k)_{\alpha_1\alpha_1}),\dots,\Lambda((H_k)_{\alpha_r\alpha_r}))Q^\top +t_kQV^k\mbox{OD}(H_k)(V^k)^\top Q^\top),
\end{aligned}
\end{equation*}
where $\mbox{OD}(H_k):= H_k-\Diag((H_k)_{\alpha_1\alpha_1},\dots,(H_k)_{\alpha_r\alpha_r})$ denotes the off block diagonal part of $H_k$ and where the last equality comes from the block diagonal structures of $\Lambda(X)$, $Q$, and $V_k$. 
Let $$\widehat{H}_k:=Q\Diag(\Lambda((H_k)_{\alpha_1\alpha_1}),\dots,\Lambda((H_k)_{\alpha_r\alpha_r}))Q^\top+QV^k\mbox{OD}(H_k)(V^k)^\top Q^\top.$$ It can be checked directly that 
\begin{equation}\label{eq:p4}
\d^2g(\Lambda(X),\Diag(y))(H)=\lim_{k\rightarrow\infty}\Delta_{t_k}^2g(\Lambda(X),\Diag(y))(\widehat{H}_k)
=\lim_{k\rightarrow\infty}\Delta_{t_k}^2g(\Lambda(X),\Diag(y))(H_k). 
\end{equation}

  Let $h_k$ be such that $(h_k)_{\alpha_m}=Q_m\lambda\big((\widehat{H}_k)_{\alpha_m\alpha_m}+t_k(\widehat{H}_k)_{\alpha_m}^\top (\mu_mI-\Lambda(X))^{\dagger}(\widehat{H}_k)_{\alpha_m}\big)$ for any $m\in \{1,\ldots,r\}$. 
Since $\widehat{H}_k\rightarrow\mbox{Diag}\,(h)$, we know  for all $m=1,\dots,r$ that 
\begin{equation}\label{eq:15}
    (\widehat{H}_k)_{\alpha_m}^\top (\mu_mI-\Lambda(X))^{\dagger}(\widehat{H}_k)_{\alpha_m}\rightarrow0
\end{equation}
as $k\rightarrow\infty$, which implies that $h_k\rightarrow h$. 
It follows from the definition of $h_k$ that for $k$ sufficiently large, 
\begin{equation*}
\begin{aligned}
&\lambda(\Lambda(X)+t_k\widehat{H}_k)-\lambda(\Lambda(X)+t_k\Diag (h_k))=\lambda(\Lambda(X)+t_k\widehat{H}_k)-\lambda(\Lambda(X))-t_kQh_k. 
\end{aligned}
\end{equation*}
Moreover,  using \eqref{secondexp1} again brings us to 
\begin{equation*}
    \begin{aligned}
&\lambda_{\alpha_m}(\Lambda(X)+t_k\widehat{H}_k)-\lambda_{\alpha_m}(\Lambda(X))=\lambda_{\alpha_m}(t_k(\widehat{H}_k)_{\alpha_m\alpha_m}+t_k^2(\widehat{H}_k)_{\alpha_m}^T(\mu_mI-\Lambda(X))^{\dagger}(\widehat{H}_k)_{\alpha_m})+o(t_k^2).\end{aligned}
\end{equation*}
Combining the above two equalities, we arrive at 
\begin{equation}\label{eq:p2}\lambda(\Lambda(X)+t_k\widehat{H}_k)-\lambda(\Lambda(X)+t_k\Diag (h_k))=o(t_k^2).\end{equation}
Since $h\in \dom \d^2\theta(\lambda(X),y)$, we can use Lemma~\ref{tpd_g}(c) and  \eqref{eq:p4} to ensure   that $\Lambda(X)+t_k\widehat{H}_k\in\dom g$ for $k$ sufficiently large. 
Consequently, using \cite[Proposition~2.3]{dlms}, \eqref{eq:p2}, and  $\th$ being  symmetric, we get for any $k$ that 
$$
\begin{aligned}0=&\,\dist(\Lambda(X)+t_k\widehat{H}_k,\dom g)=\dist(\lambda(\Lambda(X)+t_k\widehat{H}_k)),\dom \theta)\\
=&\,\dist(\lambda(\Lambda(X)+t_k\Diag(h_k)),\dom \theta)+o(t_k^2)=\dist(\lambda(X)+t_kh_k,\dom \theta)+o(t_k^2). \end{aligned}$$
Thus, for each $k$ sufficiently large, we can find  $l_k\in \R^n$ such that   $\lambda(X)+t_kl_k\in \dom \theta $ and $h_k-l_k=o(t_k)$. 
Simple calculation then leads us to 
\begin{equation}\label{eq:qtheta}
\begin{aligned}
&\Delta_{t_k}^2\theta(\lambda(X),y)(l_k)=\frac{\theta(\lambda(X)+t_kl_k)-\theta(\lambda(X))-t_k\langle y,l_k\rangle}{\frac{1}{2}t_k^2}\\
=&\frac{g(\Lambda(X)+t_k\Diag (l_k))-g(\Lambda(X))-t_k\langle \Diag(y),\Diag(h_k)\rangle}{\frac{1}{2}t_k^2}+\frac{o(t_k)}{t_k}\\ 
=&\frac{g(\Lambda(X)+t_k\widehat{H}_k)-\theta(\lambda(X))-t_k\langle \Diag(y),\widehat{H}_k\rangle}{\frac{1}{2}t_k^2}+\frac{g(\Lambda(X)+t_k\Diag (l_k))-g(\Lambda(X)+t_k\widehat{H}_k)}{\frac{1}{2}t_k^2}\\
&+\frac{\langle \Diag(y),\widehat{H}_k\rangle-\langle y,h_k)\rangle}{\frac{1}{2}t_k}+o(1).
\end{aligned}
\end{equation}
Since $\theta$ is locally Lipschitz continuous
at $\lambda(X)$ with respect to its domain, we obtain  
\begin{equation*}\label{eq:p1}
\begin{aligned}
|g(\Lambda(X)+t_k\Diag (h_k))-g(\Lambda(X)+t_k\widehat{H}_k)|\leq l\|\lambda(\Lambda(X)+t_k\widehat{H}_k)-\lambda(\Lambda(X)+t_k\Diag (h_k))\|,
\end{aligned}
\end{equation*}
which together with \eqref{eq:p2} yeilds  
\begin{equation}\label{eq:p3}
g(\Lambda(X)+t_k\Diag (h_k))-g(\Lambda(X)+t_k\widehat{H}_k)=o(t_k^2).
\end{equation}
Moreover, we   have 
\begin{equation}\label{eq:p5}
\begin{aligned}
&\frac{\langle \Diag(y),\widehat{H}_k\rangle-\langle y,h_k\rangle}{\frac{1}{2}t_k}=\frac{\langle Qy,\diag(\lambda((H_k)_{\alpha_1\alpha_1}),\dots,\lambda((H_k)_{\alpha_r\alpha_r}))\rangle-\langle Qy,Qh_k\rangle}{\frac{1}{2}t_k}
\end{aligned}
\end{equation}
 Combining \eqref{eq:15} with the definition of $h_k$, we can conclude  for any  sufficiently large $k$ that 
\begin{equation*}\label{eq:p6}\begin{aligned}
&\|\lambda((H_k)_{\alpha_m\alpha_m})-\lambda\big((\widehat{H}_k)_{\alpha_m\alpha_m}+t_k(\widehat{H}_k)_{\alpha_m}^\top (\mu_mI-\Lambda(X))^{\dagger}(\widehat{H}_k)_{\alpha_m}\big)\|\\
&=\|\lambda((\widehat{H}_k)_{\alpha_m\alpha_m})-\lambda\big((\widehat{H}_k)_{\alpha_m\alpha_m}+t_k(\widehat{H}_k)_{\alpha_m}^\top (\mu_mI-\Lambda(X))^{\dagger}(\widehat{H}_k)_{\alpha_m}\big)\|\\
&\leq \|t_k(\widehat{H}_k)_{\alpha_m}^\top (\mu_mI-\Lambda(X))^{\dagger}(\widehat{H}_k)_{\alpha_m}\|=o(t_k), 
\end{aligned}\end{equation*}
which, coupled with \eqref{eq:p5}, tell us that  
\begin{equation*}\label{eq:p7}
    \frac{\langle \Diag(y),\widehat{H}_k\rangle-\langle y,h_k\rangle}{\frac{1}{2}t_k}=o(1). 
\end{equation*}
Combining this with \eqref{eq:qtheta}, \eqref{eq:p4}, and \eqref{eq:p3}  implies that 
$$\Delta_{t_k}^2\theta(\lambda(X),y)(l_k)\rightarrow \d^2g(\Lambda(X),\Diag(y))(\Diag(h)),$$
which together with   \eqref{lbss} confirms that  $\theta$ is twice epi-differentiable at $\lambda(X)$ for $y$ and hence completes the proof.
\end{proof}

Theorem~\ref{thm:ted} provides a practical characterization of twice epi-differentiability of spectral functions and can be viewed as a far-reaching extension of \cite[Theorem~3.1]{t2} in which this property was justified for some classes of eigenvalue functions. Note that Lipschitz continuity of $\th$ with respect to its domain, assumed in Theorem~\ref{thm:ted}, doesn't impose a major restriction in applications and often  holds  automatically.

\begin{Remark}\label{tep_dd}
{\rm Note   that a characterization of directionally differentiability of the proximal mapping of spectral functions can be found in \cite[Theorem~3]{dst2}. It is well-known  that if the spectral function $g$ in \eqref{spec} is lsc and convex, its proximal mapping can be calculated by
\begin{equation}\label{prox_con}
    \prox_g(X):=\mbox{argmin}_{W\in \S^n}\big\{g(W)+\sm \|W-X\|^2\big\}=U\Diag\big(\prox_\th(\lm(X))\big)U^\top,
\end{equation}
where $U\in \O^n(X)$. Given an lsc and convex spectral function $g$ with representation \eqref{spec}, we infer   from \cite[Theorem~3]{dst2}   that $\prox_g$ is directionally differentiable at $X\in \S^n$ if and only if $\prox_\th$ is directionally differentiable at $\lm(X)$.
It also follows from \cite[Exercise~13.45]{rw} that twice epi-differentiability of $g$ at $X$ for $Y\in \sub g(X)$ amounts to directional differentiability of  $\prox_g$ at $X+Y$.
One may wonder whether the combination of these two results can give us the equivalence established in Theorem~\ref{thm:ted} for convex spectral functions. To explore such a possibility, take  $(X,Y)\in \gph \sub g$
and assume that $g$ is twice epi-differentiable at $X$ for $Y$. According to 
  \cite[Exercise~13.45]{rw}, the proximal mapping $\prox_g$
is directionally differentiable at $X+Y$. Employing now \cite[Theorem~3]{dst2}
implies that $\prox_\th$ is directionally differentiable at $\lm(X+Y)$. Appealing again to  \cite[Exercise~13.45]{rw} indicates that $\th$ is twice epi-differentiable at $z$ for $\lm(X+Y)-z$ with $z=\prox_\th(\lm(X+Y))$. It is not hard to see from \eqref{prox_con} and the spectral representation of $X$ that $z=\lm(X)$. On the other hand, since $Y\in\partial g(X)$ and $g$ is convex,   there exists $U\in\O^n$ such that
$X=U\Lambda(X)U^\top$ and $Y=U\Lambda(Y)U^\top$, which indicate that 
$$X+Y=U\Lambda(X)U^\top+U\Lambda(Y)U^\top=U(\Lambda(X)+\Lambda(Y))U^\top. $$
Thus we have $\lambda(X+Y)=\lambda(X)+\lambda(Y)$. Combining these tells us that $\th$ is twice epi-differentiable at $\lm(X)$ for $\lm(Y)$. A similar argument can prove the opposite implication, which demonstrates that for convex spectral functions, Theorem~\ref{thm:ted} can be concluded from  \cite[Theorem~3]{dst2}.  
}
\end{Remark}

The formula for the second subderivative, established in Theorem~\ref{thm:ted}, deals with the domain of this object and so one may wonder what can be said about this set. Under the hypotheses of Theorem~\ref{thm:ted}, we can conclude from   \cite[Proposition~13.5]{rw} that if $\d^2 g(X,Y)$ is a proper function, then we always have the inclusion 
\begin{equation}\label{dom_ssub}
\dom \d^2 g(X,Y):=\big\{H\in \S^n|\;\d^2 g(X,Y)(H)<\infty\big\}  \subset C_g(X,Y).
\end{equation}
Note that   equality happens to be valid in this inclusion in many applications. To elaborate more on this, recall that for a function $f:\X\to \oR$, and $\ox\in \X$ with $f(\ox)$ finite and   $w\in \X$ with $\d f(\ox)(w)$ finite,  
the parabolic subderivative (see \cite[Definition~13.59]{rw}) of $f$ at $\ox$ for $w$ with respect to $z$  is defined by 
\begin{equation*}\label{lk02}
\d^2 f(\bar x)(w\verl z)= \liminf_{\substack{
   t\searrow 0 \\
  z'\to z
  }} \dfrac{f(\ox+tw+\frac{1}{2}t^2 z')-f(\ox)-t\d f(\ox)(w)}{\frac{1}{2}t^2}.
\end{equation*}
It was shown in  \cite[Proposition~3.4]{ms} that if $Y$ is a proximal subgradient in the sense of \cite[Definition~8.45]{rw} and if $\dom \d^2 g(X)(H\verl \cdot)\neq \emptyset$ for all $H\in C_g(X,Y)$,  the inclusion in \eqref{dom_ssub} becomes equality. One may ask when the condition $\dom \d^2 g(X)(H\verl \cdot)\neq \emptyset$ holds. Indeed, this condition does hold when the function $g$ is parabolically epi-differentiable at $X$
for any $H$ in the sense of \cite[Definition~13.59]{rw}. The latter condition was extensively studied recently in \cite[Theorem~4.7]{ms2} for spectral functions. Roughly speaking, it was shown that the latter property holds for the spectral function $g$ in \eqref{spec} whenever the symmetric function $\th$ in \eqref{spec} enjoys this property. The latter automatically holds for many symmetric functions that appear in applications. We record below a result that provides sufficient conditions for ensuring equality in \eqref{dom_ssub}.
\begin{Proposition}\label{prop:dom_ssub}
Suppose that $g$ has the spectral representation in \eqref{spec} and 
$Y$ is a proximal subgradient of $g$ at $X$ and that $\th$ is locally Lipschitz continuous with respect to its domain. If $\th$ is parabolically epi-differentiable at $\lm(X)$ for $\lm'(X;H)$ for any $H\in T_{\ss\dom g}(X)$, then we have $\dom \d^2 g(X,Y)=C_g(X,Y)$. 
\end{Proposition}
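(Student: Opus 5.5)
The plan is to use \cite[Proposition~3.4]{ms}. Since $Y$ is a proximal subgradient of $g$ at $X$, that result gives $\dom \d^2 g(X,Y)=C_g(X,Y)$ as soon as we know $\dom \d^2 g(X)(H\verl \cdot)\neq\emptyset$ for every $H\in C_g(X,Y)$. The inclusion $\dom \d^2 g(X,Y)\subset C_g(X,Y)$ is \eqref{dom_ssub}. If $\d^2 g(X,Y)$ happens to be improper, it takes the value $-\infty$ somewhere. Because $Y$ is a proximal subgradient, $\d^2 g(X,Y)(H)\ge -r\|H\|^2$ for some $r\ge 0$, so it is proper and this case does not occur.

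\textbf{Step 1: critical directions are tangent.} Fix $H\in C_g(X,Y)$. Then $\d g(X)(H)=\la Y,H\ra$ is finite. Since $\d g(X)(H)<\infty$, we get $H\in T_{\ss \dom g}(X)$. Thus the hypothesis of the statement applies to $H$, and $\th$ is parabolically epi-differentiable at $\lm(X)$ for $w:=\lm'(X;H)$.

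\textbf{Step 2: a parabolic direction for $\th$.} Parabolic epi-differentiability gives some $z\in\R^n$ with $\d^2\th(\lm(X))(w\verl z)<\infty$. By the definition of parabolic epi-differentiability, for any $t_k\dn 0$ there are $z_k\to z$ with $\lm(X)+t_kw+\sm t_k^2 z_k\in\dom\th$ and bounded second-order quotients.

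\textbf{Step 3: transfer to $g$ (the main obstacle).} We must build a parabolic curve $X+t_kH+\sm t_k^2 Z_k\in\dom g$ with bounded quotient. I will do this by inverting the second-order expansion \eqref{secondexp1}, in the same way $E_k$ was constructed in the proof of Theorem~\ref{thm:ted}:
\begin{itemize}
\item Choose $P_m$ diagonalizing $U_{\al_m}^\top HU_{\al_m}$, as in \eqref{spc_ref}.
\item Define $Z$ blockwise as $U\,\Diag\big(P_m\Diag(z_{\al_m})P_m^\top-2U_{\al_m}^\top H(\mu_mI-X)^\dagger HU_{\al_m}\big)U^\top$.
\end{itemize}
This choice makes $\lm(X+tH+\sm t^2Z)=\lm(X)+tw+\sm t^2 z+o(t^2)$. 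A subtlety is that the eigenvalue ordering inside each block must be compatible with $z$. I would handle this by permuting $z$ within the ties of $w$ using symmetry of $\th$, as was done with $G$ in the proof of Theorem~\ref{thm:ted}.

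\textbf{Step 4: landing in the domain.} The curve obtained in Step 3 need not lie in $\dom g$. I will correct it using \cite[Proposition~2.3]{dlms}, which gives $\dist(X+tH+\sm t^2 Z,\dom g)=\dist(\lm(\cdot),\dom\th)=o(t^2)$. This yields $Z_k\to Z$ with $X+t_kH+\sm t_k^2Z_k\in\dom g$.

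\textbf{Step 5: bounding the quotient.} Local Lipschitz continuity of $\th$ relative to its domain transfers the $o(t_k^2)$ errors into the function values. Hence the quotient defining $\d^2 g(X)(H\verl Z)$ is bounded by the finite quotient for $\th$, plus the finite difference $\d\th(\lm(X))(w)-\d g(X)(H)$. That difference is in fact zero by Theorem~\ref{specsubd} or \cite[Theorem~3.5]{ms}. Therefore $\d^2 g(X)(H\verl Z)<\infty$, so $\dom \d^2 g(X)(H\verl\cdot)\neq\emptyset$, and \cite[Proposition~3.4]{ms} concludes.

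Alternatively, Steps 3–5 may be shortcut by citing \cite[Theorem~4.7]{ms2}, provided its convexity assumption can be dropped by the same argument.
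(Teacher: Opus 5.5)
Your proposal is correct, and its skeleton matches the paper's. The paper also reduces the claim, via \cite[Proposition~3.4]{ms}, to showing $\dom \d^2 g(X)(H\verl\cdot)\neq\emptyset$ for $H\in C_g(X,Y)$, and it uses $C_g(X,Y)\subset T_{\ss\dom g}(X)$ to bring the hypothesis on $\th$ into play.

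The difference lies in the transfer step from $\th$ to $g$. The paper's proof is exactly your ``alternative shortcut'': it cites \cite[Theorem~4.7(a)]{ms2}, with no convexity caveat, to get parabolic epi-differentiability of $g$ at $X$ for every $H\in T_{\ss\dom g}(X)$, and stops there. Your Steps~3--5 build the parabolic arc by hand, with the same machinery as the proof of Theorem~\ref{thm:ted}:
\begin{itemize}
\item the correction $-2U_{\al_m}^\top H(\mu_mI-X)^\dagger HU_{\al_m}$ in $Z$ cancels the quadratic term of \eqref{secondexp1}, so block $\al_m$ becomes $P_m\Diag(tw_{\al_m}+\sm t^2 z_{\al_m})P_m^\top+O(t^3)$;
\item the ordering issue is resolved by a permutation fixing both $\lm(X)$ and $w$;
\item the distance formula from \cite{dlms} and Lipschitz continuity of $\th$ relative to its domain finish the argument.
\end{itemize}
The $o(t^2)$ in Step~4 holds along the chosen sequence $t_k$, using $z_k$ and the symmetry of $\dom\th$. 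That is all you need.

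Your route buys self-containedness and a slightly stronger statement. You only use $\dom\d^2\th(\lm(X))(\lm'(X;H)\verl\cdot)\neq\emptyset$ for critical $H$, not full parabolic epi-differentiability for all tangent $H$. You also do not depend on checking the hypotheses of \cite{ms2}. The paper's citation buys brevity and the stronger conclusion that $g$ itself is parabolically epi-differentiable.

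One point in Step~5 should be tightened. The discrepancy $\d\th(\lm(X))(w)-\d g(X)(H)$ enters the $g$-quotient multiplied by $2/t_k$, so it is not a harmless bounded term; you need it to be $\le 0$. This does not require Theorem~\ref{specsubd}, whose subdifferential-regularity hypothesis is not assumed here. The inequality $\d g(X)(H)\ge\d\th(\lm(X))(\lm'(X;H))$ follows directly from $(\lm(X+tH')-\lm(X))/t\to\lm'(X;H)$ as $t\dn 0$, $H'\to H$. Alternatively, \cite[Theorem~3.5]{ms} in the Lipschitz case gives equality, as the paper uses in Proposition~\ref{crit}.
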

 \begin{proof}
  It follows from \cite[Theorem~4.7(a)]{ms2} that $g$ is parabolically epi-differentiable at $X$ for any $H\in T_{\ss\dom g}(X)$. Since $C_g(X,Y)\subset T_{\ss\dom g}(X)$, the discussion above gives us the claimed equality via \cite[Proposition~3.4]{ms}. 
 \end{proof}

\section{Applications of Twice Epi-Differentiability}\label{sec:aptt}
This section is devoted to establishing several second-order variational properties of spectral functions as an immediate consequence of  Theorem \ref{thm:ted}. We begin by leveraging the latter result to obtain a characterization of a generalized twice differentiability of functions, recently introduced in \cite[Definition~4.1]{r25}. To present its definition, recall that a function $\ph:\X\to \oR$ is called a generalized quadratic form (GQF) on $\X$ if $\dom \ph$ is a linear subspace of $\X$ and there exists a linear symmetric operator $L$ {\rm(}i.e. $\la Lx,y\ra=\la x,Ly\ra$
for any $x,y\in \dom \ph${\rm)} from $\dom \ph$ to $\X$ such that $f$ has a representation of form 
$$
\ph(x)=\la Lx,x\ra \quad \mbox{for all}\;\; x\in \dom \ph.
$$ 
A function $f:\X\to \oR$ is called generalized twice differentiable at $x$ for $v\in \sub f(x)$ if the second subderivative $\d^2 f(x,v)$ is a GQF and twice epi-differentiable at $x$
for $v$.

\begin{Theorem}\label{gted}
    Suppose that  all the   hypothesis of Proposition~{\rm\ref{prop:dom_ssub}}  hold and that $g$
    is subdifferentially regular and $Y=U\Diag(y)U^\top$ for some $y\in \Hat \sub\th(\lm(X))$ and $U\in \O^n(X)$. Then $g$ is generalized twice differentiable at $X$ for $Y$ if and only if $\th$ has the same property at $\lm(X)$
    for $y$. 
\end{Theorem}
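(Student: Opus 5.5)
The proof splits into the twice epi-differentiability part and the GQF part. The first is immediate from Theorem~\ref{thm:ted}: its hypotheses are in force, since $\th$ is locally Lipschitz continuous relative to its domain, $Y$ is a proximal (hence regular) subgradient, so $Y\in\Hat\sub g(X)$, and $y\in\Hat\sub\th(\lm(X))$. Thus $g$ is twice epi-differentiable at $X$ for $Y$ if and only if $\th$ is twice epi-differentiable at $\lm(X)$ for $y$. By Lemma~\ref{tpd_g}(a), the latter is in turn equivalent to twice epi-differentiability of $\th$ at $\lm(X)$ for $v=Qy$. It then remains to show that, under these equivalent conditions, $\d^2 g(X,Y)$ is a GQF if and only if $\d^2\th(\lm(X),y)$ is a GQF.

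For the domains, Proposition~\ref{prop:dom_ssub} gives $\dom\d^2 g(X,Y)=C_g(X,Y)$. The same argument applied to $\th$ gives $\dom\d^2\th(\lm(X),y)=C_\th(\lm(X),y)$, using the parabolic epi-differentiability of $\th$ together with \cite[Proposition~3.4]{ms}. Here I will need to check that $y$ is a proximal subgradient of $\th$; I plan to obtain this by restricting the proximal inequality for $g$ to diagonal matrices via \eqref{spec2}. Proposition~\ref{crit}(a), applicable because $g$ is subdifferentially regular, describes $C_g(X,Y)$: it consists of those $H$ with $\lm'(X;H)\in C_\th(\lm(X),v)$ such that $\Diag(y)_{\al_m\al_m}$ and $U_{\al_m}^\top HU_{\al_m}$ admit a simultaneous ordered spectral decomposition.

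\emph{Direction ``$\Leftarrow$''.} Assume $\d^2\th(\lm(X),y)$ is a GQF with subspace domain $L$ and symmetric operator $B$. The first step is to show that $C_g(X,Y)$ is a linear subspace. The second step is to show that on it the formula \eqref{ssub_calc} is a quadratic form in $H$. The curvature term $2\sum_m\la\Diag(y)_{\al_m\al_m},U_{\al_m}^\top H(\mu_mI-X)^\dagger HU_{\al_m}\ra$ is already quadratic in $H$, so the work is in the first term. On the critical cone, simultaneous ordered diagonalization gives
\[
\lm(U_{\al_m}^\top HU_{\al_m})=P_m^\top\text{-diagonal of }U_{\al_m}^\top HU_{\al_m},
\]
with $P_m$ chosen to diagonalize $\Diag(y)_{\al_m\al_m}$. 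The idea is that the GQF structure forces $\d^2\th(\lm(X),v)(w)$ to depend only on the diagonal blocks in a way compatible with linearity. Where $y$ has distinct entries within a block $\al_m$, the subspace condition together with the symmetry \eqref{sub_sym} should force the $w_{\al_m}$-dependence to be linear in the diagonal entries of $U_{\al_m}^\top HU_{\al_m}$ taken in the eigenbasis of $\Diag(y)_{\al_m\al_m}$. Where $y$ is constant on a sub-block, the constraint should force $L$ to contain only directions that are constant there, or else the permutation invariance of the GQF on that sub-block should make the function a symmetric quadratic. In that case it is a combination of $(\sum_i w_i)^2$ and $\sum_i w_i^2$, that is, of $(\mathrm{tr}\,M)^2$ and $\|M\|^2$ for $M=U_{\al_m}^\top HU_{\al_m}$ restricted to the eigenspace of $\Diag(y)_{\al_m\al_m}$, and both of these are quadratic in $H$.

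\emph{Direction ``$\Rightarrow$''.} Assume $\d^2 g(X,Y)$ is a GQF. I would use the identities \eqref{n_ted} to pass to $(\Lm(X),\Diag(y))$ and restrict to diagonal directions $H=\Diag(h)$. For such $H$ the curvature term vanishes, because $(\mu_mI-\Lm(X))^\dagger$ kills the $\al_m$ block and $\Diag(h)$ is block diagonal. Combining \eqref{ssub_calc} with the last claim of Proposition~\ref{lbs1}, the diagonal $\Diag(h)$ satisfies
\[
\d^2 g(\Lm(X),\Diag(y))(\Diag(h))=\d^2\th(\lm(X),y)(h).
\]
For this I will need to check that $\lm'(\Lm(X);\Diag(h))=Qh$ for the ordering permutation $Q$ compatible with $v=Qy$. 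The map $h\mapsto\d^2\th(\lm(X),y)(h)$ is then the restriction of a GQF to a linear subspace $\{\Diag(h)\}$, and hence is itself a GQF.

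The main obstacle is the ``$\Leftarrow$'' direction: turning the eigenvalue-based expression $\d^2\th(\lm(X),v)(\lm'(X;H))$, which is nonlinear in $H$, into a genuine quadratic form on a linear subspace. The permutation $Q$ relating $y$ and $v$ and possible ties in $y$ inside each $\al_m$ make this delicate. My first attempt will be a blockwise analysis that uses the GQF property of $\d^2\th$, combined with the symmetry \eqref{sub_sym}, to show that the blocks with repeated $y$-values contribute only unitarily invariant quadratics.
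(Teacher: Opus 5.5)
\textbf{Gap: the ``$\Leftarrow$'' direction is not established.}

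Your ``$\Rightarrow$'' direction matches the paper's idea: restrict the GQF $\d^2 g(\Lm(X),\Diag(y))$ to diagonal directions. The paper reads off $\d^2\th(\lm(X),y)(h)=\d^2 g(\Lm(X),\Diag(y))(\Diag(h))$ from the second half of the proof of Theorem~\ref{thm:ted}, whereas you use \eqref{ssub_calc} together with Proposition~\ref{lbs1}. Your route needs two repairs. First, $\lm'(\Lm(X);\Diag(h))=Qh$ holds only for an $h$-dependent $Q\in\P^n_X$ with $Qy=v$; such a $Q$ exists by the simultaneous-ordering condition of Proposition~\ref{crit}. Second, \eqref{ssub_calc} only covers $\dom\d^2 g$, so for the remaining $h$ you need Lemma~\ref{tpd_g}(c) to get $\d^2\th(\lm(X),y)(h)=\infty$.

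The genuine gap is in ``$\Leftarrow$''. You announce as the first step that $C_g(X,Y)$ is a linear subspace, but you give no argument. It does not follow from Proposition~\ref{crit}, whose description involves the nonlinear map $H\mapsto\lm'(X;H)$ and the simultaneous-ordering constraint. The missing idea, which the paper uses, is to pass through normal cones and relative interiors:
\begin{itemize}
\item $\dom\d^2\th(\lm(X),y)=C_\th(\lm(X),y)$ is a subspace by the GQF assumption.
\item $\th$ is subdifferentially regular (inherited from $g$), so $\d\th(\lm(X))$ is the support function of $\sub\th(\lm(X))$. Hence $C_\th(\lm(X),y)=N_{\sub\th(\lm(X))}(y)$.
\item A normal cone to a convex set being a subspace means $y\in\ri\sub\th(\lm(X))$.
\item The spectral description of relative interiors of subdifferentials gives $Y\in\ri\sub g(X)$.
\item Therefore $C_g(X,Y)=N_{\sub g(X)}(Y)$ is a subspace, and it equals $\dom\d^2 g(X,Y)$ by Proposition~\ref{prop:dom_ssub}.
\end{itemize}

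The quadratic part is likewise only conjectured (``should force'', ``or else''), and your sub-block picture is not what actually happens. The fact $y\in\ri\sub\th(\lm(X))$ is what settles it:
\begin{itemize}
\item $\sub\th(\lm(X))$ is $\P^n_X$-invariant. So for $w\in C_\th(\lm(X),y)$ and $i,j\in\al_m$ with $y_i\neq y_j$, the transposed vector $\tau_{ij}y$ lies in $\sub\th(\lm(X))$. Since $C_\th$ is a subspace orthogonal to $\sub\th(\lm(X))-y$, we get $\la w,\tau_{ij}y-y\ra=0$, i.e.\ $w_i=w_j$.
\item Hence on every block where $y$ is non-constant, $w_{\al_m}$ is constant on the whole block, not merely on sub-blocks. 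So $U_{\al_m}^\top HU_{\al_m}$ is a multiple of the identity for $H\in C_g(X,Y)$.
\item On blocks where $y$ is constant, $\d^2\th(\lm(X),y)$ is invariant under all permutations of $\al_m$. By Schur's lemma it is a quadratic form in the traces $\mathrm{tr}\,M_m$ plus $\sum_m c_m\big(\|M_m\|^2-(\mathrm{tr}\,M_m)^2/|\al_m|\big)$, where $M_m=U_{\al_m}^\top HU_{\al_m}$.
\item That expression is genuinely quadratic in $H$, so together with the curvature term, \eqref{ssub_calc} is a quadratic form on $C_g(X,Y)$.
\end{itemize}

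The paper itself simply asserts quadraticity from \eqref{ssub_calc}. Your proposal, however, supplies neither the subspace argument nor this reduction.
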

\begin{proof}
    Assume first that $\th$ is generalized twice differentiable at $\lm(X)$
    for $y$. By Theorem~\ref{thm:ted}, $g$ is twice epi-differentiable at $X$
    for $Y$. Moreover, since $\d^2 \th(\lm(X), y)$ is a proper function and a GQF, it follows from \eqref{ssub_calc} that  $\d^2 g(X,Y)$ is quadratic for any $H\in \dom \d^2 g(X,Y)=C_g(X,Y)$. What remains is to show that $C_g(X,Y)$ is a linear subspace of $\S^n$.
    By assumptions and \cite[Proposition~3.4]{ms}, we have $\dom \d^2 \th(\lm(X), y)=C_\th(\lm(X),y)$ is a linear subspace of $\R^n$. Since $\th$ is subdifferentially regular, we conclude from \cite[Proposition~3.3]{hs25} that $C_\th(\lm(X),y)=N_{\sub \th(\lm(X))}(y)$. 
    Thus, we infer from a well-known fact from convex analysis that 
    $y\in \ri \sub \th(\lm(X))$.  By \cite[Proposition~3.19]{ddl}, we have 
    $Y\in \ri \sub g(X)$. This, together with the identities $\dom \d^2g(X,Y)=C_g(X,Y)=N_{\sub g(X)}(Y)$, which result from Proposition~\ref{prop:dom_ssub} and \cite[Proposition~3.3]{hs25},  tells us that $C_g(X,Y)$ is a linear subspace. 

    Assume now that $g$ is generalized twice differentiable at $X$ for $Y$.
    It follows from Theorem~\ref{thm:ted} that $\th$ is twice epi-differentiable at $\lm(X)$ for $y$. It remains to show that $\d^2\theta(\lambda(X),y)$ is a GQF. To do so, we infer from
    the second part of the proof of Theorem~\ref{thm:ted} that 
    \begin{equation}\label{ssub_th}
        \d^2\theta(\lambda(X),y)(w)=\d^2g\big(\Diag(\lm(X)),\Diag (y)\big)(\Diag(w)) 
    \end{equation}
    for any $w\in \R^n$. On the other hand, Lemma~\ref{tpd_g}(b), tells us that $g$ is twice epi-differentiable at $\Diag(\lm(X))$ for $\Diag(y)$.
    Moreover, the proof of the latter result indicates that 
    $$
    \d^2g\big(\Diag(\lm(X)),\Diag (y)\big)(H)= \d^2g\big(X,Y)(UHU^\top)
    $$
    for any $H\in \S^n$. Combining these implies that $g$ is generalized twice differentiable at $\Diag(\lm(X))$ for $\Diag(y)$. This, along with \eqref{ssub_th}, confirms that $\th$ is generalized twice differentiable at  $\lm(X)$ for $y$ and so completes the proof.
\end{proof}

\begin{Corollary}
    Suppose that $g$ has the spectral representation in \eqref{spec} and $\th$ therein is a polyhedral function. Then, if $Y\in \ri \sub g(X)$, then $g$
    is generalized twice differentiable at $X$ for $Y$.
\end{Corollary}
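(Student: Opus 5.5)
The plan is to apply Theorem~\ref{gted} and reduce everything to the polyhedral function $\th$. A polyhedral $\th$ is lsc, convex, and piecewise linear on a polyhedral domain. In particular it is locally Lipschitz continuous relative to its domain, subdifferentially regular, and $\sub\th=\Hat\sub\th$. By Proposition~\ref{subsp}, $g=\th\circ\lm$ is then convex (a symmetric convex $\th$ gives a convex spectral function by Davis/Lewis) and lsc. Hence $g$ is subdifferentially regular, and every subgradient $Y\in\sub g(X)$ is a proximal subgradient, since subgradients of convex functions are proximal. Moreover, by Proposition~\ref{subsp} there are $U\in\O^n(X)$ and $y\in\sub\th(\lm(X))=\Hat\sub\th(\lm(X))$ with $Y=U\Diag(y)U^\top$.

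Next I will verify the parabolic hypothesis of Proposition~\ref{prop:dom_ssub}. Polyhedral functions are parabolically epi-differentiable at every point of their domain for every $w$ in the domain of the subderivative. The reason is that $\th(\lm(X)+tw+\frac12t^2z)$ is exactly affine in $t$ for small $t$ along feasible parabolic paths, so the parabolic subderivative equals $\d^2\th(\lm(X))(w\verl z)=\d\th(\lm(X))(z')$-type expressions with a nonempty domain (cf. \cite[Example~13.62]{rw}). For $H\in T_{\ss\dom g}(X)$, the vector $\lm'(X;H)$ lies in $T_{\ss\dom\th}(\lm(X))=\dom\d\th(\lm(X))$, using the distance identity of \cite[Proposition~2.3]{dlms}. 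Therefore all hypotheses of Theorem~\ref{gted} hold, and it suffices to show that $\th$ is generalized twice differentiable at $\lm(X)$ for $y$.

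For the polyhedral $\th$, \cite[Proposition~13.9]{rw} shows that $\th$ is twice epi-differentiable at $\lm(X)$ for every $y\in\sub\th(\lm(X))$, with $\d^2\th(\lm(X),y)=\dd_{C_\th(\lm(X),y)}$. Since $\th$ is convex, $C_\th(\lm(X),y)=N_{\sub\th(\lm(X))}(y)$. This set is a linear subspace exactly when $y\in\ri\sub\th(\lm(X))$, and then $\d^2\th(\lm(X),y)$ is a GQF with $L=0$ on that subspace.

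The remaining step, which I expect to be the main obstacle, is passing from $Y\in\ri\sub g(X)$ to $y\in\ri\sub\th(\lm(X))$. The paper used the converse direction of \cite[Proposition~3.19]{ddl} inside Theorem~\ref{gted}, so I will invoke the equivalence stated there, namely $Y\in\ri\sub g(X)$ if and only if $y\in\ri\sub\th(\lm(X))$. If only one direction is available, I would argue directly. The set $\sub g(X)$ is the image of the convex set $\{U_0\,\mathrm{Diag}\text{-block}\}$ described in Proposition~\ref{subsp}. Suppose $y$ were on the relative boundary. Then some direction $d$ in the affine hull of $\sub\th(\lm(X))$ would have $y-\epsilon d\notin\sub\th(\lm(X))$ for all $\epsilon>0$. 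The matrix $U\Diag(d)U^\top$ lies in the affine hull of $\sub g(X)$, and $Y-\epsilon U\Diag(d)U^\top$ would fail to be a subgradient, because its eigenvalue vector, up to the block permutation in $\P^n_X$, is $y-\epsilon d$ and $\sub\th(\lm(X))$ is $\P^n_X$-invariant. This would contradict $Y\in\ri\sub g(X)$. With $y\in\ri\sub\th(\lm(X))$ in hand, Theorem~\ref{gted} yields the claim.
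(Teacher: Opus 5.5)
Your proposal is correct and follows the same route as the paper. You reduce to Theorem~\ref{gted}, take parabolic epi-differentiability of the polyhedral $\th$ from \cite{rw}, and use \cite[Proposition~13.9]{rw} to get twice epi-differentiability with $\d^2\th(\lm(X),y)=\dd_{C_\th(\lm(X),y)}$. You then pass from $Y\in\ri\sub g(X)$ to $y\in\ri\sub\th(\lm(X))$ via \cite[Proposition~3.19]{ddl}, so that $C_\th(\lm(X),y)=N_{\sub\th(\lm(X))}(y)$ is a subspace.

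You check the remaining hypotheses of Theorem~\ref{gted} more explicitly than the paper does; note that convexity of $g$ comes from Davis's theorem, not Proposition~\ref{subsp}. Your fallback relative-interior argument is sound provided you use two facts: $U\Diag(w)U^\top=U'\Diag(w')U'^\top$ with $U,U'\in\O^n(X)$ forces $w'=Pw$ for some $P\in\P^n_X$, and $\sub\th(\lm(X))$ is $\P^n_X$-invariant.
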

\begin{proof}
    It results from \cite[Exercise~13.61]{rw} that $\th$ is parabolically epi-differentiable at $\lm(X)$ for any $y\in \sub \th(\lm(X))$. Moreover, we know from \cite[Proposition~3.19]{ddl} and $Y\in \ri \sub g(X)$ that there exists  $y\in \ri\th(\lm(X))$ such that $Y=U\Diag(y)U^\top$ and $U\in \O^n(X)$. According to \cite[Proposition~13.9]{rw}, $\th$
    is twice epi-differentiable at $\lm(X)$ for $y$ and $\d^2\theta(\lambda(X),y)=\dd_{C_\th(\lm(X),y)}$. Combining this, $y\in \ri\th(\lm(X))$,  and Corollary~\ref{gted} ensures that $g$
    is generalized twice differentiable at $X$ for $Y$.
\end{proof}

We proceed with exploring twice differentiability of spectral functions. 
Note that it was proven in \cite[Theorem~3.3]{ls} that the spectral function $g$ from \eqref{spec} is twice differentiable at $X\in \S^n$ if and only if $\th$ enjoys this property at $\lm(X)$. It is tempting to ask whether our approach  can be used to derive the latter result. We show below that this can be achieved when the spectral function $g$ is prox-regular. To this end, recall that a function $f:\R^n\to \oR$ is called semidifferentiable at $\ox$ if it is finite at $\ox$ and the ``$\liminf$" in \eqref{semi1} is a ``$\lim$."
Furthermore, $f$ is said to be twice semidifferentiable at  $\ox$ if it is semidifferentiable at $\ox$ and the ``$\liminf$" in 
\begin{equation}\label{ssud}
\d^2 f(\ox)(w):=\liminf_{\substack{t\dn 0\\
w'\to w}} \frac{f(\ox+tw')-f(\ox)-t\d f(\ox)(w')}{\sm t^2},\quad w\in \X
\end{equation}
 is a ``$\lim$."
In such a case, we refer to the left-hand side of \eqref{ssud} as the second semiderivative of $f$ at $\ox$. It is known that  the twice semidifferentiability  has serious limitations to handle nonsmoothness
in variational analysis; see   \cite[p.\ 590]{rw} for a detailed discussion. 
Below, we provide a characterization of twice semidifferentiability of differentiable spectral functions.  
\begin{Theorem}\label{prop:sed}
    Suppose that $\theta$ is locally Lipschitize continuous around $\lm(X)$,  differentiable at $\lambda(X)$. Then the spectral function $g$ from \eqref{spec}  is twice semidifferentiable at $X$ if and only if $\th$ is  twice semidifferentiable at $\lambda(X)$. If one of these equivalent conditions holds, then the second semiderivative of $g$ at $X$ can be calculated for any $H\in \S^n$ by  
    $$
    \d^2g(X)(H)=\d^2\theta(\lambda(X))(\lm'(X;H))+2\sum_{m=1}^r \big\la \Diag(y)_{\al_m\al_m},U_{\al_m}^\top  H ( \mu_{m} I  - X)^{\dagger} H U_{\al_m}   \big\ra,
    $$
    where  $\al_m$, $m=1,\ldots,r$, are defined in \eqref{index}, $U\in\O^n(X)$, and $y=\nabla \th(\lm(X))$ and where  $\mu_1>\cdots>\mu_r$ are the distinct eigenvalues of $X$.
    Moreover, $\d^2g(X)(H)=\d^2g(X,\nabla g(X))(H)$ for any $H\in \S^n$. 
\end{Theorem}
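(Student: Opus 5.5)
The plan is to reduce twice semidifferentiability to twice epi-differentiability and then apply Theorem~\ref{thm:ted}. We use the standard fact (cf.\ \cite[Example~13.8, Proposition~13.5]{rw}) that for a function $f$ that is locally Lipschitz around $\ox$ and differentiable there, $\d f(\ox)(w)=\la\nabla f(\ox),w\ra$. Consequently the quotient in \eqref{ssud} coincides with $\Delta_t^2 f(\ox,\nabla f(\ox))(w')$, so $\d^2 f(\ox)=\d^2 f(\ox,\nabla f(\ox))$.

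\textbf{Step 1 (transfer of the hypotheses).} Since $\th$ is symmetric, locally Lipschitz around $\lm(X)$ and differentiable at $\lm(X)$, the results of \cite{l99} (or the formula in Proposition~\ref{subsp} with $\sub\th(\lm(X))=\{\nabla\th(\lm(X))\}$) show that $g$ is differentiable at $X$ with $\nabla g(X)=U\Diag(y)U^\top$, where $y=\nabla\th(\lm(X))$ and $U\in\O^n(X)$. The Lipschitz continuity of $g$ near $X$ follows from $|g(A)-g(B)|\le \ell\|\lm(A)-\lm(B)\|\le \ell\|A-B\|$. Because the domains are then full near the base points, relative Lipschitz continuity is trivial. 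Moreover $y\in\Hat\sub\th(\lm(X))$ and $\nabla g(X)\in\Hat\sub g(X)$, so all hypotheses of Theorem~\ref{thm:ted} hold with $Y=\nabla g(X)$.

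\textbf{Step 2 (semidifferentiability versus epi-differentiability).} For a locally Lipschitz function that is differentiable at the point, twice semidifferentiability is equivalent to the statement that the quotients $\Delta^2_t f(\ox,\nabla f(\ox))(w')$ converge as $t\dn0$ and $w'\to w$, with a finite limit. This is the main obstacle: twice epi-differentiability only yields the existence of a recovery sequence, whereas semidifferentiability requires full convergence, and the limit could a priori be $+\infty$. To close this gap, one route is \cite[Exercise~13.7 / Proposition~13.5]{rw}, which asserts that if $f$ is twice semidifferentiable then it is twice epi-differentiable, and conversely when the second subderivative is finite and the quotients are locally equi-Lipschitz. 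The remaining work is therefore to show that the quotients $\Delta_t^2$ are bounded and equi-Lipschitz in $w'$ along the relevant sequences, which comes from the local Lipschitz continuity of $\th$ together with \eqref{secondexp1}.

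Since this equivalence is delicate, a more robust approach is to argue directly, mirroring the proof of Theorem~\ref{thm:ted}. For arbitrary $t_k\dn0$ and $H_k\to H$, the decomposition in the proof of Proposition~\ref{lbs1} holds with equality, because the Fan-inequality step becomes an equality: $y$ is $\nabla\th$ at a point fixed by $\P^n_X$, so $y_{\al_m}$ is constant on each block. This gives
\[
\Delta^2_{t_k}g(X,Y)(H_k)=\Delta^2_{t_k}\th(\lm(X),y)\big(\Delta_{t_k}\lm(X)(H_k)\big)+2\sum_{m=1}^r\big\la \Diag(y)_{\al_m\al_m},U_{\al_m}^\top H_k(\mu_mI-X)^\dagger H_kU_{\al_m}\big\ra+O(t_k).
\]
Here $\Delta_{t_k}\lm(X)(H_k)\to\lm'(X;H)$, and $v=y$ since $y$ is blockwise constant. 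If $\th$ is twice semidifferentiable, the first term converges to $\d^2\th(\lm(X))(\lm'(X;H))$ along every such sequence. Hence $g$ is twice semidifferentiable, and the stated formula follows.

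\textbf{Step 3 (converse and identification).} For the converse, restrict $g$ to diagonal matrices: $\th(w)=g(\Diag(w))$ and $\nabla g(\Lm(X))=\Diag(y)$. Twice semidifferentiability of $g$ at $\Lm(X)$ follows from the same property at $X$ via the orthogonal change of variables \eqref{n_ted}. Composing with the linear map $w\mapsto\Diag(w)$ then preserves the convergence of the second-order quotients, since the first-order term $\la\Diag(y),\Diag(w')\ra=\la y,w'\ra$ matches. Finally, the identity $\d^2g(X)(H)=\d^2g(X,\nabla g(X))(H)$ is exactly the observation made at the start, because $\d g(X)(H')=\la\nabla g(X),H'\ra$.
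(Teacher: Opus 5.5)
Your direct argument is correct and is essentially the paper's proof. In the forward direction, blockwise constancy of $y=\nabla\th(\lm(X))$ turns the Fan-inequality step into an equality. The converse follows from $\th=g\circ\Diag$, after moving from $X$ to $\Diag(\lm(X))$ by an orthogonal change of variables (a step the paper leaves implicit).

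Taking $w^k:=\Delta_{t_k}\lm(X)(H_k)$ exactly, as in Proposition~\ref{lbs1}, removes the need for the paper's Lipschitz estimate of the remainder term. The detour through Theorem~\ref{thm:ted} in your Step~2 is unnecessary. Differentiability of $g$ at $X$ should be cited from Lewis's theorem \cite[Theorem~1.1]{l962}, not derived from Proposition~\ref{subsp}: at a point of mere differentiability the limiting subdifferential of $\th$ need not be a singleton, and a singleton subdifferential would not give differentiability anyway.
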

\begin{proof} Assume first that $\th$ is twice semidifferentiable at $\lambda(X)$. To prove  twice semidifferentiability of $g$ at $X$, take $H\in \S^n$ and pick  sequences  $t_k\downarrow0$ and $H_k\rightarrow H$, and observe that  
$$\begin{aligned}\Delta^2_{t_k}g(X,\nabla g(X))(H_k)&=&\frac{\th\big(\lm(X+t_kH_k)\big)- \th\big(\lm(X)\big)-t_k\big\la \nabla g(X),H_k\big\ra}{\sm t_k^2}= \Delta^2_{t_k} \th\big(\lm(X),y\big)( w^k)\\
&&+ \frac{\la y, w^k\ra - \big\la \nabla g(X),H_k\big\ra}{\sm t_k}+ \frac{\th \big(\lambda (X+t_kH_k)\big)-\th \big(\lambda (X)+t_k  w^k\big)}{\sm t_k^2},
\end{aligned}$$
where $\nabla g(X)=U^\top{\rm Diag}(y)U$, $y=\nabla\theta(\lambda(X))$, and $$(w^k)_{\alpha_m}=\lambda(U^\top_{\alpha_m}H_kU_{\alpha_m}+t_kU_{\al_m}^\top  H ( \mu_{m} I  - X)^{\dagger} H U_{\al_m})\quad \mbox{for all}\;\;m=1,\ldots,r,$$ 
with the index sets $\al_m$   taken from \eqref{index}. It can be checked directly that $w^k\rightarrow\lambda'(X,H)$. Since $\theta$ is differentiable at $\lambda(X)$, we get $\Hat \sub\th(\lm(X))=\{y\}$. Because $\theta$ is symmetric, we conclude for any permutation matrices $P$ satisfying  $\lambda(X)=P\lambda(X)$ that $Py\in \Hat \sub\th(\lm(X))$, which leads us to $Py=y$. Thus, we can assume without loss of generality that 
$y\in\R_{\downarrow}^{|\alpha_1|}\times\dots\times\R_{\downarrow}^{|\alpha_r|}$. This implies that  for any $m=1,\ldots,r$, there exists $u_m\in\R$ such that $y_{\alpha_m}=u_m{\bf 1}_{|\alpha_m|}$, where ${\bf 1}_{|\alpha_m|}$ is a vector in $\R^{|\alpha_m|}$ with all entries being $1$. Hence,  the matrices $\Diag(y_{\alpha_m})$ and $U^\top_{\alpha_m}H_kU_{\alpha_m}+t_kU_{\al_m}^\top  H ( \mu_{m} I  - X)^{\dagger} H U_{\al_m}$  are simultaneously diagonalizable for any $m=1,\ldots,r$. Appealing to Fan's inequality from \cite{Fan49} leads us to 
$$
\begin{aligned}
&~~~\frac{\la y, w^k\ra - \big\la \nabla g(X),H_k\big\ra}{\sm t_k}\\
&=\frac{\sum_{m=1}^r\la y_{\alpha_m}, \lambda(U^\top_{\alpha_m}H_kU_{\alpha_m}+t_kU_{\al_m}^\top  H ( \mu_{m} I  - X)^{\dagger} H U_{\al_m})\ra - \big\la \nabla g(X),H_k\big\ra}{\sm t_k}\\
&=\frac{\sum_{m=1}^r\la {\rm Diag}(y_{\alpha_m}), U^\top_{\alpha_m}H_kU_{\alpha_m}+t_kU_{\al_m}^\top  H ( \mu_{m} I  - X)^{\dagger} H U_{\al_m}\ra - \big\la \nabla g(X),H_k\big\ra}{\sm t_k}\\
&=\frac{\la {\rm Diag}(y),  U^\top H_kU\ra-\big\la \nabla g(X),H_k\big\ra}{\sm t_k}+2 \sum_{m=1}^r\big\la y_{\alpha_m},\lambda(U_{\al_m}^\top  H ( \mu_{m} I  - X)^{\dagger} H U_{\al_m})\big\ra
\nonumber\\
&=\frac{\la y,  \lambda'(X,H_k))\ra-\la {\rm Diag}(y),U^\top H_kU\big\ra}{\sm t_k}+2\sum_{m=1}^r \big\la \Diag(y)_{\al_m\al_m},U_{\al_m}^\top  H ( \mu_{m} I  - X)^{\dagger} H U_{\al_m}   \big\ra \nonumber\\
&= 2\sum_{m=1}^r \big\la \Diag(y)_{\al_m\al_m},U_{\al_m}^\top  H ( \mu_{m} I  - X)^{\dagger} H U_{\al_m}   \big\ra.
\end{aligned}
$$
On the other hand,   we can obtain from \eqref{secondexp1} that 
$$\lambda (X+t_kH_k)-\lambda (X)-t_k  w^k=o(t_k^2).$$
Thus, it follows  from the local Lipschitz continuity of $\theta$ around $\lm(X)$ and \cite[Theorem 1.5]{t1}, and applying a similar argument as the proof of the estimate in  \eqref{eq:lipth} that 
$$\frac{\th \big(\lambda (X+t_kH_k)\big)-\th \big(\lambda (X)+t_k  w^k\big)}{\sm t_k^2}\rightarrow0.$$
Since $\theta$ is twice semidifferentiable at $\lambda(X)$ and differentiable at $\lm(X)$, we arrive at 
\begin{equation*}
 \lim_{k\to \infty} \Delta^2_{t_k} \th\big(\lm(X),y\big)= \d^2\theta(\lambda(X))(H)+ 2\sum_{m=1}^r \big\la \Diag(y)_{\al_m\al_m},U_{\al_m}^\top  H ( \mu_{m} I  - X)^{\dagger} H U_{\al_m}   \big\ra. 
\end{equation*}
Recall that  the sequences $H_k$ and $t_k$ were taken arbitrarily. Combining these tells us that  $g$ is twice semidifferentiable at $X$ and gives us  the claimed formula for the second semiderivative of $g$ at $X$. The last claim about the equivalence  of the second subderivative and second semiderivative of $g$ follows from the fact that $g$ is differentiable and $\d g(X)(H)=\la \nabla g(X),H\ra$. 

Finally, if $g$ is twice   semidifferentiable at $X$, it follows from the composite representation of $\th$ in \eqref{spec2} and \cite[Proposition~8.2(i)]{mms2} that $\th$ is twice   semidifferentiable at $\lm(X)$.  
\end{proof}

The result above opens the door to characterize twice differentiability of spectral functions that are prox-regular.  To do so, 
we say that $f$ has a quadratic expansion at $\ox$ if $f$ is differentiable at $\ox$ and there exists a linear mapping $A:\X\to \X$ such that 
$$
f(x)=f(\ox)+\la \nabla f(\ox), x-\ox\ra+ \frac{1}{2}\la A(x-\ox),x-\ox\ra +o(\|x-\ox\|^2).
$$
Recall also that a function  $f\colon\X\to\oR$ is called prox-regular at $\ox$ for $\ov$ if $f$ is finite at $\ox$ and locally lsc  around $\ox$ with $\ov\in\sub f(\ox)$, and there exist 
constants $\ve>0$ and $r\ge 0$ such that
\begin{equation}\label{prox}
\begin{cases}
f(x')\ge f(x)+\la v,x'-x\ra-\frac{r}{2}\|x'-x\|^2\;\mbox{ for all }\; x'\in\B_{\ve}(\ox)\\
\mbox{whenever }\;(x,v)\in(\gph\sub f)\cap\B_{\ve}(\ox,\ov)\; \mbox{ with }\; f(x)<f(\ox) +\ve.
\end{cases}
\end{equation}
The function $f$ is called prox-regular at $\ox$ if it enjoys this property at $\ox$ for any $\ov\in \sub f(\ox)$. We are now in a position to provide a characterization of twice differentiability of spectral functions that are prox-regular. We should note that this was obtained in \cite[Theorem~3.3]{ls}
without assuming prox-regularity via a different approach. We, however, derive it as an immediate conclusion of our second-order variational analysis of spectral functions. It remains as an open question how to drop  prox-regularity in the following result using our approach. 

\begin{Corollary}
    Assume that $g:\S^n\to \oR$ has the spectral representation in \eqref{spec} and that  $g$ is  prox-regular at $\lm(X)$. 
    Then, $g$ is twice differentiable at $X$ if and only if $\th$ is twice differentiable at $\lm(X)$. If one of these equivalent conditions holds, the second derivative of $g$ at $X$ can be calculated for any $H\in \S^n$
    by 
    $$
    \big\la\nabla^2g(X)H,H\big\ra=\big\la \nabla^2\theta(\lambda(X))\lm'(X;H),\lm'(X;H)\big\ra+2\sum_{m=1}^r \big\la \Diag(y)_{\al_m\al_m},U_{\al_m}^\top  H ( \mu_{m} I  - X)^{\dagger} H U_{\al_m}   \big\ra,
    $$
    where  $\al_m$, $m=1,\ldots,r$, are defined in \eqref{index}, $U\in\O^n(X)$, and $y=\nabla \th(\lm(X))$ and where  $\mu_1>\cdots>\mu_r$ are the distinct eigenvalues of $X$. 
\end{Corollary}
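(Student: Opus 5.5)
The plan is to derive the corollary from Theorem~\ref{prop:sed}, using the known link between twice differentiability and twice semidifferentiability for prox-regular functions. Recall from \cite[Theorem~13.2]{rw} that a function which is differentiable at a point (with the derivative locally Lipschitz, or more generally prox-regular with single-valued subgradient) is twice differentiable there in the sense of a quadratic expansion exactly when it is twice semidifferentiable and its second semiderivative is a quadratic form. So the proof splits into two pieces: first-order, differentiability transferring between $g$ and $\th$; second-order, the quadratic expansion, handled through Theorem~\ref{prop:sed} and the formula in it.

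\textbf{Step 1 (first order).} Suppose $\th$ is twice differentiable at $\lm(X)$. Then it is differentiable near $\lm(X)$ with a gradient continuous at $\lm(X)$, hence locally Lipschitz around $\lm(X)$. By Lewis's result \cite{l99} (or \cite{ls}), $g$ is differentiable with $\nabla g(X)=U\Diag(\nabla\th(\lm(X)))U^\top$.

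Conversely, if $g$ is twice differentiable at $X$, then $\th=g\circ\Diag$ is differentiable near $\lm(X)$ by the composite representation \eqref{spec2}. Local Lipschitz continuity of $\th$ follows as well, since $\Diag$ is linear and $g$ is Lipschitz near $X$ once its gradient exists nearby and is continuous. At this point the hypotheses of Theorem~\ref{prop:sed} hold in both directions.

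\textbf{Step 2 (second order).} Suppose $\th$ is twice differentiable. Then it is twice semidifferentiable with $\d^2\th(\lm(X))(w)=\la\nabla^2\th(\lm(X))w,w\ra$. Theorem~\ref{prop:sed} gives twice semidifferentiability of $g$ together with the displayed formula. To conclude that $g$ has a quadratic expansion, I would show that the right-hand side is a quadratic form in $H$. The curvature term is clearly quadratic. The first term requires more care, because $\lm'(X;H)$ is nonlinear in $H$. Here I use symmetry: $\nabla^2\th(\lm(X))$ commutes with the permutations in $\P^n_X$, and $y$ is constant on each block $\al_m$ (as shown in the proof of Theorem~\ref{prop:sed}). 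These facts should let me rewrite the first term as $\la \nabla^2\th\,\cdot\,\mathrm{diag}(U^\top HU),\cdot\ra$ plus a trace-type quadratic on the diagonal blocks, and both pieces are quadratic in $H$.

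Prox-regularity then enters. For prox-regular functions that are differentiable at the point, twice semidifferentiability with a quadratic second semiderivative upgrades to a genuine second-order expansion, via the equivalence of twice epi-differentiability with a generalized quadratic second subderivative and proto-differentiability of $\sub g$ \cite[Theorem~13.40]{rw}. The converse direction is immediate from \cite[Proposition~8.2(i)]{mms2} applied to $\th=g\circ\Diag$, since twice differentiability passes to compositions with linear maps.

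\textbf{Main obstacle.} The delicate point is showing that $\la\nabla^2\th(\lm(X))\lm'(X;H),\lm'(X;H)\ra$ is quadratic in $H$. My first attempt will be the block-structure decomposition of $\nabla^2\th(\lm(X))$ under $\P^n_X$-invariance, namely $a_m I+b_m\mathbf{1}\mathbf{1}^\top$ on the diagonal blocks and constant off-diagonal blocks. With that structure, $\|\lm(U_{\al_m}^\top HU_{\al_m})\|^2=\|U_{\al_m}^\top HU_{\al_m}\|^2$ and $\sum\lm=\mathrm{tr}$, so the whole term becomes a quadratic form.
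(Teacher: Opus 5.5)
Your plan follows the paper's proof: Lewis's first-order result, then Theorem~\ref{prop:sed}, then prox-regularity to reach differentiability of $\nabla g$, with the converse through $\th=g\circ\Diag$. There is one substantive addition: you actually verify that $\d^2g(X)$ is a quadratic form in $H$. Your argument for this works. The invariance $P^\top\nabla^2\th(\lm(X))P=\nabla^2\th(\lm(X))$ for $P\in\P^n_X$ forces diagonal blocks $a_mI+b_m\mathbf{1}\mathbf{1}^\top$ and constant off-diagonal blocks. Then $\|\lm(U_{\al_m}^\top HU_{\al_m})\|=\|U_{\al_m}^\top HU_{\al_m}\|$ and $\mathbf{1}^\top\lm(U_{\al_m}^\top HU_{\al_m})={\rm tr}(U_{\al_m}^\top HU_{\al_m})$ make the first term quadratic in $H$. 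The paper goes straight from twice semidifferentiability to a quadratic expansion by citing \cite[Example~13.8]{rw}, which tacitly requires exactly this fact. It is not automatic even for prox-regular functions: $x\mapsto x|x|$ is $C^{1,1}$, hence prox-regular, and twice semidifferentiable at $0$ with second semiderivative $w\mapsto 2w|w|$. So your step fills a real omission in the paper's argument.

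One correction concerns where prox-regularity enters. It does not produce the quadratic expansion; that is immediate once $\d g(X)$ is linear and $\d^2g(X)$ is quadratic. What prox-regularity does is upgrade that expansion to differentiability of $\nabla g$ at $X$, via \cite[Corollary~13.42]{rw}. A quadratic expansion does not give twice differentiability in general, as $x\mapsto x^3\sin(1/x)$ at $0$ shows. Also, \cite[Theorem~13.40]{rw} by itself only yields proto-differentiability of $\sub g$ with a linear graphical derivative, so you need that corollary (or an equivalent final step). Do not identify the two notions as you do in your opening sentence.
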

\begin{proof}
    If $g$ is twice differentiable, then it follows from the composite representation $\theta$ in \eqref{spec2}
    that $\th$ is twice differentiable at $\lm(X)$. Observe that in this case  {prox-regularity} 
    of $g$ is not required. 

    Assume now that $\th$ is twice differentiable at $\lm(X)$. 
    This requires that $\th$ be differentiable around $\lm(X)$
    which implies that its gradient must be locally bounded around this point and thus it is locally Lipschitz continuous around this point. It results then from \cite[Theorem~1.1]{l962} that   $g$ is locally differentiable around $X$.  Moreover,  
    twice differentiability of $\th$ at $\lm(X)$ implies  twice semidifferentiablility of $\th$ at $\lm(X)$  and $\d^2\th\big(\lm(X) )\big)(w)=\la \nabla^2\th(\lm(X))w,w\ra$ for any $w\in \R^n$.  
By  Theorem~\ref{prop:sed}, $g$ is twice semidifferentiable at $X$. Appealing now to \cite[Example~13.8]{rw}   confirms that $g$ has a quadratic expansion at $X$. Since $g$ is  prox-regular at $X$,
we deduce from \cite[Corollary~13.42]{rw} that $\nabla g$ is differentiable, which amounts to saying that $g$ is twice differentiable at $X$.
\end{proof}

Our next result provides a characterization of subdifferential continuity for spectral functions. Recall that a function $f:\X\to \oR$ is called subdifferentially continuous at $\ox$ for $\ov\in \sub f(\ox)$ if the convergence $(x_k,v_k)\to(\ox,\ov)$ with $v_k\in\sub f(x_k)$ yields $f(x_k)\to f(\ox)$ as $k\to\infty$. When this property holds for all subgradients $v\in \sub f(\ox)$, we call $f$ subdifferentially continuous at $\ox$.
For spectral functions, it was shown in \cite[Theorem~4.2]{dlms} that the spectral function $g$ from \eqref{spec} is prox-regular at $X$ if and only if $\th$ enjoys this property at $\lm(X)$. A similar observation can be made for subdifferential continuity as shown below.

\begin{Proposition}Assume that $g:\S^n\to \oR$ has the spectral representation in \eqref{spec}.  Then, $g$ is subdifferentially continuous at $X$ if and only if $\theta$ is subdifferentially continuous at $\lambda(X)$. 
\end{Proposition}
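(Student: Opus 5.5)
The plan is to prove both implications directly from the definition of subdifferential continuity. The tools are the subdifferential formula of Proposition~\ref{subsp}, the Lipschitz continuity of the eigenvalue map $\lm$, and the composite representation $\th(x)=g(\Diag(x))$ from \eqref{spec2}. We take $g$ and $\th$ to be lsc, as in the standing assumptions of Proposition~\ref{subsp}, so that subgradients of $g$ are described through subgradients of $\th$.

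\emph{Sufficiency.} Assume $\th$ is subdifferentially continuous at $\lm(X)$. Fix $Y\in\sub g(X)$ and take $(X_k,Y_k)\to(X,Y)$ with $Y_k\in\sub g(X_k)$. By Proposition~\ref{subsp}, write $Y_k=U_k\Diag(y_k)U_k^\top$ with $U_k\in\O^n(X_k)$ and $y_k\in\sub\th(\lm(X_k))$.

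Note that $\|y_k\|=\|Y_k\|$ is bounded, and $\O^n$ is compact. So for any subsequence we can pass to a further subsequence with $U_k\to U$ and $y_k\to y$. Then $X_k=U_k\Diag(\lm(X_k))U_k^\top\to U\Diag(\lm(X))U^\top$, which gives $U\in\O^n(X)$ and $Y=U\Diag(y)U^\top$. Closedness of $\gph\sub\th$, or simply the limiting definition, gives $y\in\sub\th(\lm(X))$.

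Since $\lm(X_k)\to\lm(X)$, subdifferential continuity of $\th$ at $\lm(X)$ for $y$ yields $g(X_k)=\th(\lm(X_k))\to\th(\lm(X))=g(X)$. Every subsequence has a further subsequence along which this convergence holds, so the whole sequence satisfies $g(X_k)\to g(X)$.

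\emph{Necessity.} Assume $g$ is subdifferentially continuous at $X$. Fix $y\in\sub\th(\lm(X))$ and take $(x_k,y_k)\to(\lm(X),y)$ with $y_k\in\sub\th(x_k)$. The point $x_k$ need not be ordered, so we choose permutations $P_k$ with $P_kx_k=\lm(\Diag(x_k))$. Passing to subsequences, we may take $P_k\equiv P$ constant. Since $x_k\to\lm(X)$ and $P x_k\in\R^n_{\dn}$, the permuted limit $P\lm(X)$ is nonincreasing and hence equals $\lm(X)$, so $P$ only permutes within the blocks $\al_m$.

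By symmetry, $Py_k\in\sub\th(Px_k)$. Now take $U\in\O^n(X)$ and set
$$X_k:=U\Diag(x_k)U^\top, \qquad Y_k:=U\Diag(y_k)U^\top.$$
Let $V_k:=UP^\top$, an orthogonal matrix. Then $V_k\Diag(Px_k)V_k^\top=X_k$, so $V_k\in\O^n(X_k)$ with $\lm(X_k)=Px_k$, and $Y_k=V_k\Diag(Py_k)V_k^\top$. Proposition~\ref{subsp} therefore gives $Y_k\in\sub g(X_k)$.

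Moreover $X_k\to X$ and $Y_k\to Y:=U\Diag(y)U^\top$, and $Y\in\sub g(X)$ again by Proposition~\ref{subsp}. Subdifferential continuity of $g$ at $X$ for $Y$ gives $\th(x_k)=g(X_k)\to g(X)=\th(\lm(X))$. The subsequence argument then upgrades this to the full sequence.

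The main obstacle is the bookkeeping in the necessity direction. One must verify that the representation $Y_k=V_k\Diag(Py_k)V_k^\top$ really uses an eigenvector matrix of $X_k$ paired with the ordered eigenvalue vector. This is exactly the place where, in the nonconvex setting, unordered subgradients must be handled by the permutation trick. Everything else is routine compactness.
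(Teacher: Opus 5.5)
Your necessity direction is correct and is essentially the paper's argument done in one step. The paper first transfers subdifferential continuity from $X$ to $\Lm(X)$ by conjugation and then uses $\Diag(y_k)\in\sub g(\Diag(x_k))$. Your choice $V_k=UP^\top$ handles the ordering directly; you do not even need $P_k$ to be constant, since $V_k=UP_k^\top$ works for each $k$.

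\textbf{The gap is in the sufficiency direction}, at the sentence ``closedness of $\gph\sub\th$, or simply the limiting definition, gives $y\in\sub\th(\lm(X))$.'' The graph of the limiting subdifferential is not closed. Both the closedness property and the limiting definition only apply along $\th$-attentive sequences. So passing from $y_k\in\sub\th(\lm(X_k))$ to $y\in\sub\th(\lm(X))$ needs $\th(\lm(X_k))\to\th(\lm(X))$, which is exactly what you are trying to prove; the step is circular. The inference fails even under your hypothesis. Take $n=1$, with $\th(x)=1+x$ for $x<0$ and $\th(x)=0$ for $x\ge 0$. This function is lsc and subdifferentially continuous at $0$, and $1\in\sub\th(-1/k)$ for every $k$, yet $\sub\th(0)=(-\infty,0]$.

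What rescues your argument is that the limit is tied to the given $Y\in\sub g(X)$.
\begin{itemize}
\item By Proposition~\ref{subsp}, $Y=\tilde U\Diag(\tilde y)\tilde U^\top$ with $\tilde U\in\O^n(X)$ and $\tilde y\in\sub\th(\lm(X))$.
\item Since $U,\tilde U\in\O^n(X)$, the matrix $W:=U^\top\tilde U$ commutes with $\Lm(X)$. Hence $W=\Diag(W_1,\ldots,W_r)$ with $W_m\in\O^{|\al_m|}$.
\item Then $\Diag(y)=U^\top YU=W\Diag(\tilde y)W^\top$, so $\Diag(y_{\al_m})=W_m\Diag(\tilde y_{\al_m})W_m^\top$. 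Thus $y_{\al_m}$ is a permutation of $\tilde y_{\al_m}$ for each $m$, i.e.\ $y=Q\tilde y$ for some $Q\in\P^n_X$.
\item Symmetry of $\th$ then gives $y\in Q\sub\th(\lm(X))=\sub\th(Q\lm(X))=\sub\th(\lm(X))$.
\end{itemize}
This is precisely the role of \cite[Lemma~14]{dlms} in the paper's proof. With this inserted, the rest of your sufficiency argument goes through: compactness of $\O^n$, boundedness of $y_k$, subdifferential continuity of $\th$ at $\lm(X)$ for $y$, and the subsequence-of-subsequence upgrade.
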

\begin{proof} Suppose first that $\th$ is subdifferentially continuous at $\lambda(X)$. To show  that $g$ is subdifferentially continuous at $X$, take   $Y\in\partial g(X)$. 
If $(X_k,Y_k)\rightarrow (X,Y)$ with $Y_k\in\partial g(X_k)$, we know that there exists $y_k$ such that  $y_k\in\partial\theta(\lambda(X_k))$ and $\lambda(X_k)\rightarrow\lambda(X)$. 
It is known from \cite[Lemma 14]{dlms} that there exists a permutation matrix $P$ such that $y_k\rightarrow Py$ with $Py\in\partial \theta(\lambda(X))$. Since $\theta$ is subdifferentially continuous at $\lambda(X)$, we know that $\theta(\lambda(X_k))\rightarrow\theta(\lambda(X))$, which implies $g(X_k)\rightarrow g(X)$. Thus, $g$ is subdifferentially continuous at $X$. 
To show the opposite implication, assume that $g$ is subdifferentially continuous at $X$. We claim that $g$ is subdifferentially continuous at $\Lm(X)=\Diag(\lm(X))$. To see this, take $Y\in \sub g(\Lm(X))$ and $U\in \O^n(X)$, and assume that $(X_k,Y_k)\to (\Lm(X),  Y)$ with $Y_k\in \sub g(X_k)$. In this case, it is not hard to see that 
$$
UYU^\top\in \sub g(X)\quad \mbox{and}\quad UY_kU^\top \in  \sub g(UX_kU^\top).
$$
for any $U\in \O^n(X)$. Since $(UX_kU^\top,UY_kU^\top)\to (X,UYU^\top)$ and since $g$
is is subdifferentially continuous at $X$, we get 
$$
g(X_k)=g(UX_kU^\top)\to g(X)=g(\Lm(X)),
$$
which implies that $g$ is subdifferentially continuous at $\Lm(X)$. To show that $\th$ is  subdifferentially continuous at $\lm(X)$, take $y\in \sub \th(\lm(X))$ and $(x_k,y_k)\to (\lm(X),y)$ with $y_k\in \sub \th(x_k)$. 
It follows from \cite[Theorem~1.4]{l962} that 
$$
\Diag(y)\in\sub g(\Diag(x))\quad \mbox{and}\quad \Diag(y_k)\in \sub g(\Diag(x_k)).
$$
Since $(\Diag(x_k),\Diag(y_k))\to (\Lm(X), \Diag(y))$ and since 
$g$ is subdifferentially continuous at $\Lm(X)$, we arrive via \eqref{spec2} at 
$$
\th(x_k)=g(\Diag(x_k))\to g(\Lm(X))=\th(\lm(X)),
$$
which confirms that $\th$ is  subdifferentially continuous at $\lm(X)$ and hence ends the proof.
\end{proof}

Next, we are going to use our characterization in Theorem~\ref{thm:ted} to study proto-differentiability of subgradient mappings of spectral functions. To achieve our goal, recall that the graphical derivative of a set-valued mapping $F:\X\tto\Y$ at $\ox$ for $\oy\in F(\ox)$, denoted by $DF(\ox, \oy)$, is a set-valued mapping from $\X$ into $\Y$, defined by
 $$
 \gph DF(\ox, \oy)= T_{\ss\gph F}(\ox,\oy).
 $$ 
If, in addition, for any $\eta\in DF(\ox, \oy)(w)$ and any choice of $t_k\dn 0$,  there exist sequences $w_k \to w$ and $\eta_k \to \eta$ with $\oy+t_k\eta_k \in F(\ox +t_kw_k)$, 
 then $F$ is said to be proto-differentiable at $\ox$ for $\oy$; see  \cite[page~331]{rw} for more details. One of the main sources of proto-differentiability in variational 
 analysis is  subgradient mappings of various classes of functions.

\begin{Theorem}
Assume that $g:\S^n\to \oR$ has the spectral representation in \eqref{spec}  and that $\th$ is locally Lipschitz continuous  relative to its domain and $(X,Y)\in \gph \sub g$. Suppose that  $g$ is prox-regular and subdifferentially continuous at $X$. Then 
$\sub g$ is proto-differentiable at $X$ for $Y$ if and only if 
$\sub \th$ is proto-differentiable at $\lm(X)$ for $y$, where $y\in   \sub\th(\lm(X))$ such that $Y=U\Diag(y)U^\top$ for some   $U\in \O^n(X)$.  
\end{Theorem}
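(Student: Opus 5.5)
The plan is to route both directions through the classical link between proto-differentiability of subgradient mappings and twice epi-differentiability, namely \cite[Theorem~13.40]{rw}. That result says the following. If $f$ is prox-regular and subdifferentially continuous at $\ox$ for $\ov\in\sub f(\ox)$, then $f$ is twice epi-differentiable at $\ox$ for $\ov$ if and only if $\sub f$ is proto-differentiable at $\ox$ for $\ov$. In that case $D(\sub f)(\ox,\ov)=\sub\big(\sm\d^2 f(\ox,\ov)\big)$.

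With this, the statement reduces to Theorem~\ref{thm:ted} by the chain
\begin{equation*}
\sub g\ \mbox{proto-diff. at }(X,Y)\iff g\ \mbox{t.e.d. at }X\mbox{ for }Y\iff \th\ \mbox{t.e.d. at }\lm(X)\mbox{ for }y\iff \sub\th\ \mbox{proto-diff. at }(\lm(X),y).
\end{equation*}
Each link needs its hypotheses checked.

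\textbf{Transferring the hypotheses to $\th$.} First, prox-regularity of $g$ at $X$ is equivalent to prox-regularity of $\th$ at $\lm(X)$, by \cite[Theorem~4.2]{dlms}. Second, subdifferential continuity of $g$ at $X$ is equivalent to that of $\th$ at $\lm(X)$, by the proposition proved just before this theorem. So \cite[Theorem~13.40]{rw} applies to both $g$ at $(X,Y)$ and $\th$ at $(\lm(X),y)$; note that $y\in\sub\th(\lm(X))$ by Proposition~\ref{subsp}. This gives the outer two equivalences.

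\textbf{Meeting the regular-subgradient hypothesis of Theorem~\ref{thm:ted}.} The middle equivalence is Theorem~\ref{thm:ted}, which asks for $Y\in\Hat\sub g(X)$ and $y\in\Hat\sub\th(\lm(X))$ with $Y=U\Diag(y)U^\top$, while here we only have limiting subgradients. Prox-regularity closes this gap. Taking $x=\ox$ and $v=\ov$ in \eqref{prox} shows that $\ov$ is a proximal subgradient, hence a regular one. So prox-regularity of $g$ at $X$ for $Y$ gives $Y\in\Hat\sub g(X)$. Likewise, prox-regularity of $\th$ at $\lm(X)$, applied to the subgradient $y$, gives $y\in\Hat\sub\th(\lm(X))$. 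The Lipschitz hypothesis on $\th$ relative to its domain is assumed directly. Theorem~\ref{thm:ted} therefore applies with the given $U$ and $y$.

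\textbf{Expected obstacle.} The delicate point is the choice of $y$. Proposition~\ref{subsp} guarantees some pair $(U,y)$ representing $Y$, but the statement fixes an arbitrary such pair. I must therefore make sure that every such $y$ lies in $\sub\th(\lm(X))$. It does, by the statement's own wording, and in fact lies in $\Hat\sub\th(\lm(X))$ by the prox-regularity argument above. I also need proto-differentiability of $\sub\th$ to be insensitive to the choice among representatives. If two representatives differ by a block permutation $Q\in\P^n_X$, then Lemma~\ref{tpd_g}(a), together with $\gph\sub\th$ being invariant under $(x,z)\mapsto(Qx,Qz)$, lets me pass between them.

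Finally, I will record the graphical-derivative formula $D(\sub g)(X,Y)=\sub\big(\sm\d^2g(X,Y)\big)$, with $\d^2g(X,Y)$ given by \eqref{ssub_calc}, as a byproduct.
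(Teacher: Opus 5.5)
Your proposal is correct and follows the same route as the paper: apply \cite[Theorem~13.40]{rw} to both $g$ and $\th$, and link the two through the twice epi-differentiability equivalence of Theorem~\ref{thm:ted}, using prox-regularity to get $Y\in\Hat\sub g(X)$. You are more explicit than the paper, which applies \cite[Theorem~13.40]{rw} to $\th$ without transferring prox-regularity (via \cite[Theorem~4.2]{dlms}) and subdifferential continuity (via the preceding proposition) and without checking $y\in\Hat\sub\th(\lm(X))$; your discussion of which representative $y$ to use is harmless but unnecessary, since Theorem~\ref{thm:ted} applies directly to the fixed pair $(U,y)$.
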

\begin{proof} Our plan is to combine  Theorem~\ref{thm:ted} 
and \cite[Theorem~13.40]{rw}. To this end, recall that since $g$ is prox-regular at $X$ for $Y$, then $Y\in \Hat\sub g(X)$. Assume that $\th$ is  proto-differentiable at $\lm(X)$ for $y$. It follows from 
 \cite[Theorem~13.40]{rw} that $\th$ is twice epi-differentiable at  $\lm(X)$ for $y$. Appealing now to  Theorem~\ref{thm:ted} tells us that $g$ is twice epi-differentiable at $X$ for $Y$, which together with 
 \cite[Theorem~13.40]{rw} confirms that $g$ is proto-differentiable at $X$ for $Y$. The opposite claim can be proven similarly using 
  Theorem~\ref{thm:ted}. 
\end{proof}

We end this section with a characterization of  directional differentiability   of the proximal mapping of spectral functions. This property shows great potential in establishing local superlinear convergence of semismooth Newton method and forward backward envelop acceleration \cite{KMP23}. Recall that proximal mapping 
of a function $f:\X\to \oR$ for a parameter value  $\gamma>0$,  denoted by  $\prox_{\gamma f}$, is  defined  by 
\begin{equation*}\label{proxmap}
\prox_{\gamma f}(x)= \argmin_{w\in \X}\Big\{f(w)+\frac{1}{2\gamma}\|w-x\|^2\Big\}.
\end{equation*}
Recall from  \cite[Exercise~1.24]{rw}  that a function $f:\X\to \oR$ is called  prox-bounded if the function $f+\al\|\cdot\|^2$ is bounded from below on $\X$ for some $\al\in \R$. 

\begin{Corollary}\label{coro:prox-d-d}
Assume that $g:\S^n\to \oR$ has the spectral representation in \eqref{spec}  and that $\th$ is locally Lipschitz continuous  relative to its domain and $(X,Y)\in \gph \sub g$. Suppose that  $g$ is prox-regular and subdifferentially continuous at $X$   and that $g$ is prox-bounded. Then the following statements are equivalent.
\begin{itemize}[noitemsep,topsep=0pt]
 \item [{\rm (a)}] For any $\gamma>0$ sufficiently small, the proximal mapping  $\prox_{\gamma g}$ is directionally differentiable at $X+\gamma Y$.
 \item [{\rm (b)}] For any $\gamma>0$ sufficiently small, the proximal mapping $\prox_{\gamma \th}$  is  directionally differentiable at $\lm(X)+\gamma y$, where 
 \end{itemize} 
\end{Corollary}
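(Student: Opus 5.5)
The plan is to pass through twice epi-differentiability. On each side we use the standard equivalence, for prox-regular, subdifferentially continuous and prox-bounded functions, between directional differentiability of the proximal mapping and twice epi-differentiability of the function. Then we apply the preceding theorem on proto-differentiability, or equivalently Theorem~\ref{thm:ted}.

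\textbf{Step 1: the equivalence on each side.} For a function $f$ that is prox-regular and subdifferentially continuous at $\ox$ for $\ov$ and is prox-bounded, \cite[Exercise~13.45]{rw} (the nonconvex version, built on \cite[Theorem~13.40]{rw} and \cite[Proposition~13.37]{rw}) gives the following. For all $\gamma>0$ sufficiently small, $\prox_{\gamma f}$ is single-valued and Lipschitz near $\ox+\gamma\ov$ with $\prox_{\gamma f}(\ox+\gamma\ov)=\ox$. Moreover, $\prox_{\gamma f}$ is directionally differentiable at $\ox+\gamma\ov$ if and only if $f$ is twice epi-differentiable at $\ox$ for $\ov$.
\begin{itemize}[noitemsep,topsep=0pt]
\item For $g$, this is exactly our hypothesis, so (a) is equivalent to twice epi-differentiability of $g$ at $X$ for $Y$.
\item For $\th$, we first transfer the hypotheses from $g$. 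Prox-regularity of $g$ at $X$ gives prox-regularity of $\th$ at $\lm(X)$ by \cite[Theorem~4.2]{dlms}. Subdifferential continuity transfers by the proposition proved above.
\item Prox-boundedness of $\th$ follows from \eqref{spec2}: $\th(x)+\al\|x\|^2=g(\Diag(x))+\al\|\Diag(x)\|^2$ is bounded below whenever $g+\al\|\cdot\|^2$ is.
\end{itemize}
Hence (b) is equivalent to twice epi-differentiability of $\th$ at $\lm(X)$ for $y$.

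\textbf{Step 2: connecting the two sides.} By prox-regularity, $Y\in\Hat\sub g(X)$, and similarly $y\in\Hat\sub\th(\lm(X))$. So Theorem~\ref{thm:ted} applies, using the Lipschitz continuity of $\th$ relative to its domain. It shows that twice epi-differentiability of $g$ at $X$ for $Y$ is equivalent to that of $\th$ at $\lm(X)$ for $y$, and chaining this with Step~1 gives (a)$\iff$(b). The threshold on $\gamma$ is the smaller of the two thresholds from Step~1.

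\textbf{Main obstacle.} The delicate point is bookkeeping rather than analysis. We must verify that the prox-regularity constants of $g$ and of $\th$ both yield a common range of small $\gamma$ for which the single-valuedness and localization in \cite[Exercise~13.45]{rw} hold. We must also check that the prox-regularity of $\th$ is at $\lm(X)$ for the specific $y$ with $Y=U\Diag(y)U^\top$; this holds because prox-regularity is transferred for all subgradients at once. Optionally, one can add a sanity check via the spectral formula $\prox_{\gamma g}(Z)=V\Diag(\prox_{\gamma\th}(\lm(Z)))V^\top$ near $X+\gamma Y$ (valid locally in the prox-regular setting), but the argument above avoids needing it.
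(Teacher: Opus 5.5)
Your proposal is correct and takes the same approach as the paper: apply \cite[Exercise~13.45]{rw} on each side to reduce directional differentiability of the proximal mappings to twice epi-differentiability, then link $g$ and $\th$ through Theorem~\ref{thm:ted}. You also spell out steps the paper compresses into ``the same observation holds for $\th$'': the transfer of prox-regularity, subdifferential continuity and prox-boundedness from $g$ to $\th$, and the checks that $Y\in\Hat\sub g(X)$ and $y\in\Hat\sub\th(\lm(X))$.
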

\begin{proof}
    According to \cite[Exercise~13.45]{rw}, twice epi-differentiability of $g$ at $X$ for $Y$ amounts to directional differentiability of $\prox_{\gamma g}$ at $X+\gamma Y$ for any $\gamma>0$ sufficiently small. The same observation holds for $\th$. Since twice epi-differentiability of $g$ and $\th$ are equivalent, it follows from the discussion above that (a) and (b) are equivalent. 
\end{proof}

As mentioned in Remark~\ref{tep_dd}, when $g$ is convex, Corollary~\ref{coro:prox-d-d} can be obtained from \cite[Theorem~3]{dst2}. The above result, however, goes far beyond convexity and covers a large class of spectral functions that are not necessarily convex.

\section{Illustrated Examples}\label{secExp}

In this section, we are going to apply our second-order variational theory for several important classes of spectral functions to illustrate its efficacy
 in the study of important practical optimization problems. 

\subsection{Second-Order Variational Properties of Leading Eigenvalues}

Our first goal is to explore the second-order variational properties of the leading eigenvalue functions by using theoretical results in Section \ref{sect05}.  Recall from \eqref{ell} that  the eigenvalue $\lm_{i-\ell_i(X)+1}(X)$, ranking first in the group of eigenvalues equal to $\lm_i(X)$    is called a {\em leading}
eigenvalue of $X$. To express this class of functions in the form of the spectral functions in \eqref{spec},  define the $i$-th order statistic function $\phi_i$ by $\phi_i(x):=\lambda_i(\Diag (x))= i\mbox{-th largest element of } \{x_1,\dots,x_n\}$. It is not hard to see that all the leading eigenvalue functions have the spectral representation   
$$g(X):= \phi_i(\lambda(X)) \quad \mbox{with either} \;\;i=1\;\mbox{or} \;\; \phi_{i-1}(\lambda(X))>\phi_i(\lambda(X)) 
$$
for any $X\in \S^n$. 
 The leading eigenvalue function has important applications in various fields. For example, the largest eigenvalue is usually used in the principal component analysis for dimension reduction \cite{J11}. The Fiedler value, which is defined as the second smallest eigenvalue,  measures graph connectivity and is applied for image segmentation or community detection in social networks \cite{JLZD24}. Other applications includes neural network \cite{NM,OS}, signal processing \cite{SNFOV}; 
see \cite{PW18,AGZ10,GD14, EY17} for more applications. 
Such a representation was well  
studied in \cite[Section~9]{l99} through which some  first-order variational properties of this function were explored. Our main objective is to demonstrate that our developments in previous sections allow us to characterize second-order variational properties of the leading eigenvalues. We should add here that some of these properties were studied before in \cite{t2} using a different approach and without appealing to the spectral representation of the leading eigenvalue functions. We begin by recording some of the first- and second-order properties of the $i$-th order statistic function.

\begin{Proposition}\label{stat}
    Suppose that $x\in \R^n$ and $\phi_i$ is  the $i$-th order statistic function $\phi_i$ where either $i=1$ or $\phi_{i-1}(x)>\phi_i(x)$. Then the following properties are satisfied. 
\begin{enumerate}[noitemsep,topsep=0pt]
\item [\rm(a)] For any $z=(z_1,\ldots,z_n)$ sufficiently close to $x=(x_1,\ldots,x_n)$, we have $\phi_i(z)=\max\{z_i\,|\, i\in I(x)\}$, where  $I(x)=\big\{j\in\{1,\cdots,n\}|\, x_j=\phi_i(x)\big\}$. 
\item [\rm(b)]  $\phi_i$ is locally Lipschitz continuous and subdifferentially regular at $x$. Moreover, we have 
$\d \phi_i(x)(w)=\max\{w_i\,|\, i\in I(x)\}$ for any $w=(w_1,\ldots,w_n)$ and  $\sub \phi_i(x)={\rm conv}\,\{e_i|i\in I(x)\}$. 
\item [\rm(c)] The function $\phi_i$ is twice epi-differentiable at $x$
for any $v\in \sub \phi_i(x)$. 
\item [\rm(d)] The function $\phi_i$ is prox-regualr and subdifferentially continuous at $z$ for any $v\in \sub \phi_i(z)$. 
\end{enumerate}
\end{Proposition}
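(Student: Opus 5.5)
The whole proposition rests on the local representation in (a): near $x$, $\phi_i$ is the pointwise maximum of the finitely many linear functions $z\mapsto z_j$, $j\in I(x)$. Every other item then follows from standard facts about max-functions and piecewise linear functions in \cite{rw}.

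\textbf{Step (a).} Let $\mu:=\phi_i(x)$ and $p:=|\{j\,|\,x_j>\mu\}|$. If $i=1$, then $p=0$; if $\phi_{i-1}(x)>\phi_i(x)$, then exactly $i-1$ entries of $x$ exceed $\mu$, so again $p=i-1$. Choose $\varepsilon>0$ smaller than one third of the gap between $\mu$ and every entry $x_j\neq\mu$. For $\|z-x\|_\infty<\varepsilon$ the entries then split into three groups:
\begin{itemize}[noitemsep,topsep=0pt]
\item the $i-1$ indices with $x_j>\mu$, where $z_j>\mu+\varepsilon$;
\item the indices in $I(x)$, where $z_j\in(\mu-\varepsilon,\mu+\varepsilon)$;
\item the remaining indices, where $z_j<\mu-\varepsilon$.
\end{itemize}
Hence the $i$-th largest entry of $z$ is the largest entry among those indexed by $I(x)$, i.e. $\phi_i(z)=\max\{z_j\,|\,j\in I(x)\}$.

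\textbf{Step (b).} By (a), $\phi_i$ coincides near $x$ with a finite max of smooth (linear) functions. Therefore \cite[Example~9.3]{rw} gives local Lipschitz continuity (globally, $\phi_i$ is also 1-Lipschitz in $\|\cdot\|_\infty$). \cite[Exercise~8.31]{rw} gives subdifferential regularity together with the formula $\partial\phi_i(x)={\rm conv}\{e_j\,|\,j\in I(x)\}$. \cite[Exercise~8.31/10.27]{rw} gives the subderivative $\d\phi_i(x)(w)=\max\{w_j\,|\,j\in I(x)\}$; since all active indices have value $\mu$, the active set of the max-representation at $x$ is all of $I(x)$.

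\textbf{Step (c).} On a neighborhood of $x$, $\phi_i$ agrees with the convex piecewise linear function $\psi(z)=\max_{j\in I(x)}z_j$. Twice epi-differentiability and the second subderivative depend only on the values of the function near $x$. So it suffices to invoke \cite[Proposition~13.9]{rw} for the piecewise linear-quadratic convex function $\psi$. This also yields $\d^2\phi_i(x,v)=\dd_{C_{\phi_i}(x,v)}$.

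\textbf{Step (d).} Apply (a) at the point $z$ itself. This is the only delicate step, because the hypothesis of the proposition is formulated at $x$, and at a nearby point $z$ the leading condition $\phi_{i-1}(z)>\phi_i(z)$ may fail. Where the condition does hold at $z$, $\phi_i$ is locally convex near $z$ and finite-valued, hence prox-regular by \cite[Example~13.30]{rw}. It is also continuous, so subdifferential continuity is immediate from continuity of $\phi_i$.

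If one wants (d) without the leading hypothesis at $z$, I would first try the following. Write $\phi_i$ locally as a min-max of the coordinate functions. It is then lower-$C^2$ only if it is a max of smooth functions, which holds exactly in the leading case. So I expect the statement to be intended for $z$ satisfying the same hypothesis as $x$, or for $z$ near $x$, where the leading condition persists by the choice of $\varepsilon$ in (a). I would state that restriction explicitly and then conclude as above.
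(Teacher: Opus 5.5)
Your proposal is correct and follows essentially the paper's route. Everything is read off from the local max-of-coordinates representation in (a), combined with the standard facts in \cite{rw} for max functions (Exercise~8.31), polyhedral functions (second subderivatives) and convex functions (prox-regularity), and your explicit $\varepsilon$-argument for (a) fills in what the paper calls immediate from the definition. Your restriction of (d) to points $z$ satisfying the leading condition (in particular all $z$ near $x$) is exactly what the paper's appeal to the local polyhedral representation tacitly assumes, and it is genuinely necessary: for $n=2$, $i=2$, the function $\phi_2(z)=\min\{z_1,z_2\}$ is not prox-regular at $0$ for $v=e_1\in\partial\phi_2(0)$, because $\phi_2(t,0)=0<t-\frac{r}{2}t^2$ for all small $t>0$ and every $r\ge 0$.
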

\begin{proof}
The claim in (a) falls directly from the definition of $\phi_i$. The claims in Part (b) are immediate consequences of (a); see also \cite[Theorem~9]{l99}. The formula for  $\sub \phi_i(x)$ results from the equivalent local representation  of $\phi_i$ from (a) and the well-known formula for the pointwise maximum of convex functions; see also \cite[Proposition~6]{l99}. The formula for the subderivative of $\phi_i$ results from \cite[Exercise~8.31]{rw}. Part (c) follows from (a) and \cite[Example~13.10]{rw}. 
Finally, (d) falls out directly from the local representation of $\phi_i$ as a polyhedral function together with \cite[Example~13.30]{rw}. 
\end{proof}

Using the properties of the $i$-th order statistic function $\phi_i$, listed above, we first calculate the subderivative of these functions.
\begin{Corollary}
 Suppose that $X\in \S^n$ and that  $\mu_1>\cdots>\mu_r$ are the distinct eigenvalues of $X$. Then, for any $i\in \{1,\ldots,r\}$,  the subderiavative of the leading eigenvalue function $\lm_i$ can be calculated by 
\begin{equation}\label{subd_lead}
    \lm_i'(X,H)=\d \lm_i(X)(H)=\lm_1(U_{\al_i}^{\top} H U_{\al_i})
\end{equation} 
for any $H\in \S^n$, where the index sets $\al_i$ comes from \eqref{index}. Moreover, $\lm_i$ is subdifferentially regular at $X$
and its subdifferential at this point has a representation in the form 
\begin{equation}\label{subd_lead2}
    \sub\lm_i(X)=\big\{U_{\al_i}\Diag(y)_{\al_i\al_i}U_{\al_i}^\top\,\big|\, U\in \O^n(X),\;\; y\in \sub\phi_i(\lm(X))\big\}.
\end{equation}
\end{Corollary}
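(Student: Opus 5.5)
The plan is to write the leading eigenvalue function as the spectral function $\lm_i=\phi_i\circ\lm$ and then apply the first-order theory recorded earlier. In the statement, the index $i$ should be read as the position of a leading eigenvalue, $\lm_i(X)=\mu_m$ with $i\in\al_m$ and $i$ the first index of $\al_m$. For notational convenience we identify the block $\al_i$ in the statement with this $\al_m$.

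By Proposition~\ref{stat}(b), $\phi_i$ is lsc, symmetric, locally Lipschitz continuous and subdifferentially regular at $\lm(X)$. Since $\phi_i$ is locally Lipschitz, $\sub\phi_i(\lm(X))\neq\emptyset$. Hence Theorem~\ref{specsubd} applies and gives
$$\d\lm_i(X)(H)=\d\phi_i(\lm(X))(\lm'(X;H)).$$
Here $I(\lm(X))=\al_m$, because $\lm_{i-1}(X)>\lm_i(X)$ when $i>1$, and the indices $j$ with $\lm_j(X)=\lm_i(X)$ form exactly the block $\al_m$. The formula $\d\phi_i(x)(w)=\max\{w_j\,|\,j\in I(x)\}$ then reduces the right-hand side to the maximum of the $\al_m$-block of $\lm'(X;H)$. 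By Proposition~\ref{olemma}, this block is $\lm(U_{\al_m}^\top HU_{\al_m})$, whose largest entry is $\lm_1(U_{\al_m}^\top HU_{\al_m})$. Finally, $\lm_i$ is locally Lipschitz, and by \eqref{fexpan} (with $\ell_i=1$ for a leading index) it is directionally differentiable with $\lm_i'(X;H)=\lm_1(U_{\al_m}^\top HU_{\al_m})$. For locally Lipschitz, directionally differentiable functions the subderivative coincides with the directional derivative, which gives \eqref{subd_lead}.

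For subdifferential regularity, we transfer regularity of $\phi_i$ through the spectral representation. Lewis's results \cite{l99} state that a spectral function $\th\circ\lm$ is subdifferentially regular at $X$ exactly when $\th$ is regular at $\lm(X)$. Alternatively, by Proposition~\ref{subsp} the regular and limiting subdifferentials of $\th\circ\lm$ are both images of the corresponding subdifferentials of $\th$, and these coincide for $\phi_i$. For a locally Lipschitz function, equality of the two subdifferentials is enough to yield regularity.

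For \eqref{subd_lead2}, we apply Proposition~\ref{subsp} to get $\sub\lm_i(X)=\{U\Diag(y)U^\top\,|\,y\in\sub\phi_i(\lm(X)),\,U\in\O^n(X)\}$. By Proposition~\ref{stat}(b), every $y\in\sub\phi_i(\lm(X))=\mathrm{conv}\{e_j\,|\,j\in\al_m\}$ is supported on $\al_m$. Therefore $U\Diag(y)U^\top=U_{\al_m}\Diag(y)_{\al_m\al_m}U_{\al_m}^\top$, which is the claimed representation.

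The main obstacle I expect is the bookkeeping that identifies $I(\lm(X))$ with the block $\al_m$ containing the leading index. This step needs the hypothesis that either $i=1$ or $\lm_{i-1}(X)>\lm_i(X)$; without it, $\phi_i$ fails to be regular and the max formula breaks. The regularity transfer is the other delicate point, and I would settle it by citing \cite{l99}.
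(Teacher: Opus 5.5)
Your proposal is correct and follows essentially the same route as the paper. You write $\lm_i=\phi_i\circ\lm$, then use Theorem~\ref{specsubd} with Proposition~\ref{stat}(b) for the subderivative, Proposition~\ref{subsp} together with the support of $y\in{\rm conv}\{e_j\,|\,j\in\al_m\}$ on the block for the subdifferential formula, and \cite[Corollary~4]{l99} for regularity. Your reading of $\al_i$ as the block containing the leading index and your justification of $\lm_i'(X;H)=\d\lm_i(X)(H)$ via \eqref{fexpan} with $\ell_i=1$ and Lipschitz continuity simply make explicit points the paper leaves implicit.
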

\begin{proof}It is easy to see that for any $i\in \{1,\ldots,r\}$, the 
    leading eigenvalue function $\lm_i$ has the spectral representation 
    as \eqref{spec} with $\th$ being the $i$-th order statistic function $\phi_i$, where either $i=1$ or $\phi_{i-1}(x)>\phi_i(x)$. To justify \eqref{subd_lead}, it follows from Theorem~\ref{specsubd} that 
 \begin{eqnarray*}
    \d \lm_i(X)(H) &= & \d \phi_i(\lm(X))(\lm'(X,H))\\
    &=& \max\{\lm_s(U_{\al_i}^{\top} H U_{\al_i})\,|\, s\in \al_i\}=
    \lm_1(U_{\al_i}^{\top} H U_{\al_i}).
 \end{eqnarray*}  
 By Proposition~\ref{subsp},   the subdifferential of $\lm_i$ at $X$ has a representation of the form 
 $$
  \sub\lm_i(X)=\big\{U\Diag(y)U^\top\,\big|\, U\in \O^n(X),\;\; y\in \sub\phi_i(\lm(X))\big\}.
 $$
 By Proposition~\ref{stat}(b), $y\in \sub\phi_i(\lm(X))$ amounts to 
 $y\in \mbox{conv}\,\{e_i\,|\,i\in \al_i\}$. Thus, we get 
 $$
  Y=U\Diag(\lm(Y))U^\top=\sum_{i=1}^n \lm_i(Y)U_iU_i^\top=\sum_{s\in \al_i}  \lm_s(Y)U_sU_s^\top=U_{\al_i}\Lm(Y)_{\al_i\al_i}U_{\al_i}^\top.
 $$
 Combining these confirms \eqref{subd_lead2}.
 Finally subdifferential regualrity of $\lm_i$ at $X$ was taken from \cite[Corollary~4]{l99} along with the fact that 
 in the spectral representation of $\lm_i$, the symmetric function $\th=\phi_i$ is subdifferentially regualr according to Proposition~\ref{stat}(b). 
\end{proof}

Next, we characterize the twice epi-differentiablity of the leading eigenvalues. It is worth mentioning that although Proposition \ref{stat}(a) shows that $\phi_i$ can be written locally as a  polyhedral function, that local representation is not symmetric. Thus, existing results in \cite{ms2} are not applicable here. 
\begin{Theorem} \label{fop_lead}Suppose that $X\in \S^n$ and that  $\mu_1>\cdots>\mu_r$ are the distinct eigenvalues of $X$. Then, for any $i\in \{1,\ldots,r\}$,  the leading eigenvalue function $\lm_i$ is twice epi-differentiable at $X$ for any $Y\in \sub \lm_i(X)$ and 
$$
\d^2\lm_i(X,Y)(H)=\dd_{C_g(X,Y)}(H)+ 2\big\la Y, H ( \mu_{i}I- X)^{\dagger} H  \big\ra\quad \mbox{for all}\;\; H\in \S^n,
$$
where $U\in \O^n(X)$ such that $Y=U\Diag(y)U^\top$ for some $y\in \sub\phi_i(\lm(X))$. 
\end{Theorem}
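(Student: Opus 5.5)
The plan is to apply Theorem~\ref{thm:ted} with $g=\lm_i=\phi_i\circ\lm$ and $\th=\phi_i$, and then to simplify formula \eqref{ssub_calc}.

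First, check the hypotheses. By Proposition~\ref{stat}(b), $\phi_i$ is locally Lipschitz continuous, hence Lipschitz relative to its domain $\R^n$. By Proposition~\ref{stat}(b) and the previous corollary, both $\phi_i$ and $\lm_i$ are subdifferentially regular, so $\sub=\Hat\sub$. Thus every $Y\in\sub\lm_i(X)$ lies in $\Hat\sub\lm_i(X)$, and by Proposition~\ref{subsp} it can be written as $Y=U\Diag(y)U^\top$ with $y\in\Hat\sub\phi_i(\lm(X))$ and $U\in\O^n(X)$. Proposition~\ref{stat}(c) shows that $\phi_i$ is twice epi-differentiable at $\lm(X)$ for $y$. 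Theorem~\ref{thm:ted} then gives twice epi-differentiability of $\lm_i$ at $X$ for $Y$, together with formula \eqref{ssub_calc} on $\dom\d^2\lm_i(X,Y)$.

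Second, identify the terms of \eqref{ssub_calc}. Since $y\in{\rm conv}\{e_s\,|\,s\in\al_i\}$ by Proposition~\ref{stat}(b), the vector $y$ vanishes off the block $\al_i$. Hence the curvature sum reduces to the single term
\[
2\la\Diag(y)_{\al_i\al_i},U_{\al_i}^\top H(\mu_iI-X)^\dagger HU_{\al_i}\ra .
\]
Using $Y=U_{\al_i}\Diag(y)_{\al_i\al_i}U_{\al_i}^\top$ from \eqref{subd_lead2}, this equals $2\la Y,H(\mu_iI-X)^\dagger H\ra$.

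Third, compute $\d^2\phi_i(\lm(X),v)$. By Proposition~\ref{stat}(a), near $\lm(X)$ the function $\phi_i$ coincides with the max of the coordinates indexed by $I(\lm(X))=\al_i$, which is a polyhedral (piecewise linear) function. By \cite[Proposition~13.9]{rw}, its second subderivative for $v$ is therefore $\dd_{C_{\phi_i}(\lm(X),v)}$. Consequently the formula becomes $\dd_{K}(H)+2\la Y,H(\mu_iI-X)^\dagger H\ra$ on $\dom\d^2\lm_i(X,Y)$, where $K=\{H\,|\,\lm'(X;H)\in C_{\phi_i}(\lm(X),v)\}$.

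The remaining obstacle is to show two things: that $K$, restricted to the domain, equals $C_g(X,Y)$, and that the formula also holds, with value $\infty$, off the domain.

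\begin{itemize}[noitemsep,topsep=0pt]
\item \emph{Domain.} Apply Proposition~\ref{prop:dom_ssub} to get $\dom\d^2\lm_i(X,Y)=C_g(X,Y)$. Here $Y$ is a proximal subgradient because $\phi_i$, and hence $\lm_i$ (via \cite[Theorem~4.2]{dlms}), is prox-regular by Proposition~\ref{stat}(d). Also, $\phi_i$ is locally polyhedral, so it is parabolically epi-differentiable by \cite[Exercise~13.61]{rw} applied to the local representation.
\item \emph{Value on the domain.} For $H\in C_g(X,Y)$, Proposition~\ref{crit}(a) gives $\lm'(X;H)\in C_{\phi_i}(\lm(X),v)$, so the indicator term vanishes and the formula holds.
\item \emph{Value off the domain.} For $H\notin C_g(X,Y)$, we have $\d^2\lm_i(X,Y)(H)=\infty$, which again matches the stated formula.
\end{itemize}

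The delicate point is justifying parabolic epi-differentiability and prox-regularity through the local, non-symmetric polyhedral representation. I would handle this by noting that both properties are local, so the local representation suffices.
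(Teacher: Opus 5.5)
Your proposal is correct and follows essentially the paper's route. You apply Theorem~\ref{thm:ted} with $\th=\phi_i$ and obtain $\dom\d^2\lm_i(X,Y)=C_{\lm_i}(X,Y)$ from Proposition~\ref{prop:dom_ssub}; you then use the local polyhedral representation with \cite[Proposition~13.9]{rw} to make the $\d^2\phi_i$ term an indicator that vanishes on that domain, and collapse the curvature sum to the single $\al_i$ block. The only deviations are minor: you remove the indicator via Proposition~\ref{crit}(a) rather than via finiteness of the lower estimate \eqref{lbss}, and you explicitly verify the proximal-subgradient and parabolic epi-differentiability hypotheses (through prox-regularity and \cite[Exercise~13.61]{rw} on the local representation), which the paper leaves implicit or routes through \cite[Theorem~4.7(a)]{ms2}.
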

\begin{proof} It is easy to see that for any $i\in \{1,\ldots,r\}$, the 
    leading eigenvalue function $\lm_i$ has the spectral representation 
    as \eqref{spec} with $\th$ being the $i$-th order statistic function $\phi_i$,  where either $i=1$ or $\phi_{i-1}(x)>\phi_i(x)$. It follows  Proposition~\ref{stat}(b) that $\lm_i$ is locally Lipscitz continuous and subdifferentially regular at $X$.
    Appealing now to Proposition~\ref{stat}(c) and Theorem \ref{thm:ted} confirms that $\lm_i$ is twice epi-differentiable at $X$ for $Y$.
    To justify the claimed formula for the second subderivative of $\lm_i$, we proceed with two Claims.
    
\noindent{\it Claim 1:} We have $\dom \d^2\lm_i(X,Y)=C_{\lm_i}(X,Y)$.

To justify this claim, we can conclude from   Proposition~\ref{stat}(a) and \cite[Theorem~4.7(a)]{ms2} that $\lm_i$ is parabolically epi-differentiable at $X$ for any $H\in C_{\lm_i}(X,Y)$ in the sense of \cite[Definition~13.59]{rw}. This, coupled with Proposition~\ref{prop:dom_ssub},, ensures the claim about the domain of 
    $\d^2\lm_i(X,Y)$. 
    
\noindent{\it Claim 2:} The second subderivative of $\lm_i$ at $X$ for $Y$ has a representation of the form 
$$
\d^2g(X,Y)(H)=\dd_{C_g(X,Y)}(H)+2 \sum_{s=1}^r\big\la \Lm(Y)_{\al_s\al_s}, U_{\al_s}^\top H ( \mu_{s}I- X)^{\dagger} H  U_{\al_s}\big\ra
$$
for any $H\in \S^n$.

To verify this claim, it follows from $Y\in \sub \lm_i(X)$ and Proposition~\ref{fop_lead} that $Y=U_{\al_i}\Diag(y)_{\al_i\al_i}U_{\al_i}^\top$ for some $y\in \sub\phi_i(\lm(X))$. Recall from Claim 1 that  $\dom \d^2\lm_i(X,Y)=C_{\lm_i}(X,Y)$. Take $H\in \dom \d^2g(X,Y)$ and conclude that $\d^2g(X,Y)(H)<\infty$. Using the formula for the second suderivative from Theorem~\ref{thm:ted} then leads us to $\lm'(X;H)\in \dom \d^2\th\big(\lm(X),y\big)$. On the other hand, we know from Proposition~\ref{stat}(a) that $\d^2\th\big(\lm(X),y\big)=\d^2h\big(\lm(X),y\big)=\dd_{C_{\th}(\lm(X),y)}$, where $h(z):=\max\{z_i\,|\, i\in I(\lm(X))\}$ for any $z=(z_1,\ldots,z_n)$, and where the last equality results from $h$ being a polyhedral function together with \cite[Proposition~13.9]{rw}. Therefore,  we get  $\d^2\th\big(\lm(X),y\big)\big(\lm'(X;H)\big)=0$. 
combining this with the formula for the second suderivative from Theorem~\ref{thm:ted} confirms that for any $H\in \dom \d^2g(X,Y)$, we
have 
$$
\d^2g(X,Y)(H)=2 \sum_{s=1}^r\big\la \Lm(Y)_{\al_s\al_s}, U_{\al_s}^\top H ( \mu_{s}I- X)^{\dagger} H  U_{\al_s}\big\ra.
$$
If $H\notin \dom \d^2g(X,Y)$, we conclude from the equality 
$\dom \d^2\lm_i(X,Y)=C_{\lm_i}(X,Y)$ that $\d^2\lm_i(X,Y)(H)=\infty$, which proves the claimed formula for $\d^2\lm_i(X,Y)$ in this claim. 

To finish the proof, recall that  $Y=U_{\al_i}\Diag(y)_{\al_i\al_i}U_{\al_i}^\top$ for some $y\in \sub\phi_i(\lm(X))$. This  implies that $\Lm(Y)_{\al_s\al_s}=0$ for all $s\in \{1,\ldots,r\}\setminus \{\al_i\}$.  Combining these and Claim 2,
    we arrive at 
 \begin{eqnarray*}
 \d^2g(X,Y)(H)&=& \dd_{C_g(X,Y)}(H) +2 \big\la \Lm(Y)_{\al_i\al_i}, U_{\al_i}^\top H ( \mu_{i}I- X)^{\dagger} H  U_{\al_i}\big\ra\nonumber\\
 &=& \dd_{C_g(X,Y)}(H)+ 2\big\la Y, H ( \mu_{i}I- X)^{\dagger} H  \big\ra 
\end{eqnarray*}
for any $H\in \S^n$, which completes the proof. 
\end{proof}

It is worth  mentioning here  that twice epi-differentiability of the leading eigenvalue functions was studied before in \cite[Theorem 3.2]{t2}. Here, we obtain it via a different approach, which is much simpler than deriving it from the definition directly.  

\subsection{Second-Order Variational Properties of Statistic Regularizer}
We continue to study second-order variational properties of some important regularization terms, used often in statistics to analyze  eigenvalue optimization problems such as the largest eigenvalue gaps. This again  demonstrates the great potential of our established theory  in practical  applications. 
\begin{Example}[largest eigenvalue gap] {\rm Suppose that $X\in \S^n$. The largest eigenvalue gap of
$X$ is defined by 
$$
g(X)=\max\big\{\lambda_k(X)-\lambda_{k+1}(X)\big |\; k=1,\ldots,n\big\}.
$$
 The largest eigenvalue gap of the Laplacian matrix is frequently used in statistics to choose    the number $k$ of clusters in spectral clustering; see \cite{msx} and \cite[Section 8.3]{Von} for more details.  It is also widely used in quantum mechanics to determine the mixing time of quantum walks \cite{TOVPV}. 
 It is easy to see that $g$ has the spectral representation in \eqref{spec} with 
  $$
  \theta(x):=\max\big\{\langle x,e^{i,i+1}\ra|\; i=1,\ldots,n-1\},
  $$ 
  where $e^{i,i+1}\in\mathbb{R}^n$ is the vector whose $i$-th component is 1, $i+1$-th component is -1 and the remaining components are 0. Note that $\theta$ is the fussed graphical Lasso regularization term and its proximal mapping was  computed in \cite{ZZST}.  Since    $\theta$ is a  polyhedral function, it follows from \cite[Proposition 13.9]{rw} that such $\theta$ is twice epi-diﬀerentiable at any $x\in \R^n$ for any $y\in \sub \th(x)$. Using Theorem \ref{thm:ted}, we conclude  that $g$ is also twice epi-diﬀerentiable at $X$ for any $Y\in\partial g(X)$. 
It is also worth mentioning that our argument above can be applied to any polyhedral function  $\theta$. This tells us a similar conclusion can be reached about twice epi-differentiability of a wide range of functions appearing in different optimization problems such as  positive semidefinite programming problems  and the fastest mixing Markov chain problem \cite{BDXiao}. In such a case, according to Corollary \ref{coro:prox-d-d},  $\prox_g$ is directionally differentiable, which allows us to apply fast algorithms like the generalized Newton method and forward-backward envelop acceleration \cite{KMP23} to solve the optimization problem with such a regularizer $g$. 
}
\end{Example}

\begin{Example}[Minimax Concave Penalty (MCP)] {\rm The MCP function was introduced   in \cite[equation (2.1)]{Zhang} for the sparse regression. A frequently used MCP function (cf. \cite{Zhang, LTY}) has the   form $$\phi(t)=\begin{cases}
        c|t|-\frac{t^2}{2a} & \mbox{if}\;|t|\leq a c,\\
        \frac{a c^2}{2} & \mbox{if}\;|t|>a c,
    \end{cases}$$
    where $a>1$ and $c>0$ are two given  constants and $t\in \R$. 
    It is usually regarded as a smoothing function for the $l_0$-norm \cite{GJM, Zhang}. It can also be composed with an eigenvalue function to form a spectral function in applications. For instance,  in the robust PCA (cf. \cite{LTY}), it is common to consider the spectral function $g(X)=\sum_{i=1}^n\phi(\lambda_i(X))$.  Indeed, all the penalty functions in \cite[Table 1]{LTY} can be used in a similar way to construct a spectral function to deal with the  low rank optimization problems. Define the function $\th:\R^n\to \R$ by 
    $$
    \th(x_1,\ldots,x_n)=\phi(x_1)+\ldots+\phi(x_n).
    $$
    Clearly, $\th$ is symmetric and $g=\th\circ\lm$, which tells us that $g$ is a spectral function.
     It is easy to see that $\theta$ is differentiable at any $x=(x_1,\ldots,x_n)\in \R^n$ when $x_i\neq 0$ for all $i=1,\ldots,n$. At $x=0$, $\th$ enjoys the following properties:  
    \begin{itemize}
    \item $\theta$ is Lipschitz continuous around $0$ and $\partial\phi(0)=[-c,c]$. This, along with \cite[Propostion~10.5]{rw}, shows that 
    $\sub \th(0)=[-c,c]^n$.
        \item Since $\phi$ can be equivalently expressed as $\phi(t)=\max\{ct-\frac{t^2}{2a}, -ct-\frac{t^2}{2a}\}$ locally around $0$, it  is fully amenable at $0\in\R$ in the sense of  \cite[Definition 10.23]{rw}. According to \cite[Example~10.26]{rw}, $\th$ enjoys this property at $0\in \R^n$ as well and therefore  is prox-regular and subdifferential continuous at $0 $. 
        
        \item By  \cite[Example 7.28]{rw},   $\phi$ is subdifferentially regular at $0$. It follows then from \cite[Proposition~10.5]{rw} that 
        $\th$ is subdifferentially regular at $0\in\R^n$.
        Employing now \cite[Corollary~4]{l99}, we can conclude  that $g$ is also subdifferentially regular at $0\in \S^n$. 
        \item It follows from \cite[Corollary 13.15]{rw} that $\theta$ is twice epi-differentiable at $0\in\R$ for any $v\in \sub \phi(0)$ and 
        $$\d^2\phi(0,v)(w)=\delta_{C_{\theta}(0,v)}(w)-\frac{w^2}{a},$$
        where the last equality comes from \cite[Example 13.16]{rw}. Note that the critical cone $C_{\phi}(0,v)$ can be efficiently calculated using the representation $C_{\phi}(0,v)=N_{\sub \phi(0)}(v)$. 
        Because of the definition of $\th$, the latter tells us that $\th$ is twice epi-differentiable at $0\in \R^n$ for any $y=(y_1,\ldots,y_n)\in \sub \th(0)$ and 
        $$
        \d^2\theta(0,y)(w)=\d^2\phi(0,y_1)(w_1)+\ldots+ \d^2\phi(0,y_n)(w_n),
        $$
        for any $w=(w_1,\ldots,w_n)\in\R^n$.
    \end{itemize}
Combining these properties and Theorem \ref{thm:ted} tells us  that the spectral function $g$ is  twice epi-differentiable
at $X:=0$ for any $Y\in \sub g(X)$. A similar argument can be made for any $x=(x_1,\ldots,x_n)\in \R^n$ when one of $x_i=0$. 
Note that  for any $ x\in\R$, it is easy to see that for sufficiently small $\gamma$ that $${\rm prox}_{\gamma\phi}(x)=\begin{cases}
    0, & x\leq \gamma c,\\
   {\rm sgn}(x)\cdot\frac{a}{a-\gamma}(|x|-\gamma c) & \gamma c< x\leq a c,\\
   x & x>a c.
\end{cases}$$
It can be checked directly that ${\rm prox}_{\gamma\theta}(x)=({\rm prox}_{\gamma\phi}(x_1),\dots,{\rm prox}_{\gamma\phi}(x_n))$. 
Since $\phi$ is prox-bounded, 
we derive from Corollary \ref{coro:prox-d-d} that ${\rm prox}_{\gamma g}$ is directionally differentiable, which can be used to design efficient methods
for solving eigenvalue optimization problems using the generalized Newton method in \cite{KMP23}.}
\end{Example}

\section{Conclusion}

In this paper, we explore the second-order variational properties of a spectral function whose symmetric part is not necessarily convex. We prove    twice epi-differentiability of  spectral functions amounts to that of their symmetric parts.   By applying this theoretical equivalence as a tool, we   characterize several variational properties of  spectral functions inclduing generalized twice differentiability, twice semidifferentiability,  proto-differentiability, and directional differentiability of their proximal mappings.  As an immediate consequence, we  obtain   twice epi-differentiability of the leading eigenvalue functions and the largest eigenvalue gap. The established theory can be leveraged to solve practical optimization problems using the generalized Newton method or forward-backward envelop acceleration method, which is a subject for our future research.
It is  very likely that our second-order analysis in this paper can be extended to singular value spectral functions and it is a subject of our ongoing research.


\end{document}